\newtheorem{thm}{Theorem}[section]
\newtheorem{lem}[thm]{Lemma}
\newtheorem{corol}[thm]{Corollary}
\newtheorem{prop}[thm]{Proposition}
\newtheorem{main}{Theorem}
\newtheorem*{thm*}{Theorem}
\newtheorem*{corol*}{Corollary}
\newtheorem*{cnj*}{Conjecture}
\theoremstyle{definition}
\newtheorem{rmk}[thm]{Remark}
\newtheorem{eg}[thm]{Example}
\newtheorem{step}{Step}
\newcommand{\cA}{\mathscr{A}}
\newcommand{\cB}{\mathscr{B}}
\newcommand{\cH}{\mathscr{H}}
\newcommand{\cGG}{\mathscr{G}}
\newcommand{\cE}{\mathscr{E}}
\newcommand{\cU}{\mathscr{U}}
\newcommand{\cR}{\mathscr{R}}
\newcommand{\cK}{\mathscr{K}}
\newcommand{\cC}{\mathscr{C}}
\newcommand{\cL}{\mathscr{L}}
\newcommand{\cF}{\mathscr{F}}
\newcommand{\cG}{\mathfrak{g}}
\newcommand{\cV}{\mathscr{V}}
\newcommand{\cW}{\mathscr{W}}
\newcommand{\cT}{\mathscr{T}}
\newcommand{\cN}{\mathscr{N}}
\newcommand{\cS}{\mathfrak{s}}
\newcommand{\cP}{\mathscr{P}}
\newcommand{\cO}{\mathscr{O}}
\newcommand{\RHom}{\mathrm{RHom}}
\DeclareMathOperator{\rk}{rk}
\DeclareMathOperator{\id}{id}
\DeclareMathOperator{\Ext}{Ext}
\DeclareMathOperator{\Hom}{Hom}
\DeclareMathOperator{\ext}{ext}
\DeclareMathOperator{\HH}{H}
\DeclareMathOperator{\hh}{h}
\DeclareMathOperator{\rL}{L}
\DeclareMathOperator{\rR}{R}
\DeclareMathOperator{\codim}{codim}
\DeclareMathOperator{\fC}{{\mathfrak c}}
\DeclareMathOperator{\fB}{{\mathfrak b}}
\DeclareMathOperator{\fZ}{{\mathfrak z}}
\DeclareMathOperator{\fJ}{{\mathscr J}}
\DeclareMathOperator{\fF}{{\mathfrak f}}
\DeclareMathOperator{\fH}{{\mathfrak h}}
\DeclareMathOperator{\fU}{{\mathfrak u}}
\newcommand{\Z}{\mathds Z}
\newcommand{\FF}{\mathds F}
\newcommand{\N}{\mathds N}
\newcommand{\PP}{\mathds P}
\newcommand{\PD}{\check{\mathds P}}
\DeclareMathOperator{\ts}{\otimes}
\newcommand{\epi}{\twoheadrightarrow}
\newcommand{\xr}{\xrightarrow}
\newcommand{\bk}{{\mathds{k}}}
\newcommand{\bP}{\mathds{P}^1}
\newcommand{\bp}{\boxplus}
\newcommand{\sx}{{\boldsymbol x}}
\newcommand{\sy}{{\boldsymbol y}}
\newcommand{\sa}{{\boldsymbol u}}
\newcommand{\bD}{\mathbf{D}}
\newcommand{\bC}{\mathbf{C}}
\newcommand{\bB}{\mathbf{B}}
\newcommand{\bR}{\mathbf{R}}
\newcommand{\coker}{\mathrm{coker}}
\newcommand{\depth}{\mathrm{depth}}
\newcommand{\notA}{B}
\newcommand{\notB}{A}
\newcommand{\vk}{{\vec k}}
\newcommand{\aaa}{a} \newcommand{\bb}{b} \newcommand{\cc}{c} \newcommand{\dd}{d}
\newcommand{\fbK}{{\boldsymbol{\mathfrak K}}}
\numberwithin{equation}{section}
\begin{document}


\title{Surfaces of minimal degree of tame representation type and mutations of Cohen-Macaulay modules}

\author{Daniele Faenzi}
\email{\tt daniele.faenzi@u-bourgogne.fr}
\address{Université de Bourgogne Franche-Comté\\
  9 Avenue Alain Savary - BP 47870 - 21078 Dijon Cedex - France}

\author{Francesco Malaspina}
\email{{\tt francesco.malaspina@polito.it}}
\address{Politecnico di Torino \\
  Corso Duca degli Abruzzi 24 - 10129 Torino - Italy}

\keywords{MCM modules, ACM bundles, Ulrich bundles,
  tame CM type, varieties of minimal degree}
\subjclass[2010]{14F05; 13C14; 14J60; 16G60}


\thanks{D. F. partially supported by GEOLMI ANR-11-BS03-0011. F. M. partially supported
  by GRIFGA and PRIN 2010/11
{\it Geometria delle varietà algebriche}, cofinanced by MIUR}

\begin{abstract}
We provide two examples of smooth projective surfaces of tame CM type,
by showing that the parameter space of isomorphism classes of
indecomposable ACM bundles with fixed rank and determinant on a
rational quartic scroll in $\PP^5$ is either a single point or a
projective line. 
These turn out to be  the only smooth projective varieties of tame CM
type besides elliptic curves, \cite{atiyah:elliptic}.

For surfaces of minimal degree and wild CM type, we classify rigid Ulrich bundles as Fibonacci extensions.
For $\FF_0$ and $\FF_1$, embedded as quintic or sextic scrolls, a 
complete classification of rigid ACM bundles is given.

\end{abstract}

\maketitle

\section*{Introduction}

Let $X \subset \PP^n$ be a smooth positive-dimensional closed subvariety
over an algebraically closed field $\bk$, and
assume that the graded coordinate ring $\bk[X]$ of $X$ is Cohen-Macaulay, i.e. $X$
is ACM (arithmetically Cohen-Macaulay).
Then $X$ supports infinitely many indecomposable ACM sheaves $\cE$
(i.e. whose $\bk[X]$-module of global sections $\HH^0_*(X,\cE)$ is Cohen-Macaulay), unless 
$X$ is $\PP^n$ itself, or a quadric hypersurface, or a rational normal
curve, or one of the two sporadic cases:
the Veronese surface in $\PP^5$ and (cf. \S
\ref{hirzebruch}) the rational cubic scroll $S(1,2)$ in
$\PP^4$, see \cite{eisenbud-herzog:CM}.

Actually, for most ACM varieties $X$, much more is true. Namely $X$ supports families of arbitrarily
large dimension of indecomposable ACM bundles, all non-isomorphic to
one another (varieties like this are of ``geometrically of wild CM type''
or simply ``CM-wild'').
CM-wild varieties include curves of genus $\ge 2$, hypersurfaces of degree $d \ge 4$ in $\PP^n$ with $n \ge 2$,
complete intersections in $\PP^n$ of codimension 
$\ge 3$, having one defining polynomial of degree  $\ge 3$
(cf. \cite{crabbe-leuschke,drozd-tovpyha:wild}), 
the third Veronese embedding of any variety of dimension $\ge 2$ cf. \cite{miro_roig:veronese-PAMS}.
In many cases, these
families are provided by
Ulrich bundles, i.e. those $\cE$ 
such that $\HH^0_*(X,\cE)$ achieves the maximum 
number of generators,
namely $ d_X\rk(\cE)$, where we write $d_X$ for the degree of $X$.
For instance, Segre embeddings are treated in \cite{costa-miro_roig-pons_llopis},
smooth rational ACM surfaces in $\PP^4$ in \cite{miro_roig-pons_llopis:surfaces_P4},
cubic surfaces and threefolds in \cite{casanellas-hartshorne:cubic-JEMS,casanellas-hartshorne-geiss-schreyer}, 
del Pezzo surfaces in \cite{pons_llopis-tonini,coskun-kulkarni-mustopa:ulrich}.

In spite of this, there is a special class of varieties $X$ with
intermediate behaviour, namely $X$ supports continuous families of
indecomposable ACM bundles, all non-isomorphic to one another, but, for each rank $r$, these bundles form
finitely (or countably) many irreducible families of dimension at most one.
Then $X$ is called of tame CM type. It is the case of the elliptic curve, \cite{atiyah:elliptic}.

In this note we provide the first examples of smooth
positive-dimensional projective CM-tame varieties, besides elliptic curves. Part of
this was
announced in \cite{faenzi-malaspina:CRAS}.

\begin{main} \label{main-tame}
Let $X$ be 
a smooth  surface of degree $4$ in $\PP^5$. Then, for any $r \ge 1$, there is a
family of isomorphism classes of indecomposable Ulrich bundles of rank
$2r$, parametrized by $\PP^1$. Conversely, any indecomposable
ACM bundle on $X$ is rigid or belongs to one of these families (up to a twist).
In particular, $X$ is of tame CM type. 
\end{main}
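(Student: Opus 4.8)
The plan is to classify the indecomposable ACM bundles on $X$ by matching them with the indecomposable representations of the Kronecker quiver $K_2$ (two vertices, two parallel arrows), the archetype of a tame algebra. Write $\pi\colon X\to\PP^1$ for the ruling by lines, with fibre class $f$ and hyperplane class $\xi=\mathcal{O}_X(1)$, so that $\xi^2=4$, $\xi\cdot f=1$, $f^2=0$. The first step is to single out two distinguished Ulrich bundles and compute their homological algebra. For the split scroll $S(2,2)$ one checks by Künneth that $A=\mathcal{O}_X(\xi-f)$ and $B=\mathcal{O}_X(3f)$ are the only Ulrich line bundles, and that $\End(A\oplus B)=\bk\times\bk$, $\Ext^1(B,A)=\bk^2$, while $\Hom(A,B)=\Hom(B,A)=\Ext^1(A,B)=\Ext^2(A\oplus B,A\oplus B)=0$. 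This is exactly the path algebra of $K_2$, so $A,B$ play the role of the two simple representations; since they have rank $1$, an $A$--$B$-filtered bundle with $p$ copies of $A$ and $q$ copies of $B$ has rank $p+q$.

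The second step constructs the families. The regular representations of $K_2$ form homogeneous tubes indexed by $\PP^1$, the base of the tube at $\lambda$ being the regular simple of dimension vector $(1,1)$. On $X$ this base is a non-split extension
$0\to A\to\cE^{(1)}_\lambda\to B\to 0$, and the parameter space of such extensions is $\PP(\Ext^1(B,A))=\PP^1$; each $\cE^{(1)}_\lambda$ is an indecomposable rank-$2$ Ulrich bundle, and distinct $\lambda$ give non-isomorphic bundles because $\Hom(A,B)=\Hom(B,A)=0$. Climbing the tube by iterated self-extensions $0\to\cE^{(r-1)}_\lambda\to\cE^{(r)}_\lambda\to\cE^{(1)}_\lambda\to0$ yields, for every $r\ge 1$, an indecomposable Ulrich bundle of rank $2r$ with fixed determinant $A^{\otimes r}\otimes B^{\otimes r}$, giving the asserted $\PP^1$-family. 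Because the Ulrich property is closed under extensions, all of these are Ulrich, and the numerical invariants are constant along the family.

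The third step is completeness. The key lemma is that \emph{every} Ulrich bundle is filtered by $A$ and $B$: using the ruling one shows that an Ulrich $\cE$ restricts to $\mathcal{O}_{\PP^1}(1)^{\oplus p}\oplus\mathcal{O}_{\PP^1}^{\oplus q}$ on a general fibre, and a Riemann--Roch computation (with the vanishing $\Ext^2(A,\cE)=\Ext^2(\cE,B)=0$) forces $\Hom(A,\cE)\neq 0$ or $\Hom(\cE,B)\neq 0$, producing a sub-$A$ or quotient-$B$ with Ulrich complement; induction on the rank then yields the filtration. Consequently the category of Ulrich bundles is equivalent to $\mathrm{rep}(K_2)$, and one reads off the classification: preprojective and preinjective modules, of dimension $(n,n+1)$ and $(n+1,n)$, give the \emph{rigid} Ulrich bundles, of odd rank $2n+1$, while the regular modules give precisely the tubes above. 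Twisting by $\mathcal{O}_X(k)$ extends this from the Ulrich slice to all ACM bundles: every indecomposable ACM bundle is, up to a twist, either rigid (odd rank) or a member of a rank-$2r$ tube (even rank), so $X$ is of tame CM type.

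The main obstacle is the filtration lemma of the third step, i.e.\ essential surjectivity. For an arbitrary bundle the pushforward data along $\pi$ could be wild and the gluing a hopeless matrix problem; it is the ACM hypothesis, through the vanishing of intermediate cohomology $\HH^1(\cE(k))=0$ for all $k$, that pins the generic splitting type on the ruling to a single balanced type and collapses the relative Beilinson spectral sequence to the two-term Kronecker datum $\bk^q\rightrightarrows\bk^p$. Two further points require care: bookkeeping the twists when passing from Ulrich bundles to all ACM bundles, and treating $S(1,3)=\FF_2$, where one computes that there is only a \emph{single} Ulrich line bundle $\mathcal{O}_X(C_0+2f)$, so the second Kronecker generator must be produced by hand as a rigid rank-one ACM bundle rather than read off as a second Ulrich line bundle; the $\Ext$-numerics, and hence the $K_2$-classification, are nonetheless identical.
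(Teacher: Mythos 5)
Your Ulrich-side analysis follows essentially the same route as the paper: the paper realizes Ulrich bundles, up to twist, as extensions $0 \to \cO_X(-F)^a \to \cE \to \cL^b \to 0$ with $\ext^1_X(\cL,\cO_X(-F))=2$, and classifies the resulting Kronecker ($w=2$) representations by Kronecker--Weierstrass pencils, the Jordan blocks $\fJ_{a,\sa}$ giving exactly your homogeneous tubes of rank $2r$ parametrized by $\PP^1$, and the blocks $\fC_u,\fB_v$ your rigid odd-rank objects. One factual slip: $S(1,3)$ has \emph{two} Ulrich line bundles, $\cO_X(H-F)=\cO_X(C_0+2f)$ and $\cO_X(3F)$, exactly as $S(2,2)$ does (indeed $\hh^0(X,\cO_X(3F))=\hh^0(\PP^1,\cO_{\PP^1}(3))=4=d_X$ and ACM-ness follows from $\bR\pi_*$), so no second Kronecker generator needs to be produced by hand; the genuine difference between the two scrolls lies elsewhere.

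The genuine gap is your last step, the passage from Ulrich bundles to all ACM bundles. Twisting does not reduce an arbitrary indecomposable ACM bundle to an Ulrich one: the Ulrich property is defined via the initialized twist and is therefore twist-invariant, and there exist indecomposable ACM bundles that are Ulrich for no twist --- the non-Ulrich line bundles $\cO_X, \cO_X(F), \cO_X(2F)$, and on $S(1,3)$ also the rank-two bundles $\cV$ (the nonsplit extension of $\cL$ by $\cO_X$) and $\cW$ (of $\cL$ by $\cO_X(H-F)$). These are precisely the ``rigid'' objects the theorem allows, and proving that every non-Ulrich indecomposable ACM bundle is of this form is where the paper spends most of its effort. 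Relatedly, your claim that the ACM hypothesis alone ``collapses the relative Beilinson spectral sequence to the two-term Kronecker datum'' is incorrect: for an ACM but non-Ulrich bundle the Orlov--Beilinson decomposition yields a four-term datum, $0 \to \cO_X^{\cc}\oplus\cO_X(-F)^{\aaa} \to \cE \to \cL^{\bb}\oplus\cL(-F)^{\dd} \to 0$ (Proposition \ref{prop-abcd}), and it is only the Ulrich normalization $\cc=\dd=0$ that reduces it to a Kronecker representation. The missing work is the analysis of this larger datum: one must show, via the splitting criterion and the monad display (Lemmas \ref{s'annullas}, \ref{soloO}, Proposition \ref{criterions}, and for $S(1,3)$ the chain of reductions in \S\ref{S13} handling $\cV$, $\cV(-F)$, $\cW$), that indecomposability forces either the Ulrich case or one of the sporadic rigid bundles. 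Without this, the converse half of the statement --- and hence tameness --- is unproven.
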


Recalling the classification by del Pezzo and
Bertini  of smooth varieties of {\it minimal degree},
i.e. with $d_X=\codim(X)+1$,
as the Veronese surface in $\PP^5$ and rational normal scrolls
(cf. \cite{eisenbud-harris:centennial}), we see two things. On one hand, 
both CM-finite varieties and our examples have minimal degree,
actually a surface of degree $4$ in $\PP^5$ is a quartic
scroll. Incidentally, these have the same graded
Betti numbers as the Veronese surface in $\PP^5$, which is CM-finite.
On the
other hand the remaining varieties of minimal degree are CM-wild
 by \cite{miro-roig:scrolls}; in fact in
\cite{miro-roig:scrolls} also quartic scrolls are claimed to be of
wild CM type: the gap in the argument
only overlooks our two examples, cf. Remark \ref{occhioallerrore}.
By the following result, these examples complete the list of
non-CM-wild varieties in a broad sense.

\begin{thm*}[cf. \cite{faenzi-pons:arxiv}]
  Let $X \subset \PP^n$ be a closed normal ACM subvariety
  of positive dimension, which is not a cone. Then $X$ is CM-wild if it
  is not one of the 
  well-known CM-finite
  varieties, or an elliptic curve, or a quartic surface scroll.
\end{thm*}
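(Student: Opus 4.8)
The plan is to prove the stated trichotomy by reducing, through general linear sections, to the classification of ACM curves and surfaces, and then to settle the surface case according to the arithmetic genus of a generic curve section. The ingredients feeding in are the del Pezzo--Bertini list of varieties of minimal degree \cite{eisenbud-harris:centennial}, the Eisenbud--Herzog classification of CM-finite varieties \cite{eisenbud-herzog:CM}, the wildness results quoted in the introduction, and Theorem \ref{main-tame}.

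First I would establish the reduction principle: if $X \subset \PP^n$ is ACM of dimension $\ge 2$ and $Y = X \cap H$ is a general hyperplane section, then $Y$ is again normal, ACM and not a cone, has the \emph{same} degree and codimension as $X$ (so minimality of the degree is preserved), and, crucially, $X$ is CM-wild whenever $Y$ is. The mechanism is that an Ulrich sheaf on $X$ restricts to an Ulrich sheaf on $Y$, while a wild family of Ulrich modules over $\bk[Y] = \bk[X]/(h)$ lifts to a wild family over $\bk[X]$: Ulrich modules are cut out by matrices of linear forms, and both these presentations and the morphisms between them that realise a wild subcategory lift along the surjection $\bk[X] \epi \bk[Y]$. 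Iterating down to a surface, any $X$ of dimension $\ge 3$ that is not $\PP^n$ or a quadric (the only CM-finite varieties in dimension $\ge 3$) is thus reduced to a surface section of the same degree and codimension.

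Next I would dispose of the curve and surface cases. A smooth ACM curve is of finite CM type when it is a rational normal curve ($g=0$), is CM-tame when it is elliptic ($g=1$, by \cite{atiyah:elliptic}), and is CM-wild as soon as $g \ge 2$. For a smooth ACM surface $X$ I would then split according to the genus $g$ of a general curve section $C$. If $g = 0$ then $C$ is a rational normal curve, so $X$ has minimal degree and is, by del Pezzo--Bertini, the Veronese $V_2^2$ or a rational normal scroll; hence $X$ is CM-finite ($V_2^2$ or the cubic scroll $S(1,2)$), or else it is CM-wild by \cite{miro-roig:scrolls}, the sole exception being the quartic surface scroll, which is CM-tame by Theorem \ref{main-tame} (precisely the case overlooked in \cite{miro-roig:scrolls}, cf.\ Remark \ref{occhioallerrore}). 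If $g = 1$ then adjunction together with the ACM hypothesis identifies $X$ with a del Pezzo surface in its anticanonical embedding, which is CM-wild by \cite{pons_llopis-tonini,coskun-kulkarni-mustopa:ulrich}. Finally, if $g \ge 2$ then $C$ is CM-wild, so $X$ is CM-wild by the reduction principle. Combined with the dimension reduction, this surface trichotomy closes the higher-dimensional case as well, the $g=1$ stratum there being the del Pezzo (Fano) varieties, again CM-wild.

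The main obstacle is twofold. Technically, the delicate point is to make the lifting step of the reduction principle rigorous -- transporting an entire wild family, i.e.\ a representation embedding of a wild category, from $\bk[Y]$ to $\bk[X]$ while controlling indecomposability and non-isomorphism -- and to check that normality and the non-cone hypothesis genuinely survive passage to a general section. Conceptually, the heart of the matter is the borderline between tame and wild among surfaces of minimal degree: everything hinges on Theorem \ref{main-tame}, which isolates the quartic scroll as the unique tame surface and thereby repairs the gap in \cite{miro-roig:scrolls}, so that the $g = 0$ stratum contributes exactly one tame case beyond the CM-finite list.
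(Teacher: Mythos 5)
This theorem is not proved in the paper at all: it is quoted from \cite{faenzi-pons:arxiv} to put Theorems A and B in context, and the only ingredient the present paper contributes to it is the tameness of the two quartic scrolls (Theorem \ref{main-tame}). So there is no internal proof to compare against, and your sketch has to stand on its own; it does not.

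The genuine gap is your ``reduction principle'', i.e.\ the claim that CM-wildness ascends from a general hyperplane section $Y=X\cap H$ to $X$ because Ulrich modules over $\bk[Y]$ are presented by matrices of linear forms and ``both these presentations and the morphisms between them lift along the surjection $\bk[X]\epi\bk[Y]$''. This is precisely the classical lifting problem for modules along a hyperplane section, and it is obstructed. If $N$ is an MCM (or Ulrich) $\bk[Y]$-module presented by a linear matrix $A$, and $\tilde A$ is any lift of $A$ to $\bk[X]$, then indeed $\coker(\tilde A)\otimes\bk[Y]\simeq N$, but nothing guarantees that the linear form $h$ cutting out $Y$ is a nonzerodivisor on $\coker(\tilde A)$, nor that $\coker(\tilde A)$ is maximal Cohen--Macaulay, let alone Ulrich, over $\bk[X]$; the obstruction to the existence of a genuine lift lies in $\Ext^2_{\bk[Y]}(N,N)$, and there are well-known examples (going back to Hochster's counterexample to Grothendieck's lifting problem) of MCM modules over a hyperplane section that admit no lift at all. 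Even where individual modules do lift, wildness requires transporting an entire representation embedding --- families together with the (non-)morphisms certifying indecomposability and pairwise non-isomorphism --- and your sketch says nothing about why that structure would lift. Note that the direction which does work easily is the opposite one: if $M$ is Ulrich over $\bk[X]$ then $M/hM$ is Ulrich over $\bk[Y]$, which moves families down from $X$ to $Y$, not up. Accordingly, the argument of \cite{faenzi-pons:arxiv} (like the wildness arguments quoted in this paper, cf.\ Remark \ref{occhioallerrore} and \cite{miro-roig:scrolls}) does not lift module categories across hyperplane sections; it exhibits wild families directly on $X$, via extensions of pairs of ACM sheaves with sufficiently large $\Ext^1$ between them.

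Two secondary points. First, the theorem concerns normal, possibly singular varieties, while your surface trichotomy (adjunction in the $g=1$ case, identification with smooth del Pezzo surfaces, the quoted wildness references) is argued as if $X$ were smooth; the singular cases are exactly where extra work is needed. Second, even in your own scheme the $g\ge 2$ curve case relies on the hereafter-unproved reduction principle again, so the gap propagates to every stratum except minimal degree.
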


For CM-wild varieties, an interesting issue is to study rigid ACM
bundles (by definition $\cE$ is rigid if $\Ext^1_X(\cE,\cE)=0$).
Important instances of classification of such bundles
are given in \cite{iyama-yoshino,keller-murfet-van_den_bergh},
cf. also \cite{faenzi:iyama-yoshino}, for the third Veronese surface
and second Veronese threefold, both embedded in $\PP^9$.

Our first result in this direction deals with Ulrich bundles.
Given a rational surface scroll
$X$, we set $H$ for the hyperplane class, $F$ for the
class of a fibre of the projection $X \to \PP^1$, and $\cL=\cO_X((d_X-1)F-H)$.
For $w\ge 2$ we define the Fibonacci numbers
by the relations $\phi_{w,0}=0$, $\phi_{w,1}=1$ and 
$\phi_{w,k+1}=w \phi_{w,k}-\phi_{w,k-1}$. We set $\phi_{w,-1}=0$.

\begin{main} \label{main-wild}
Let $X$ be a smooth surface scroll of degree $d_X \ge
5$. Set $w=d_X-2$. Then:
\begin{enumerate}[(i)]
\item \label{parte_estensioni}  an indecomposable ACM bundle $\cE$ on
  $X$ is Ulrich if and only if,  up to twist, $\cE$ fits into:
 \[
 0 \to \cO_X(-F)^{a} \to \cE \to \cL^{b} \to 0, \qquad \mbox{for some
   $a,b \ge 0$;}
 \]

\item \label{primaparte}  if moreover $\cE$ is rigid then
  $a=\phi_{w,k}$ and
 $b=\phi_{w,k\pm 1}$, for some $k\ge 0$;
\item \label{secondaparte} for any $k \in \Z$, there is a unique
  indecomposable rigid Ulrich bundle $\fU_k$
   with:
  \begin{align*}
  & a=\phi_{w,k}, && b=\phi_{w,k-1}, && \mbox{for $k\ge 1$},\\
  & a=\phi_{w,-k}, && b=\phi_{w,1-k}, && \mbox{for $k\le 0$}.
  \end{align*}
  Furthermore, each $\fU_k$ is a exceptional, and $\fU_{k}(-H) \simeq \fU_{1-k}^* \ts
  \omega_X$.
\end{enumerate}
\end{main}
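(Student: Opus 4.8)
The plan is to identify the category of Ulrich bundles on $X$ with the finite–dimensional representations of the generalized Kronecker quiver $\K_w$ having $w=d_X-2$ arrows. The first task is to pin down the two building blocks $\cO_X(-F)$ and $\cL$. Using the ruling $\pi\colon X\to\PP^1$, the projection formula, and $\pi_*\cO_X(H)=\cO(a_1)\oplus\cO(a_2)$ with $a_1+a_2=d_X$, I would check that both bundles are exceptional, that $\Ext^\bullet(\cO_X(-F),\cL)=0$, and — the crucial computation — that
\[
\Ext^1(\cL,\cO_X(-F))\cong\HH^1\!\bigl(\PP^1,\cO(-a_1)\oplus\cO(-a_2)\bigr)\cong\bk^{\,w},
\]
the remaining $\Ext^i(\cL,\cO_X(-F))$ vanishing. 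Thus $\cL$ and $\cO_X(-F)$ play the role of the two simple representations of $\K_w$, the $w$ arrows being exactly the classes in $\Ext^1(\cL,\cO_X(-F))$.

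For part (i) the implication ``$\Leftarrow$'' is routine: the Ulrich condition is closed under extensions and survives the common twist by $H$ under which $\cO_X(-F)$ and $\cL$ themselves become Ulrich, so any $\cE$ sitting in the displayed sequence is Ulrich. For ``$\Rightarrow$'' I would feed $\cE$ into the relative Beilinson monad attached to the $\PP^1$–bundle $\pi$: the vanishings $\HH^\bullet(\cE(-H))=\HH^\bullet(\cE(-2H))=0$ built into the Ulrich property collapse the monad to a single two–term complex whose terms are direct sums of $\cO_X(-F)$ and $\cL$ (up to the overall twist). Reading off this complex yields the short exact sequence, with $a$ and $b$ the ranks of the two graded pieces, and indecomposability of $\cE$ rules out any split summand $\cO_X(-F)$ or $\cL$.

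Granting part (i), sending $\cE$ to its extension class in $\Ext^1(\cL^{b},\cO_X(-F)^{a})=\Hom(\bk^{b},\bk^{a})\ts\bk^{\,w}$ defines an exact functor from Ulrich bundles to representations of $\K_w$ of dimension vector $(b,a)$, which the $\Ext$–computation above shows is an equivalence matching $\Ext^1$ on both sides. Part (ii) then translates into the assertion that a $\K_w$–representation is rigid exactly when its dimension vector is a real Schur root; for the generalized Kronecker quiver these are precisely the vectors $(\phi_{w,k\pm1},\phi_{w,k})$ produced from $(1,0)$ and $(0,1)$ by the reflection functors, the recursion $\phi_{w,k+1}=w\phi_{w,k}-\phi_{w,k-1}$ being the trace of the Coxeter transformation of $\K_w$.

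Finally, part (iii) is the standard fact that each real Schur root of $\K_w$ supports a unique indecomposable representation, necessarily exceptional; transporting this through the equivalence yields the bundles $\fU_k$, the preprojective roots giving $k\ge1$ and the preinjective ones $k\le0$. Exceptionality of $\fU_k$ as a bundle — the extra vanishing $\Ext^2(\fU_k,\fU_k)=0$ — I would obtain from Serre duality together with the relation $\fU_k(-H)\simeq\fU_{1-k}^{*}\ts\omega_X$, which is itself the geometric shadow of the duality of $\K_w$ reversing the arrows and swapping the two vertices, so that the root $(\phi_{w,k-1},\phi_{w,k})$ goes to $(\phi_{w,k},\phi_{w,k-1})$ and $k\mapsto1-k$. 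I expect the main obstacles to be exactly two: carrying out the ``$\Rightarrow$'' degeneration in part (i) cleanly enough that only the two prescribed line bundles survive in the monad, and upgrading the quiver equivalence so that it preserves $\Ext^2$ and not merely $\Hom$ and $\Ext^1$, since that is what is truly needed both for exceptionality and for the Serre–duality identity.
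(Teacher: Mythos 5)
Your proposal is correct and follows essentially the same route as the paper: part (i) is obtained from the derived category of the $\PP^1$-bundle (the relative Beilinson/Orlov decomposition), collapsed to a two-term extension by the Ulrich cohomology vanishings, and parts (ii)--(iii) come from the equivalence of $\langle \cO_X(-F), \cL\rangle$ with representations of the $w$-arrow Kronecker quiver, Kac's theorem on Schur roots, uniqueness and exceptionality of the Fibonacci-type indecomposables, and the duality $\cE \mapsto \cE^*\ts\omega_X(H)$ swapping the two line bundles to give $\fU_k(-H)\simeq \fU_{1-k}^*\ts\omega_X$. The one obstacle you flag --- preservation of $\Ext^2$ --- is handled in the paper more simply than by Serre duality: the quiver embedding is fully faithful as a functor of triangulated categories onto the full subcategory $\langle \cO_X(-F),\cL\rangle$, so all $\Ext$ groups are preserved, and $\Ext^2$ vanishes on the quiver side because the category of quiver representations is hereditary.
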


Here, a bundle $\cE$ is  {\it exceptional} if
$\RHom_X(\cE,\cE)=\langle \id_\cE\rangle$. ``Up to twist''
means ``up to tensoring with $\cO_X(tH)$ for
some $t \in \Z$''. The following result should be compared with \cite{casanellas-hartshorne:cubic-JEMS}.  

\begin{corol*}
  There is no stable Ulrich bundle of rank greater than one on a surface of
  minimal degree. If this degree is $4$, any non-Ulrich indecomposable
  ACM bundle is rigid.
\end{corol*}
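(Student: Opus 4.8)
The plan is to deduce both assertions from the classifications in Theorems~\ref{main-tame} and \ref{main-wild}, leaning on two standing facts: a stable bundle is indecomposable, and every Ulrich sheaf on a surface $X$ of degree $d_X$ is semistable and shares one reduced Hilbert polynomial, hence one slope $\mu$ with respect to $H$. Since stability is invariant under tensoring with $\cO_X(tH)$, I am free to replace a candidate stable bundle by any of its twists when matching it to the normal forms of those theorems. The unifying theme is that on a surface of minimal degree an Ulrich bundle of rank $\ge 2$ is always an iterated extension of Ulrich line bundles, all of slope $\mu$, and therefore has a proper subsheaf of slope exactly $\mu(\cE)$, which destroys strict stability.

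For the first assertion, suppose $\cE$ is a stable Ulrich bundle with $\rk\cE=r\ge 2$; being stable it is indecomposable. If $d_X\ge 5$, part (i) of Theorem~\ref{main-wild} gives, after a twist, a short exact sequence $0\to\cO_X(-F)^{a}\to\cE\to\cL^{b}\to 0$ with $a+b=r$. The cases $(a,b)=(1,0)$ and $(0,1)$ of the same biconditional exhibit $\cO_X(-F)$ and $\cL$ as Ulrich line bundles, so both have slope $\mu$; indecomposability of $\cE$ forbids $a=0$ and $b=0$, whence $1\le a<r$ and the subbundle $\cO_X(-F)^{a}$ is a proper subsheaf of slope $\mu=\mu(\cE)$, contradicting stability. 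For $d_X=4$ I would run the same argument after importing from the proof of Theorem~\ref{main-tame} that each rank-$2r$ member of the tame families is an iterated extension of the two Ulrich line bundles of $X$: such a bundle then carries an Ulrich sub-line-bundle of slope $\mu=\mu(\cE)$, so it is strictly semistable and cannot be stable. When $d_X\le 3$, or $X=V_2^2$, the surface is CM-finite and its finitely many indecomposable ACM bundles are explicitly known, so one checks by inspection that none is a stable Ulrich bundle of rank $>1$.

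The second assertion, read with the paper's convention that ``Ulrich'' is understood up to twist, is the contrapositive of the converse half of Theorem~\ref{main-tame}. Every bundle in a tame $\PP^1$-family deforms nontrivially, hence has $\Ext^1_X(\cE,\cE)\ne 0$ and is non-rigid, and the same holds after any twist; so the non-rigid indecomposable ACM bundles on $X$ are precisely the twists of the Ulrich family bundles. Consequently any indecomposable ACM bundle no twist of which is Ulrich falls on the ``rigid'' side of the dichotomy of Theorem~\ref{main-tame}, which is exactly the claim.

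I expect the only genuine difficulty to be the $d_X=4$ case of the first assertion. For $d_X\ge 5$ the normal form of Theorem~\ref{main-wild} produces a destabilizing subsheaf for free, but the statement of Theorem~\ref{main-tame} records only that the rank-$2r$ bundles vary in a $\PP^1$, and this by itself forbids rigidity without forbidding stability. The crux is therefore to re-enter the construction behind Theorem~\ref{main-tame} and pin down an explicit Ulrich sub-line-bundle of slope $\mu$; exhibiting this equal-slope subobject is what rules out stability, as opposed to merely non-simplicity or non-rigidity.
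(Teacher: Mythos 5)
Your core argument for the scroll cases is exactly the intended derivation, and it is correct. By Proposition \ref{ulrichprop} (equivalently Theorem \ref{main-wild}(i) for $d_X\ge 5$), every Ulrich bundle is, up to twist, an extension of $\cL^b$ by $\cO_X(-F)^a$; since these two line bundles have the same Hilbert polynomial, any indecomposable Ulrich bundle of rank $\ge 2$ has a proper subsheaf with the same reduced Hilbert polynomial and is therefore strictly semistable. This is precisely what the paper records in the corollary following Proposition \ref{ulrichprop}: the graded object of a rank-$r$ Ulrich bundle is $\cO_X(-F)^a\oplus\cL^{r-a}$, so its Jordan--H\"older factors have rank one. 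Note also that the difficulty you flag at $d_X=4$ is not actually there: Proposition \ref{ulrichprop} is proved for arbitrary smooth surface scrolls, with no lower bound on the degree, and Theorems \ref{quartic-is-tame} and \ref{S(1,3)-is-tame} state the extension $0\to\cO_X(-F)^a\to\cE\to\cL^b\to 0$ explicitly; there is no need to ``re-enter the construction'' behind Theorem \ref{main-tame}. Your deduction of the second assertion from the dichotomy of Theorem \ref{main-tame} is also fine; concretely, the non-Ulrich indecomposables are the ACM line bundles of Lemma \ref{linebundles} on $S(2,2)$, together with twists of $\cV$, $\cV(-F)$, $\cW$ on $S(1,3)$, all of which are rigid.

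The genuine gap is your treatment of the Veronese surface. You assert that, $V_2^2$ being CM-finite, one checks ``by inspection'' that none of its indecomposable ACM bundles is a stable Ulrich bundle of rank $>1$. That inspection in fact yields the opposite: the tangent bundle $T_{\PP^2}$ is ACM with respect to $\cO_{\PP^2}(2)$, is initialized with $\hh^0(T_{\PP^2})=8=d_X\rk(T_{\PP^2})$, hence Ulrich (equivalently, the Euler sequence gives $\HH^\bullet(\PP^2,T_{\PP^2}(-2))=\HH^\bullet(\PP^2,T_{\PP^2}(-4))=0$, which is the Eisenbud--Schreyer--Weyman criterion), and it is a stable bundle of rank $2$. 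So a stable Ulrich bundle of rank $>1$ \emph{does} exist on this surface of minimal degree. To be fair, this exposes a defect in the corollary's literal wording rather than in your strategy: the paper's theorems only support the claim for rational normal scrolls (consistently with the introduction's identification of smooth degree-$4$ surfaces in $\PP^5$ with quartic scrolls, which likewise leaves out $V_2^2$), and your proof is complete exactly on that scope. But as written, your ``by inspection'' step asserts something false and cannot be repaired; the correct move is to restrict the statement to scrolls, not to claim the Veronese case can be checked.
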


Our next result concerning the classification of rigid ACM sheaves 
 deals with the CM-wild  scrolls
$S(\vartheta-1,\vartheta)$ and $S(\vartheta,\vartheta)$ for $\vartheta
\ge 3$.
To state it we anticipate from cf. \S \ref{braid-definition} and \S \ref{rigid construction}.
Consider the braid group $B_3$, whose standard
generators $\sigma_1$ and $\sigma_2$ act
by right mutation over $3$-terms exceptional collections $(\cS_1,\cS_2,\cS_3)$ over $X$,
starting with $\bB_\emptyset=(\cL[-1],\cO_X(-F),\cO_X)$.
Given a vector
$\vk=(k_1,\ldots,k_s) \in \Z^s$, we let $\sigma^{\vk} = \sigma_1^{k_1}
\sigma_2^{k_2} \sigma_1^{k_3} \cdots$, and $\sigma^\emptyset = 1$.
The exceptional collection $\bB_\vk$ obtained by $\sigma^\vk$ can be extended to
a full exceptional collection $\bC_\vk=(\cL(-F)[-1],\bB_\vk)$.
Thinking of the Euler characteristic $v_{i-1}:=\chi(\cS_{i},\cS_{i+1})$
(with cyclic indexes) of pairs of bundles in the
mutated $3$-term collection $\bB_\vk$, we  
define an operation of $B_3$ on $\Z^3$, by:
\begin{align*}
  \sigma_1 : (v_1,v_2,v_3) \mapsto (v_1v_3-v_2,v_1,v_3), &&
  \sigma_2 : (v_1,v_2,v_3) \mapsto (v_1,v_1v_2-v_3,v_2).
\end{align*}
To $\bB_\emptyset$ corresponds $v^\emptyset = (2,d_X-4,d_X-2)$.
Set $\bar t \in \{0,1\}$ for the remainder of the
division of an integer $t$ by $2$.
Given $\vk=(k_1,\ldots,k_s)$ and $t\le s$, we write the truncation
$\vk(t)=(k_1,\ldots,k_{t-1})$.
Having this set up, we finally define the set:
\[  \fbK = \{\vk = (k_1,\ldots,k_s) \mid (-1)^{t} k_{t-1}
(\sigma^{\vk(t)}
.v^\emptyset)_{2 \bar t +1} \le 0, \forall t \le s\}.
\]
Explicitly, a vector length-$s$ vector $\vk=(k_1,\ldots,k_s)$ belongs
to $\fbK$ if, for all subvectors $\vk(t)=(k_1,\ldots,k_{t-1})$,
applying $\sigma^{\vk(t)}$ to $v$ we get a triple of integer whose
\begin{itemize}
\item third element has the same sign as $k_{t-1}$ (for odd $t$);
\item first element has opposite sign with respect to $k_{t-1}$ (for even $t$).
\end{itemize}

\begin{main} \label{alla fine je l'abbiamo fatta}
  Let $\vk$ be an element of $\fbK$. Then, there is an exceptional ACM
  bundle $\fF_\vk$ corresponding to $\vk$,
  which is the middle element of the exceptional collection $\bB_\vk$.
  If $\vartheta=3$, any indecomposable rigid ACM
  bundle is of the form $\fF_\vk$ or $\fF_\vk^* \ts \omega_X$, for some $\vk \in
  \fbK$, up to twist.
\end{main}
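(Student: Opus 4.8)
I would first treat the assignment $\vk\mapsto\fF_\vk$ by induction on the length $s$ of $\vk$, the base case $\fF_\emptyset=\cO_X(-F)$ being the ACM (indeed exceptional) middle term of $\bB_\emptyset$. Since right mutation carries an exceptional triple to an exceptional triple, the only point to control is that the mutated middle object stays a genuine \emph{sheaf} and remains ACM. A one-step mutation of a pair $(\cA,\cB)$ sits in a distinguished triangle built from $\cA$ and $\RHom_X(\cA,\cB)$; it is a shifted sheaf precisely when $\RHom_X(\cA,\cB)$ is concentrated in a single degree and the evaluation map has constant rank, in which case the middle term fits in a short exact sequence whose other two terms are ACM and is therefore itself ACM. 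The relevant Euler characteristic is, by construction, the entry of the iterated vector $\sigma^{\vk(t)}.v^\emptyset$ tracked by the rules for $\sigma_1,\sigma_2$, and I would check that the sign inequalities cutting out $\fbK$ record exactly that each pair to be mutated is $\RHom$-concentrated in one degree with the sign making the $|k_{t-1}|$-fold mutation stay a sheaf. Exceptionality of $\fF_\vk$ is then automatic from that of $\bB_\vk$, and extending by $\cL(-F)[-1]$ to $\bC_\vk$ fixes the ambient coordinates in $\bD^b(X)$ used below.

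\textbf{Reduction of rigid ACM bundles to exceptional bundles.} For the converse with $\vartheta=3$, the structural remark is that the underlying surface is $\FF_1$ (for $S(2,3)$) or $\FF_0$ (for $S(3,3)$), both del Pezzo with $\omega_X^{-1}$ ample. Let $\cE$ be an indecomposable rigid ACM bundle; rigidity and indecomposability are intrinsic. By the theory of rigid sheaves on del Pezzo surfaces (Kuleshov--Orlov, Gorodentsev--Rudakov), a rigid sheaf splits as a direct sum of exceptional sheaves, so the indecomposable $\cE$ is \emph{exceptional}. Here the last vanishing $\Ext^2_X(\cE,\cE)\cong\Hom_X(\cE,\cE\ts\omega_X)^*=0$ holds because $\cE$ is stable while $\cE\ts\omega_X$ has strictly smaller slope, so that $\RHom_X(\cE,\cE)=\bk$.

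\textbf{Exhaustion by the braid orbit.} It then remains to realize every exceptional ACM bundle, up to twist, as some $\fF_\vk$ or $\fF_\vk^*\ts\omega_X$. Every exceptional bundle on such a surface belongs to a full exceptional collection, and the braid group acts transitively on these (known for $\FF_0$ and $\FF_1$); after normalizing the first object to $\cL(-F)[-1]$ by a twist, $\cE$ becomes the middle term of a collection $B_3$-equivalent to $\bB_\emptyset$, i.e. $\cE\simeq\fF_\vk$ for some word $\vk$. The duality $\cE\mapsto\cE^*\ts\omega_X$, induced by the Serre functor, interchanges the two admissible index ranges---mirroring the identity $\fU_k(-H)\simeq\fU_{1-k}^*\ts\omega_X$ of Theorem~\ref{main-wild}---and yields the second family. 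Finally I would translate the ACM condition on $\cE$ into the combinatorics of the orbit: the vanishing of all intermediate cohomology of $\cE$ along the mutation path is equivalent to the sign inequalities defining $\fbK$, read off from $\sigma^{\vk(t)}.v^\emptyset$. For $\vartheta=3$ the small value $w\in\{3,4\}$ makes this dictionary between the cohomology table of $\cE$ and the signs of the $v$-entries explicit and complete, pinning down precisely $\vk\in\fbK$.

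\textbf{Main obstacle.} The substantial difficulty is this last translation: proving that ACM-ness is \emph{equivalent} to $\vk\in\fbK$ and that the descent by inverse mutations never leaves the ACM locus and terminates at $\bB_\emptyset$. Going forward the signs are imposed by the definition of $\fbK$; going backward one must show that an arbitrary exceptional ACM $\cE$ forces those signs, i.e. that each relevant $\RHom_X$ is concentrated in one degree (no spurious shift) while a numerical complexity---essentially the total number of generators encoded in $v^\emptyset$---strictly drops at each step. Controlling this termination together with the ACM bookkeeping is exactly what confines the present argument to $\vartheta=3$.
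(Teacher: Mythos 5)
Your construction half is broadly in line with the paper's Theorem \ref{Fk} (mutation of irregular pairs, keeping track of signs via $\sigma^{\vk}.v^\emptyset$), but the classification half has genuine gaps. First, your reduction ``rigid indecomposable $\Rightarrow$ exceptional'' is justified by claiming $\cE$ is stable, so that $\Ext^2_X(\cE,\cE)\cong\Hom_X(\cE,\cE\ts\omega_X)^*$ vanishes by a slope comparison. But rigid ACM bundles on these scrolls are typically \emph{not} stable: the paper's Corollary states there is no stable Ulrich bundle of rank greater than one on a surface of minimal degree, and already $\fU_2$ on $S(2,3)$ (rank $4$, rigid, indecomposable) is a counterexample to your premise. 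One could instead invoke the Kuleshov--Orlov theorem that rigid sheaves on del Pezzo surfaces are direct sums of exceptional sheaves, but as written your argument is unsound. Second, the ``exhaustion by the braid orbit'' step does not follow from what you cite: transitivity of the $B_4$-action (up to shifts and twists) on full exceptional collections does not place $\cE$ as the \emph{middle} term of a collection obtained from $\bB_\emptyset$ by a word in the subgroup $B_3$ fixing $\cL(-F)[-1]$, nor does it guarantee that the resulting word lies in $\fbK$; this positioning and normalization would need a real argument. Third, and most importantly, you yourself flag the equivalence ``$\cE$ ACM $\Leftrightarrow \vk\in\fbK$'' together with the termination of the inverse-mutation descent as the ``main obstacle'' and leave it unproven --- but this is precisely the content of the theorem, so the proposal is incomplete at its core.

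For contrast, the paper avoids both the transitivity theorem and the general rigid-splitting theorem. It starts from the structure result for ACM bundles valid exactly when $\vartheta+\epsilon\le 3$ (Proposition \ref{prop-abcd}), which exhibits $\cE$ as a monad with terms $\cO_X^c\oplus\cO_X(-F)^a$ and $\cL^b\oplus\cL(-F)^d$; Lemma \ref{all rigid} then propagates rigidity of $\cE$ to the kernel, cokernel and Ulrich part of the monad, and Lemma \ref{aspita} (Kac's theorem for the Kronecker quiver) identifies each rigid layer as a sum of at most two consecutive exceptional Fibonacci bundles. Iterating, with indecomposability killing the spurious summands at each stage and finiteness of the rank forcing termination, produces the vector $\vk\in\fbK$ directly, with ACM-ness never needing to be ``translated'' into combinatorics because it is carried along by the construction. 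This is why the paper's restriction to $\vartheta=3$ enters through the monad existence, not through any explicit cohomological dictionary of the kind you propose.
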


The structure of the paper is as follows.
In \S \ref{section background}  we write the basic form of
a resolution of ACM bundles on scrolls relying on the structure of the
derived category of a $\PP^1$-bundle. We also provide here some
cohomological splitting criteria for scrolls of low degree. 
In \S \ref{section kronecker} we
study Ulrich bundles on surfaces of minimal degree in terms of representations of
Kronecker quivers and prove Theorem \ref{main-wild} using Kac's
classification of Schur roots.
In \S \ref{quartic} we focus on quartic scrolls
and prove their tameness 
according to Theorem \ref{main-tame}.
In \S \ref{rigid section}, we give the proof of Theorem \ref{alla fine je l'abbiamo fatta}.

\section{Resolutions for bundles on scrolls}

In this section, after providing some essential
terminology, we give our basic technique to
classify ACM bundles on ruled surfaces. Indeed, it is shown in \S\ref{ab}
that such bundle $\cE$ has a functorial two-sided resolution computed upon
certain cohomology groups of $\cE$. We derive in
\S\ref{section splitting} a splitting criterion over scrolls
of low degree.

\label{section background}

\subsection{Background}

Let $\bk$ be an algebraically closed field.
Given a vector space $V$ over $\bk$, we let $\PP V$ be the
projective space of $1$-dimensional quotients of $V$.
If $\dim(V)=n+1$, we write $\PP^n=\PP V$.
It will be understood that a small letter denotes the dimension of a vector space
 in capital letter, for instance if $\cE$ is a coherent sheaf
on a variety $X$ then $\hh^i(X,\cE) = \dim _\bk \HH^i(X,\cE)$.
For a pair of coherent sheaves $\cE_i$, $\cE_2$ on $X$, the
Euler characteristic is
$\chi(\cE_1,\cE_2)=\sum(-1)^j\ext^j_X(\cE_1,\cE_2)$. We abbreviate
$\chi(\cO_X,\cE)$ to $\chi(\cE)$. 
A {\it (vector) bundle} is a  coherent locally free sheaf.

\subsubsection{Derived categories}

We will use the derived category $\bD^b(X)$ of bounded complexes of
coherent sheaves on $X$.
We refer to \cite{huybrechts:fourier-mukai} for a detailed account of it.
An object $\cE$ of $\bD^b(X)$ is a bounded complex of coherent sheaves
on $X$, we will denote by $\HH^\bullet(X,\cE)$ the total complex
associated with the hypercohomology of $\cE$.
If $\cE$ is concentrated in degree $i$, then we write $|\cE|$ for the
coherent sheaf $\cH^i(\cE)$, i.e. $|\cE| = \cE[i]$.

An object $\cE$ of  $\bD^b(X)$ is {\it simple} if 
$\Hom_X(\cE,\cE) \simeq \bk$, and {\it exceptional} if
$\RHom_X(\cE,\cE) \simeq \bk$.
Given a set $S$ of objects of $\bD^b(X)$, we write $\langle S \rangle$
for the smallest full triangulated subcategory of $\bD^b(X)$
containing all objects of $S$. The same notation is used for a
collection $S$ of subcategories of $\bD^b(X)$.
Given a pair of objects $\cE$ and $\cF$ of $\bD^b(X)$ we have the left
and right mutations $\rL_\cE(\cF)$ and $\rR_\cF(\cE)$ defined
respectively by the distinguished triangles given by natural evaluations:
\[
\RHom_X(\cE,\cF) \ts \cE \to \cF \to \rL_\cE(\cF)[1], \qquad
\rR_\cF(\cE)[-1] \to \cE \to \RHom_X(\cE,\cF)^* \ts \cF.
\]

\subsubsection{Arithmetically Cohen-Macaulay varieties and sheaves}
 A {\it polarized} or {\it embedded variety} is a pair $(X,H)$, where
$X$ is an integral $m$-dimensional projective variety and $H$ is a {\it very ample} divisor class on
$X$. The degree $d_X$ is $H^{m}$. If
$H$ embeds $X$ in $\PP^n$, then the ideal $I_X$ of $X$ sits in
$R = \bk[x_0,\ldots,x_n]$ and the homogeneous coordinate $\bk[X]$ is $R/I_X$.
The variety $X \subset \PP^n$ is {\it ACM} (for arithmetically Cohen-Macaulay) if
$\bk[X]$ is a graded Cohen-Macaulay ring, i.e.  the $R$-projective dimension of $\bk[X]$ 
is $n-m$.

Given a coherent sheaf $\cE$ on a polarized variety $(X,H)$, and
$i,t \in \N$, we write $\cE(t)$ for $\cE(t H)$ and:
\[
\HH^i_*(X,\cE)=\bigoplus_{t \in \Z} \HH^i(X,\cE(t)).
\]
For each $i$, $\HH^i_*(X,\cE)$ is a module over $\bk[X]$.
We say that $\cE$ is {\it initialized} if $\HH^0(X,\cE) \ne 0$ and
$\HH^0(X,\cE(-1)) = 0$, i.e. if $\HH^0_*(X,\cE)$ is zero in negative
degrees and non-zero in positive degrees.
Any torsion-free sheaf $\cE$ on a positive-dimensional variety has an {\it initialized twist},
i.e. there is a unique integer $t_0$ such that $\cE(t_0)$ is initialized.
The $\bk[X]$-module $\HH^0_*(X,\cE)$ is also finitely generated in this case.

Given $m \ge 1$, a vector bundle $\cE$ on an smooth ACM $m$-dimensional variety $X$ is {\it ACM} (for arithmetically Cohen-Macaulay) if $\cE$ has no
intermediate cohomology:
\[
\HH^i_*(X,\cE)=0, \qquad \mbox{for all $1 \le i \le m-1$}.
\]
Equivalently, $\cE$ is ACM if $E=\HH^0_*(X,\cE)$ is a maximal Cohen-Macaulay
module over $\bk[X]$, i.e.,  $\depth(E) = \dim(E) = m+1$.

The initialized twist $\cE(t_0)$ of $\cE$ satisfies $\hh^0(X,\cE(t_0)) \le d_X \rk(\cE)$.
We say that $\cE$ is {\it Ulrich} if equality is attained in
the previous inequality.

We will use Gieseker-Maruyama and slope-semi-stability of bundles with
respect to a given polarization. We refer to
 \cite{huybrechts-lehn:moduli}.

\subsection{Reminder on Hirzebruch surfaces and their derived categories}

\label{hirzebruch}

Let $U$ be a $2$-dimensional $\bk$-vector space, and let
$\PP^1=\PP U$ so that $U=\HH^0(\PP^1,\cO_{\PP^1}(1))$. There is a
 identification $U \simeq U^*$, canonical up to the choice of a nonzero scalar.
Let $\epsilon \ge 0$ be an integer and consider the Hirzebruch surface
$\FF_\epsilon = \PP(\cO_{\PP^1}\oplus \cO_{\PP^1}(\epsilon))$.
Write $\pi : \FF_\epsilon \to \PP^1$ for the projection on the base,
 let $F = c_1(\pi^*(\cO_{\PP^1}(1)))$ be the  class of a
fibre of $\pi$, and $\cO_\pi(1)$ be the relatively ample tautological
line bundle on $\FF_\epsilon$.
For any integer $\vartheta > 1$, setting $d_X=2\vartheta+\epsilon$, the surface $\FF_\epsilon$ is embedded in
$\PP^{d_X + 1}$ by the line bundle $\cO_\pi(1) \ts
\cO_X(\vartheta F)$, as a ruled surface of degree $d_X$.
We denote by $\cO_X(H)$ this line bundle, so that the polarized
variety $(\FF_\epsilon,H)$ is a rational normal scroll $S(\vartheta,\vartheta+\epsilon)$.
Of course, we have $F^2=0$ and $F \cdot H=1$.
The canonical bundle of $X=\FF_\epsilon$ is $\omega_X \simeq \cO_X((d_X-2)F-2H)$.

For $\epsilon > 0$, denote by $\Delta \in |\cO_X(H-(\vartheta+\epsilon)F)|$ the {\it negative section} of $X$,
i.e. the section of
$\pi$ with self-intersection $-\epsilon$.
We have for $a \ge 0$:
\[
\mbox{$\hh^k(X,\cO_X(a H+b F))=\sum_{i=0,\ldots,a}\hh^k(\PP^1,\cO_{\PP^1}(a\vartheta+i\epsilon
+ b)).$}
\]
On the other hand $\HH^k(X,\cO_X(b F-H))=0$ for all $k$, while
$\hh^k(X,\cO_X(a H+b F))$ can be computed for $a \le -2$ by Serre duality.
We fix the notation:
\[
\cL = \cO_X((d_X-1)F-H).
\]

\subsubsection{Derived category of Hirzebruch surfaces}

By a result of Orlov \cite{orlov:bundles}, we have the semiorthogonal
decomposition (for a definition cf. for instance \cite[\S 1]{huybrechts:fourier-mukai}):
\begin{equation}
  \label{orlov}
\bD^b(X)=\langle \pi^*\bD^b(\PP^1) \ts \cO_X(-H), \pi^*\bD^b(\PP^1) \rangle.  
\end{equation}
In turn, by Beilinson's theorem, see for instance
\cite[\S 8]{huybrechts:fourier-mukai}, we have:
\begin{equation}
  \label{beilinson-P1}
  \bD^b(\PP^1) = \langle \cO_{\PP^1}(t-1),\cO_{\PP^1}(t)\rangle,
  \qquad \mbox{for any $t\in \Z$}.
\end{equation}

The right adjoint of $\pi^*$ is $\bR \pi_*$. Let us denote by $\Theta
: \bD^b(\PP^1) \to \bD^b(X)$ the functor that sends $\cF$ to
$\pi^*(\cF) \ts \cO_X(-H)$ and by 
 $\Theta^*$ its left adjoint.
By \eqref{orlov}, any object $\cE$ of $\bD^b(X)$ fits into a
functorial distinguished triangle:
\begin{equation}
  \label{esatto!s}
\pi^* \bR\pi_*\cE \to \cE \to \Theta \Theta^*\cE.
\end{equation}

To compute the expression of these functors, 
we first use \eqref{beilinson-P1} with $t=0$ to check that $\pi^* \bR \pi_*(\cE)$ fits into a functorial distinguished triangle:
\begin{equation}
  \label{Psi}
\HH^\bullet(X,\cE(-F)) \ts \cO_X(-F) \xr{\alpha} \HH^\bullet(X,\cE)
\ts\cO_X \to \pi^* \bR \pi_*\cE, 
\end{equation}
i.e., $\pi^* \bR \pi_*\cE$ is the cone of $\alpha$.

Computing $\Theta^*(\cE)$, cf. \cite[\S 3]{huybrechts:fourier-mukai}
we get:
\[
\Theta^*\cE= \bR \pi_*(\cE(d_XF-H))[1].
\]
Then, using
\eqref{beilinson-P1} with $t=1-d_X$ we get the distinguished triangle: 
\begin{equation}
\label{Thetas}
\HH^\bullet(X,\cE(-H)) \ts \cL(-F) \to
\HH^\bullet(X,\cE(F-H)) \ts\cL \to \Theta
\Theta^*\cE[-1].
\end{equation}

\subsubsection{Exceptional collections and braid group action}

\label{braid-definition}

Let $\epsilon \ge 0$ and set $X = \FF_\epsilon$. A collection of
objects $(\cS_0,\ldots,\cS_r)$ in $\bD^b(X)$ is {\it exceptional}
if it consists of exceptional objects such that $\RHom_X(\cS_j,\cS_k)=0$
if $j>k$. Such collection is {\it full} if it generates $\bD^b(X)$.
Any full exceptional collection on $X$ has $r=3$. As consequence of
Orlov's theorem recalled above, one
of them is:
\begin{equation}
  \label{C1}
\bC_\emptyset=(\cL(-F)[-1],\cL[-1],\cO_X(-F),\cO_X).  
\end{equation}
A motivation for this notation will only be apparent in \S \ref{rigid construction}.

Let us describe the action of the braid group $B_4$ in $4$ strands,
which we number  from $0$ to $3$, on the set of
exceptional collections. Let
$\sigma_i$ be the generator of $B_4$
corresponding to the crossing of the $i$-th strand above the
$(i+1)$-st one. With $\sigma_i$ we associate $\rR_{\cS_{i+1}}\cS_{i}$
so that
$\sigma_i$ sends an exceptional collection $\bC=(\cS_0,\ldots,\cS_3)$
to a new collection $\sigma_i\bC$ where we replace $(\cS_i,\cS_{i+1})$
with $(\cS_{i+1},\rR_{\cS_{i+1}}\cS_{i})$.
Replacing $(\cS_i,\cS_{i+1})$ with $(\rL_{\cS_i}\cS_{i+1},\cS_i)$, 
gives $\sigma_i^{-1}$.
 This satisfies the braid group relations.
The subgroups $B_4$ of braids not involving a given strand
operate on partial exceptional collections.

\medskip

Assume now $X=\FF_\epsilon$ is a del Pezzo surface, i.e. if $\epsilon
\in \{0,1\}$.
Then it
turns out (although we will not need this) that this action is
transitive on the set of all full exceptional collections, cf. \cite[Theorem 6.1.1]{gorodentsev-kuleshov}.
Also, in this case the objects $\cS_i$ are sheaves up to a shift, and actually
(shifted) vector bundles 
if torsion-free. Finally, in this case for $j<k$ there is at most one $i$ such that $\Ext^i_X(|\cS_j|,|\cS_k|) \ne 0$
and $i \in \{0,1\}$, cf. \cite[Proposition 5.3.5]{gorodentsev-kuleshov}.
For any $\epsilon$, an exceptional pair of shifted vector bundles  $(\cS_j,\cS_k)$ on
$\FF_\epsilon$ is called {\it regular} if
$i=0$ and {\it irregular} if $i=1$.

Given an irregular exceptional pair  $(\cP,\cN)$ on $X=\FF_\epsilon$
(for any $\epsilon \ge 0$),
we set $\cG_0=|\cP|[-1]$ and $\cG_1=|\cN|$ and define:
  \begin{align*}
      \cG_{k+1} & = \rR_{\cG_k} \cG_{k-1} && \mbox{for $k \ge 1$}, \\
      \cG_{k-1} & = \rL_{\cG_k} \cG_{k+1} && \mbox{for $k \le 0$}.
  \end{align*}
Note that the two relations we have just written are
both formally valid for any $k \in \Z$.
It turns out that $\cG_{k}$ is concentrated in degree $1$ for $k \le
0$, and in degree $0$ for $k \ge 1$.

Explicitly, we set $w=\hom_X(\cG_0,\cG_1)$ and suppose $w \ge 2$.
For $k \le 0$ we have the sequences: 
\begin{align*}
  & 0 \to \cG_{-2} \to \cG_{-1}^{w} \to \cG_0 \to 0, \\
  & 0 \to \cG_{-3} \to \cG_{-2}^{w} \to \cG_{-1} \to 0, \\
  & 0 \to \cG_{-4} \to \cG_{-3}^{w} \to \cG_{-2} \to 0,
\intertext{etc., while for positive $k$ the first mutation sequences read:}
  & 0 \to \cG_{1} \to \cG_{2}^{w} \to \cG_3 \to 0, \\
  & 0 \to \cG_{2} \to \cG_{3}^{w} \to \cG_{4} \to 0.
\intertext{For the values around $0$ the sequences take the special form:}
  & 0 \to \cG_{1}[-1] \to \cG_{-1} \to \cG_0^{w} \to 0, \\
  & 0 \to \cG_{1}^w \to \cG_{2} \to \cG_{0}[1] \to 0.
\end{align*}

\medskip
Let us recall one more feature of the exceptional objects $\cG_k$
generated by an irregular exceptional pair $(\cP,\cN)$, related to
generalized Fibonacci numbers.
Set $w = \ext^1_X(|\cP|,|\cN|)$ and define the integers $\phi_{w,k}$
recursively as:
\[
\phi_{w,0} = 0, \quad \phi_{w,1} = 1, \quad \phi_{w,k+1} = w
\phi_{w,k}-\phi_{w,k-1}, \quad \mbox {for $k\ge 1$}.
\]
Then, for all $k$, there are exact sequences of sheaves:
\begin{align}
\label{NP>} & 0 \to |\cN|^{\phi_{w,k}} \to \cG_k \to |\cP|^{\phi_{w,k-1}} \to 0, &&\mbox{for $k \ge 1$},\\
\label{NP<} & 0 \to |\cN|^{\phi_{w,-k}} \to \cG_k[1] \to |\cP|^{\phi_{w,1-k}} \to 0,&&\mbox{for $k \le 0$}.
\end{align}
The existence of these sequences (which we sometimes call {\it
  Fibonacci sequences}) is easily carried over to our case from \cite{brambilla:fibonacci}. 
The first Fibonacci sequences look like:
\begin{align*}
  & 0 \to \cG_{1}^{\phi_{w,2}} \to \cG_{2} \to \cG_0[1]^{\phi_{w,1}} \to 0, &
  & 0 \to \cG_{1}^{\phi_{w,3}} \to \cG_{3} \to \cG_{0}[1]^{\phi_{w,2}} \to 0,\\
  & 0 \to \cG_{1}^{\phi_{w,1}} \to \cG_{-1}[1] \to \cG_0[1]^{\phi_{w,2}} \to 0, &
  & 0 \to \cG_{1}^{\phi_{w,2}} \to \cG_{-2}[1] \to \cG_{0}[1]^{\phi_{w,3}} \to 0.
\end{align*}

\subsection{ACM line bundles on scrolls}

ACM line bundles on Hirzebruch surfaces, and more generally on
rational normal scrolls are well-known, cf. \cite{miro-roig:scrolls}.

\begin{lem} \label{linebundles}
Let $\cE$ be an initialized ACM line bundle on $S(\vartheta,\vartheta+\epsilon)$.
Then $\cE \simeq \cO_X(\ell F)$ for $0 \le \ell \le d_X-1$,
or $\cE \simeq \cO_X(H-F)$. Also, $\cE$ is Ulrich iff
$\cE\simeq \cO_X(H-F)$ or $\cE \simeq \cO_X((d_X-1)F)=\cL(H)$.
\end{lem}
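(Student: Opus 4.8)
The plan is to classify initialized ACM line bundles $\cE = \cO_X(aH+bF)$ on $S(\vartheta,\vartheta+\epsilon)$ directly from the cohomological vanishing formulas recorded in \S\ref{hirzebruch}, and then to single out the Ulrich ones by a rank/degree count. First I would observe that any line bundle on the Hirzebruch surface is of the form $\cO_X(aH+bF)$ since $\Pic(X) = \Z H \oplus \Z F$ (equivalently $\Z\Delta \oplus \Z F$). The ACM condition is the vanishing $\HH^1_*(X,\cE)=0$, i.e.\ $\HH^1(X,\cO_X(aH+(b+tH\cdot F)F))=0$ for all twists $t$. Using the displayed formula
\[
\hh^k(X,\cO_X(aH+bF))=\sum_{i=0}^{a}\hh^k(\PP^1,\cO_{\PP^1}(a\vartheta+i\epsilon+b)),
\]
valid for $a\ge 0$, together with $\HH^k(X,\cO_X(bF-H))=0$ for all $k$ and Serre duality (via $\omega_X\simeq\cO_X((d_X-2)F-2H)$) to handle $a\le -2$, I would reduce the whole problem to controlling $\HH^1$ of line bundles on $\PP^1$.

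Next I would carry out the case analysis on the coefficient $a$ of $H$. When $a=0$ the bundle is $\cO_X(bF)=\pi^*\cO_{\PP^1}(b)$, which is ACM (its higher direct images and intermediate cohomology vanish after any twist by $H$, as each twist only shifts the $\PP^1$-degree), and initialization $\HH^0(\cE)\ne 0$, $\HH^0(\cE(-1))=0$ forces $0\le b\le d_X-1$; this gives the family $\cO_X(\ell F)$. For $a=1$, i.e.\ $\cE=\cO_X(H+bF)$, the intermediate cohomology $\HH^1$ of a twist is a sum of two $\PP^1$-terms $\hh^1(\cO_{\PP^1}(\ast))$; requiring all of these to vanish pins down $b$ so that the only surviving initialized, ACM possibility is $\cO_X(H-F)$. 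For $|a|\ge 2$ (using Serre duality to fold $a\le -2$ into $a\ge 2$), the sum has at least three consecutive $\PP^1$-summands and one can always find a twist $t$ making an interior summand have degree in the ``gap'' $[-\,\text{something},-1]$ where $\hh^1\ne 0$, so no such line bundle is ACM; this eliminates all remaining cases and completes the classification.

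For the Ulrich statement I would use the characterization from \S\ref{hirzebruch}: an initialized ACM bundle $\cE$ of rank $r$ is Ulrich iff $\hh^0(X,\cE)=d_X\,r$, i.e.\ here $\hh^0(X,\cE)=d_X$. I would simply compute $\hh^0$ for the two surviving types using the cohomology formula: for $\cE=\cO_X(\ell F)$ one gets $\hh^0=\ell+1$, which equals $d_X$ exactly when $\ell=d_X-1$, giving $\cO_X((d_X-1)F)=\cL(H)$; and for $\cE=\cO_X(H-F)$ a direct evaluation of the sum $\sum_{i=0}^{1}\hh^0(\cO_{\PP^1}(\vartheta+i\epsilon-1))$ yields $\vartheta+(\vartheta+\epsilon)=d_X$, so it too is Ulrich. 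This matches the claimed list $\{\cO_X(H-F),\ \cO_X((d_X-1)F)\}$.

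The main obstacle I anticipate is bookkeeping in the case $|a|\ge 2$: one must verify uniformly, across all $\epsilon\ge 0$ and all $\vartheta>1$, that some hyperplane twist $t$ forces a nonzero $\HH^1$, i.e.\ that among the degrees $a\vartheta+i\epsilon+b+t\,d_X$ (for $0\le i\le a$) at least one lands in the range $[-a\vartheta+\cdots,-1]$ producing $\hh^1\ne 0$ on $\PP^1$; the delicate point is that when $\epsilon=0$ all summands coincide, so the argument degenerates and one must instead exploit that the single repeated $\PP^1$-degree can be pushed negative-but-not-too-negative by choosing $t$, while $a\ge 2$ guarantees the $\hh^1$ contribution is genuinely nonzero. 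Handling $\epsilon=0$ and $\epsilon>0$ together, and separating the boundary subcases $a=\pm 1$ cleanly from the bulk, is where care is needed, but each individual computation is routine once the formula is in hand.
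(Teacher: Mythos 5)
Your reduction to $\hh^1$ of line bundles on $\PP^1$ via the pushforward formula and Serre duality is the right framework (the paper itself gives no proof of this lemma, quoting it from \cite{miro-roig:scrolls}), and your Ulrich count at the end is correct. But the core of your case analysis contains a genuine error: the elimination of $|a|\ge 2$ by showing that ``no such line bundle is ACM'' cannot possibly work, because the ACM condition is invariant under twisting by $\cO_X(H)$ (the graded module $\HH^1_*$ merely shifts). For instance $\cO_X(2H-F)=\cO_X(H-F)(H)$ is ACM, and in fact $\cO_X(aH+bF)$ is ACM precisely when $-1\le b\le d_X-1$, for \emph{every} $a\in\Z$: for a twist with $H$-coefficient $c\ge 0$ the relevant $\PP^1$-degrees are $c\vartheta+i\epsilon+b\ge b$, so the only binding constraint among all such twists is $b\ge -1$, coming from $c=0$ where there is a single summand and no ``interior summand'' to push into the gap; twists with coefficient $-1$ have no cohomology; and twists with coefficient $\le -2$ give, by Serre duality, the symmetric constraint $b\le d_X-1$. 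So the search for a bad twist that you describe as the ``main obstacle'' is not delicate bookkeeping but an impossibility. What actually rules out $a\ge 2$ is \emph{initialization}: ACM forces $b\ge -1$, and then $\cE(-H)=\cO_X((a-1)H+bF)$ has a pushforward summand of degree $(a-1)(\vartheta+\epsilon)+b\ge \vartheta-1>0$, so $\HH^0(X,\cE(-H))\ne 0$. Likewise $a\le -1$ is excluded not by folding through Serre duality but simply because such $\cE$ restricts negatively to every fibre, hence $\HH^0(X,\cE)=0$.

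The same misconception affects your other cases. For $a=0$, it is false that $\cO_X(bF)$ is ACM for all $b$: one has $\hh^1(X,\cO_X(bF))=\hh^1(\PP^1,\cO_{\PP^1}(b))\ne 0$ for $b\le -2$, and by Serre duality $\hh^1(X,\cO_X(-2H+bF))\ne 0$ for $b\ge d_X$; it is the ACM condition, not initialization, that gives $b\le d_X-1$ (initialization only gives $b\ge 0$, since $\HH^0(X,\cO_X(bF-H))=0$ automatically). For $a=1$, vanishing of the ``sum of two $\PP^1$-terms'' only yields $b\ge -\vartheta-1$, which together with initialization still leaves several values of $b$ (e.g.\ $\cO_X(H-2F)$ on $S(2,2)$ is initialized and has all its two-term sums vanishing, yet is not ACM); what pins down $b=-1$ is the twist $t=-1$, where the bundle becomes $\cO_X(bF)$ and the single term $\hh^1(\PP^1,\cO_{\PP^1}(b))$ forces $b\ge -1$, against $b\le -1$ from initialization. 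Once these points are repaired — ACM $\Leftrightarrow$ $-1\le b\le d_X-1$ for all $a$, then initialization kills $a\le -1$ and $a\ge 2$ and selects $0\le b\le d_X-1$ for $a=0$ and $b=-1$ for $a=1$ — your scheme does prove the lemma, and the Ulrich verification ($\hh^0=\ell+1$ for $\cO_X(\ell F)$, and $\hh^0=\vartheta+(\vartheta+\epsilon)=d_X$ for $\cO_X(H-F)$) stands as written.
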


\subsection{Basic form of ACM bundles on surface scrolls} \label{ab}

Let now $\cE$ be an ACM sheaf on $X$.
We will compute in more detail the resolution
\eqref{esatto!s}.
Set:
  \[
  a_{i,j} = \hh^i(X,\cE(-j F)), \qquad b_{i,j} =
  \hh^i(X,\cE((1-j)F-H)).
  \]
  We also set $a = a_{1,1}$ and $b = b_{1,0}$.  Since $\cE$ is ACM we have $a_{1,0}=b_{1,1}=0$.
  Then, \eqref{Psi} can be broken up into two exact sequences: 
  \begin{align}
    & \label{f0s} 0\to  \cO_X(-F)^{a_{0,1}}  \to \cO_X^{a_{0,0}} \to \pi^* \pi_*\cE \to \cO_X(-F)^{a}  \to 0, \\
    & \label{per Qs}  0 \to \pi^* \bR^1 \pi_*\cE \to
    \cO_X(-F)^{a_{2,1}} \to \cO_X^{a_{2,0}} \to 0.
  \end{align}
  Since $\Theta \Theta^*\cE$ is the cone of \eqref{Thetas}, we read
  its cohomology in the exact sequences:
  \begin{align}
    &\label{g0}  0 \to \cL^{b} \to \cH^0\Theta \Theta^*\cE \to \cL(-F)^{b_{2,1}} \to  \cL^{b_{2,0}} \to 0,\\
    &\label{dominatess}  0 \to \cL(-F)^{b_{0,1}} \to \cL^{b_{0,0}} \to \cH^{-1}\Theta \Theta^*\cE \to 0.    
  \end{align}
  Taking cohomology of \eqref{esatto!s} we get
   the long exact sequence:
  \begin{equation}
    \label{5pezzis}
    0 \to \cH^{-1} \Theta \Theta^*\cE \xr{\varphi} \pi^*  \pi_*\cE \to
    \cE \to \cH^0\Theta \Theta^*\cE   \to \pi^* \bR^1 \pi_*\cE \to 0.
    \end{equation}

    Let $I$ and $J$ be the images of the middle maps of \eqref{f0s}
    and \eqref{g0}, so that:
    \begin{align}
      \label{I} &    0\to \cO_X(-F)^{a_{0,1}} \to \cO_X^{a_{0,0}} \to      I\to 0, \\
      \label{J} &    0 \to J \xr{g_1} \cL(-F)^{b_{2,1}} \xr{g_2} \cL^{b_{2,0}} \to  0. 
    \end{align}
    We claim:
    \[\pi^* \pi_*\cE = I \oplus \cO_X(-F)^{a}, \qquad
    \cH^0\Theta \Theta^*\cE=J \oplus \cL^{b}.\] 
    Indeed,
    looking at \eqref{f0s}, we have to check that $\pi^* \pi_*\cE \to \cO_X(-F)^{a}$
    splits, and 
    it suffices to prove that $\Ext^1_X(\cO_X(-F),I)=0$.
    This in turn is obtained twisting \eqref{I} by $\cO_X(F)$ and computing cohomology.
    In a similar way one proves that $\cL^{b} \to \cH^0\Theta \Theta^*\cE$ splits.

  Now observe that the restriction of $\pi^* \pi_*\cE \to \cE$ to the
  summand $\cO_X(-F)^{a}$ of 
  $\pi^* \pi_*\cE$ is injective. Indeed, its kernel is $\cH^{-1}\Theta
  \Theta^*\cE$, which is
  dominated by the bundle $\cL^{b_{0,0}}$ in view of
  \eqref{dominatess}. But there are no non-trivial maps $\cL \to
  \cO_X(-F)$.
  Similarly, $\cE \to \cH^0\Theta \Theta^*\cE$ is surjective onto $\cL^{b}$. 
  We define thus the sheaves $P$ and $Q$ by the sequences:
  \begin{align}
    \label{Qs} & 0 \to Q \to J \to \pi^* \bR^1 \pi_*\cE \to 0,\\
    \label{Ps} & 0 \to \cH^{-1} \Theta \Theta^*\cE \to I \to P \to 0.
  \end{align}

  Summing up, we have shown the following basic result.

  \begin{lem} \label{lemmetto}
    Let $\cE$ be an ACM bundle on $X$. Then $\cE$ fits into:
    \[
    0 \to P \oplus \cO_X(-F)^{a} \to \cE \to Q
    \oplus \cL^{b}\to 0, 
    \]
    where $P$ and $Q$ fit into  \eqref{Ps} and \eqref{Qs}, and $I$ and
    $J$ fit into \eqref{I} and \eqref{J}.
  \end{lem}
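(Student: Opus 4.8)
The plan is to assemble the claimed four-term sequence out of the long exact cohomology sequence \eqref{5pezzis} of the functorial triangle \eqref{esatto!s}, splicing it at $\cE$ and identifying each half by means of the two direct-sum decompositions
\[
\pi^* \pi_*\cE = I \oplus \cO_X(-F)^{a}, \qquad
\cH^0\Theta \Theta^*\cE = J \oplus \cL^{b}
\]
established above. Writing $K$ for the image of $\pi^* \pi_*\cE \to \cE$, exactness of \eqref{5pezzis} presents $\cE$ as an extension $0 \to K \to \cE \to M \to 0$, where $M$ is the image of $\cE \to \cH^0\Theta\Theta^*\cE$; it therefore suffices to identify $K = P \oplus \cO_X(-F)^{a}$ and $M = Q \oplus \cL^{b}$.

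For the left end I would argue as follows. By \eqref{5pezzis} the map $\varphi$ is injective, so $K = \pi^* \pi_*\cE / \cH^{-1}\Theta\Theta^*\cE$. The key point is that $\cH^{-1}\Theta\Theta^*\cE$ lies inside the summand $I$: by \eqref{dominatess} it is a quotient of a sum of copies of $\cL$, and since $\Hom_X(\cL,\cO_X(-F))=0$ the composite $\cH^{-1}\Theta\Theta^*\cE \to \pi^*\pi_*\cE \to \cO_X(-F)^{a}$ vanishes. Dividing $I \oplus \cO_X(-F)^{a}$ by $\cH^{-1}\Theta\Theta^*\cE \subset I$ then gives $K = P \oplus \cO_X(-F)^{a}$, with $P$ as in \eqref{Ps}.

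Symmetrically, at the right end $M$ is the kernel of $\cH^0\Theta\Theta^*\cE \to \pi^*\bR^1\pi_*\cE$. By \eqref{per Qs} the target embeds into a sum of copies of $\cO_X(-F)$, so again $\Hom_X(\cL,\cO_X(-F))=0$ forces the map to annihilate the summand $\cL^{b}$; restricted to $J$ it is the surjection of \eqref{Qs} with kernel $Q$. Hence $M = Q \oplus \cL^{b}$, and splicing the two halves at $\cE$ yields $0 \to P \oplus \cO_X(-F)^{a} \to \cE \to Q \oplus \cL^{b} \to 0$.

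I expect the substance to lie not in this final splice, which is bookkeeping, but in the two direct-sum decompositions and in the compatibility of the connecting maps with them. The decompositions themselves come from $\Ext^1_X(\cO_X(-F),I)=0$ (and its analogue for $\cL^{b}$), obtained by twisting \eqref{I} and reading off the cohomology of line bundles on $\FF_\epsilon$ from \S\ref{hirzebruch}; the compatibility is exactly the single vanishing $\Hom_X(\cL,\cO_X(-F))=0$, which is what guarantees that $\varphi$ and the last connecting map respect the splittings and hence that the resulting extension genuinely has the split summands $\cO_X(-F)^{a}$ and $\cL^{b}$ rather than nonsplit ones.
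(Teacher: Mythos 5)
Your proof is correct and follows essentially the same route as the paper: splice the five-term sequence \eqref{5pezzis} at $\cE$, use the splittings $\pi^*\pi_*\cE \simeq I \oplus \cO_X(-F)^a$ and $\cH^0\Theta\Theta^*\cE \simeq J \oplus \cL^b$ (via $\Ext^1_X(\cO_X(-F),I)=0$ and its analogue), and invoke $\Hom_X(\cL,\cO_X(-F))=0$ together with \eqref{dominatess} and \eqref{per Qs} to see that $\varphi$ lands in $I$ and that the map to $\pi^*\bR^1\pi_*\cE$ kills $\cL^b$. This is precisely the paper's argument, with your treatment of the right-hand end being, if anything, slightly more explicit than the paper's ``similarly''.
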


\subsection{A splitting criterion for scrolls of low degree}

\label{section splitting}

Let us now assume, for the rest of the section, $\epsilon + \vartheta
\le 3$.
Namely, we focus on the
following scrolls (we display their representation type,
although tameness of quartic scrolls will be apparent from \S \ref{quartic}).
\[
\begin{tabular}[h]{c|c|c}
  finite & tame & wild \\
  \hline
  \hline
  $S(1,1)$ &   $S(2,2)$ &  $S(2,3)$  \\
  \hline
  $S(1,2)$ &   $S(1,3)$ &   $S(3,3)$
\end{tabular}  
\]

\begin{lem} \label{s'annullas}
  Let $\cE$ be ACM on $X$, and $P$ and $Q$ as in Lemma \ref{lemmetto}.
  Then $\Ext^1_X(Q,P)=0$.
\end{lem}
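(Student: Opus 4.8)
The plan is to dismantle $P$ and $Q$ along their defining sequences \eqref{Ps}, \eqref{I}, \eqref{dominatess} for $P$ and \eqref{Qs}, \eqref{J}, \eqref{per Qs} for $Q$, thereby reducing $\Ext^1_X(Q,P)=0$ to the vanishing of $\Ext$-groups among the four line bundles $\cO_X$, $\cO_X(-F)$, $\cL$ and $\cL(-F)$. Every such group is the cohomology of a line bundle of the shape $\cO_X(\pm F)$ or $\cO_X(H-cF)$ with $c\in\{d_X-2,d_X-1,d_X\}$, which one reads off from the formula of \S\ref{hirzebruch}; it is precisely in evaluating these that the hypothesis $\epsilon+\vartheta\le 3$ will be used.

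First I would apply $\Hom_X(Q,-)$ to \eqref{Ps} and then to \eqref{I}, sandwiching $\Ext^1_X(Q,P)$ among $\Ext^1_X(Q,\cO_X)$, $\Ext^2_X(Q,\cO_X(-F))$ and $\Ext^2_X(Q,\cH^{-1}\Theta\Theta^*\cE)$. The two $\Ext^2$-terms are harmless: since $Q\subset\cL(-F)^{b_{2,1}}$ by \eqref{J} and $\Ext^3_X$ vanishes on the surface $X$, any $\Ext^2_X(Q,M)$ is a quotient of $\Ext^2_X(\cL(-F),M)$; taking $M=\cO_X(-F)$, and $M=\cL$ after writing $\cH^{-1}\Theta\Theta^*\cE$ as a quotient of $\cL^{b_{0,0}}$ through \eqref{dominatess}, one is reduced to $\Ext^2_X(\cL(-F),\cO_X(-F))$ and $\Ext^2_X(\cL(-F),\cL)$, which vanish by Serre duality because the relevant dual line bundles have negative $H$-degree.

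The single delicate term is $\Ext^1_X(Q,\cO_X)$, and this is where I expect the main obstacle. Decomposing $Q$ into building blocks is useless here, since one of them is $\cL$ and $\Ext^1_X(\cL,\cO_X)=H^1(X,\cO_X(H-(d_X-1)F))$ need \emph{not} vanish when $\epsilon+\vartheta=3$. The remedy is to keep $Q$ inside $\cL(-F)^{b_{2,1}}$: writing $C$ for the cokernel of $Q\hookrightarrow\cL(-F)^{b_{2,1}}$, the exact sequence $\Ext^1_X(\cL(-F)^{b_{2,1}},\cO_X)\to\Ext^1_X(Q,\cO_X)\to\Ext^2_X(C,\cO_X)$ reduces the claim to $\Ext^1_X(\cL(-F),\cO_X)=H^1(X,\cO_X(H-(d_X-2)F))=0$ --- exactly the cohomology group forced to vanish by $\epsilon+\vartheta\le 3$ --- together with $\Ext^2_X(C,\cO_X)=0$. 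For the last vanishing I would observe that $C$ is an extension of a subsheaf of $\cL^{b_{2,0}}$ by $\pi^* \bR^1 \pi_*\cE$ (combining \eqref{Qs}, \eqref{J} with the image of $g_2$, and \eqref{per Qs}), so that $\Ext^2_X(C,\cO_X)$ is controlled by $\Ext^2_X(\cL,\cO_X)$ and $\Ext^2_X(\cO_X(-F),\cO_X)$, both vanishing by Serre duality. Propagating these vanishings back up the chain yields $\Ext^1_X(Q,P)=0$.
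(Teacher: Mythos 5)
Your proposal is correct and follows essentially the same route as the paper: both apply $\Hom_X(Q,-)$ to \eqref{Ps} and \eqref{I} to reduce the claim to the vanishing of $\Ext^1_X(Q,\cO_X)$, $\Ext^2_X(Q,\cO_X(-F))$ and $\Ext^2_X(Q,\cH^{-1}\Theta\Theta^*\cE)$, and both trace these back through the chain $Q\subset J\subset \cL(-F)^{b_{2,1}}$ (together with \eqref{per Qs} and \eqref{dominatess}) to the single critical vanishing $\Ext^1_X(\cL(-F),\cO_X)=\HH^1(X,\cL^*(F))=0$, which is exactly where $\epsilon+\vartheta\le 3$ enters. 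The only difference is bookkeeping: the paper Serre-dualizes everything into cohomology of twists of $Q$ and walks the chain in two steps, while you stay on the Ext side, collapse the chain into one step via the cokernel $C$, and invoke $\Ext^3_X=0$ on a surface — an equivalent reorganization of the same argument.
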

\begin{proof}
  To show this, we apply $\Hom_X(Q,-)$ to \eqref{Ps}, obtaining:
  \begin{align*}
    \Ext^1_X(Q,I) \to \Ext^1_X(Q,P) \to \Ext^2_X(Q,\cH^{-1} \Theta \Theta^*\cE).
  \end{align*}
  We want to show that the outer terms of this exact sequence
  vanish.
  For the leftmost term, using \eqref{I}, we are reduced to
  show:
  \begin{align}
    \label{ammazzalo 3s} &\Ext^1_X(Q,\cO_X) \simeq \HH^1(X,Q \ts \cL(-F-H))^*=0, \\
    \label{ammazzalos} &\Ext^2_X(Q,\cO_X(-F)) \simeq \HH^0(X,Q \ts \cL(-H))^*=0,
    \intertext{where the isomorphisms are given by Serre duality.
      For the rightmost term, by \eqref{dominatess} we need:}
    \label{ammazzalo 2s} &\Ext^2_X(Q,\cL) \simeq \HH^0(X,Q(-H-F))^*=0.
  \end{align}
  In turn, by \eqref{Qs},   it suffices to show:
    \begin{align}
      \nonumber & \HH^0(X,J \ts \cL(-H)) = \HH^0(X,J(-H-F))=0, &&
      \mbox{for \eqref{ammazzalos}, \eqref{ammazzalo 2s},} \\
      \label{mo basta} 
      & \HH^1(X,J \ts \cL(-F-H)) = \HH^0(X,\pi^*
      \bR^1 \pi_*\cE \ts \cL(-F-H)) =0, && \mbox{for \eqref{ammazzalo 3s}}.
    \end{align}
    The first line follows by taking global sections
    of \eqref{J}, twisted by $\cL(-H)$, or by $\cO_X(-H-F)$.
    Similarly, the first vanishing required for \eqref{mo basta}
    follows from \eqref{J} and Serre duality, since $\HH^2(X,\cL^*)=0$
    and $\HH^1(X,\cL^*(F))=0$
    for $\epsilon + \vartheta \le 3$ (here
    is where the bound on the invariants appears).
    The last vanishing  follows
    taking global sections of \eqref{per Qs}, twisted by $\cL(-F-H)$.
    \end{proof}

  \begin{prop} \label{criterions}
    Any non-zero ACM bundle $\cE$ satisfying:
    \begin{equation}
      \label{thensplitss}
      \HH^1(X,\cE(tH-F)) = 0, \quad \HH^1(X,\cE(tH+F))=0, \quad
      \mbox{for all $t \in  \Z$,} 
  \end{equation}
     splits as a direct
    sum of line bundles.
  \end{prop}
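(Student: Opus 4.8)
The plan is to feed the hypothesis \eqref{thensplitss} into the structural Lemma \ref{lemmetto}. First I would read off the two integers $a$ and $b$ governing that decomposition: since $a=a_{1,1}=\hh^1(X,\cE(-F))$ and $b=b_{1,0}=\hh^1(X,\cE(F-H))$, the instances $t=0$ and $t=-1$ of \eqref{thensplitss} give at once $a=b=0$. Hence Lemma \ref{lemmetto} degenerates to a short exact sequence $0\to P\to\cE\to Q\to 0$, and Lemma \ref{s'annullas} ($\Ext^1_X(Q,P)=0$) forces this sequence to split, so $\cE\simeq P\oplus Q$. It therefore suffices to prove that each of $P$ and $Q$ is a direct sum of line bundles; note that both are direct summands of $\cE$, hence ACM bundles again satisfying \eqref{thensplitss}, so the reduction is legitimate and may be iterated.

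Next I would make the two building blocks explicit. The key observation is that every map occurring in \eqref{f0s}--\eqref{g0} has its entries in $\Hom_X(\cO_X(-F),\cO_X)=\Hom_X(\cL(-F),\cL)=\HH^0(\PP^1,\cO_{\PP^1}(1))$, i.e. it is, up to the twist by $\cL$, the pullback of a map of sheaves on $\PP^1$. Since a torsion-free sheaf on the smooth curve $\PP^1$ is locally free and splits by Grothendieck, the four sheaves $I=\pi^*\pi_*\cE$, $\cH^{-1}\Theta\Theta^*\cE$, $J=\cH^0\Theta\Theta^*\cE$ and $\pi^*\bR^1\pi_*\cE$ are all direct sums of line bundles, of the respective shapes $\bigoplus\cO_X(e_iF)$, $\bigoplus\cL(h_jF)$, $\bigoplus\cL(g_jF)$, $\bigoplus\cO_X(m_iF)$. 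Plugging these into \eqref{Ps} and \eqref{Qs}, I obtain presentations
\[
0\to \textstyle\bigoplus_j\cL(h_jF)\xrightarrow{\Phi}\bigoplus_i\cO_X(e_iF)\to P\to 0,\qquad
0\to Q\to \textstyle\bigoplus_j\cL(g_jF)\xrightarrow{\Psi}\bigoplus_i\cO_X(m_iF)\to 0,
\]
exhibiting $P$ as a cokernel and $Q$ as a kernel of a morphism between two sums of line bundles of the two ``levels'' $\cO_X(\ast F)$ and $\cL(\ast F)$.

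The hard part is then to show that such a locally free cokernel (resp. kernel) is forced to split. My plan is to normalise the matrix $\Phi$ (and dually $\Psi$) using the graded automorphisms of its source and target: the entries of $\Phi$ lie in $\HH^0(X,\cO_X(H+cF))$, and these spaces vanish once $c$ is small enough, which for $\epsilon+\vartheta\le 3$ already pins down a narrow range of admissible degree differences $e_i-h_j$; ordering the summands by degree then puts $\Phi$ in a nearly triangular shape from which a line bundle summand can be peeled off, dropping the rank and allowing an induction whose base case (rank one) is Lemma \ref{linebundles}. The mechanism behind each peeling step is the vanishing $\HH^k(X,\cO_X(bF-H))=0$ for all $k$, which makes the relevant cross-level extension groups between an $\cO_X(\ast F)$ and an $\cL(\ast F)$ summand vanish, so that a split sub- or quotient line bundle can be complemented inside $P$ (resp. $Q$). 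Finally, the self-duality of the hypothesis---if $\cE$ satisfies \eqref{thensplitss} then so does $\cE^*\otimes\cO_X((d_X-2)F)$, and this exchanges the roles of $P$ and $Q$---means I only need to carry out the argument on one side. I expect the genuine obstacle to be precisely this last reduction of the presentation matrix: controlling the cross terms of $\Phi$ and $\Psi$ and verifying that the normalisation terminates in a diagonal form is exactly where the bound $\epsilon+\vartheta\le 3$ is indispensable.
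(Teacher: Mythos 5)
Your first two paragraphs match the paper's own reduction: \eqref{thensplitss} at $t=0$ and $t=-1$ gives $a=b=0$, Lemma \ref{lemmetto} then leaves $0\to P\to\cE\to Q\to 0$, Lemma \ref{s'annullas} splits it, and the building blocks $I$, $J$, $\cH^{-1}\Theta\Theta^*\cE$, $\pi^*\bR^1\pi_*\cE$ do split into line bundles by Grothendieck's theorem on $\PP^1$. The gap is your third paragraph, the one you yourself defer as ``the hard part'': it is a plan rather than a proof, and the statement it aims at is false, because after extracting $a=b=0$ you never invoke \eqref{thensplitss} again. You are in effect claiming that \emph{every} locally free cokernel of an injective map $\bigoplus_j\cL(h_jF)\to\bigoplus_i\cO_X(e_iF)$ splits, using only degree bookkeeping and the bound $\epsilon+\vartheta\le 3$. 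Here is a counterexample on $X=S(2,2)$: note $\cL(-3F)\simeq\cO_X(-H)$, and let $\Phi:\cL(-3F)\to\cO_X^3$ have as components three general sections of $\cO_X(H)$; two general members of $|H|$ meet in $H^2=4$ points and a general third member avoids them, so $\Phi$ vanishes nowhere and $P=\coker(\Phi)$ is locally free of rank $2$ (twisting by $\cO_X(3F)$ puts the presentation in the shape with non-negative twists, as in \eqref{Isplits}). Now $P$ is a quotient of $\cO_X^3$, hence globally generated, with $c_1(P)=H$ and $\chi(P)=3\chi(\cO_X)-\chi(\cO_X(-H))=3$; but a direct sum of two globally generated line bundles with $c_1=H$ is one of $\cO_X\oplus\cO_X(H)$, $\cO_X(F)\oplus\cO_X(H-F)$, $\cO_X(2F)\oplus\cO_X(H-2F)$, with $\chi=7,6,5$ respectively. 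So $P$ does not split, even though no ``normalisation'' of $\Phi$ is possible at all: the only row and column operations available here are constant matrices. Hence the splitting of $P$ and $Q$ genuinely cannot be read off the shape of their presentations.

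The missing idea --- and it is the actual content of the paper's proof --- is a second application of \eqref{thensplitss}. First normalise $\cE$ to be initialized (the hypothesis is twist-invariant); then $b_{0,1}=\hh^0(X,\cE(-H))=0$, so \eqref{dominatess} gives $\cH^{-1}\Theta\Theta^*\cE\simeq\cL^{b_{0,0}}$, an \emph{untwisted} power of $\cL$. Twisting \eqref{Ps} by $\cO_X(-H-F)$, the middle term $\bigoplus_i\cO_X((i-1)F-H)^{p_i}$ has no cohomology in any degree, while $\cL(-H-F)\simeq\omega_X$, so $\HH^1(X,P(-H-F))\simeq\HH^2(X,\omega_X)^{b_{0,0}}=\bk^{b_{0,0}}$. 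Since $P$ is a direct summand of $\cE$ and $\HH^1(X,\cE(-H-F))=0$ by \eqref{thensplitss}, this forces $b_{0,0}=0$, hence $P\simeq I$ splits by \eqref{Isplits}. The other summand $Q$ is then handled by induction on the rank --- which is also where your ``iterate the reduction'' step needs repair, since iterating $\cE\mapsto(P,Q)$ by itself has no termination guarantee: progress is ensured because $\HH^0(X,Q)=0$ while $\cE$ is initialized, so $P\ne 0$ and $\rk(Q)<\rk(\cE)$.
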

  \begin{proof}
    We borrow the notation from  the above discussion,
    and we assume $a=b=0$.
    First note that the sheaves $I$ and $\cH^{-1} \Theta \Theta^*\cE$ are
    torsion-free and in fact locally free, since they are obtained as
    pull-back of direct images of $\cE$ and 
    $\cE(-H)$ via $\pi$.
    More precisely, by \eqref{I}, there are integers $r \ge 1$
    and $p_i \ge 0$ such that:
    \begin{equation}
      \label{Isplits}
    I \simeq \bigoplus_{i=0,\ldots, r}\cO_X(iF)^{p_i}.
    \end{equation}

    We can assume that $\cE$ is initialized, hence
    $\HH^0(X,\cE(-H))=0$.
    We get:
    \begin{align}
    \label{-1,1s} &\cH^{-1} \Theta \Theta^*\cE \simeq \cL^{b_{0,0}}.
    \end{align}

    In view of the previous lemma, we have that $\cE$ is the direct
    sum of $P$ and $Q$.
    Since $\HH^0(X,Q)=0$, to conclude, it remains to prove that $P$ is
    a (possibly zero) direct sum of line 
    bundles. Indeed, if $P \ne 0$, we may split off $P$ from $\cE$ and
    use induction on 
    the rank to get out statement; on the other hand $P = 0$ leads to a
    contradiction since $\HH^0(X,\cE=\ne 0$ and $\HH^0(X,Q)=0$.
    Using \eqref{-1,1s}, the exact sequence \eqref{Ps} becomes:
    \[
    0 \to \cL^{b_{0,0}} \to \bigoplus_{i=0,\ldots, r}\cO_X(iF)^{p_i} \to    P \to 0.
    \]
    Twisting this sequence by $\cO_X(-H-F)$, since
    $\HH^k(X,\cO_X((i-1)F-H))=0$ for any $i$ and any $k$, we get 
    $\HH^1(X,P(-H-F)) \simeq \bk^{b_{0,0}}$.
    But this space must be zero, since $P$ is a direct
    summand of $\cE$, and $\cE$ satisfies \eqref{thensplitss}.
    We deduce
    $P \simeq \bigoplus_{i=1,\ldots, r}\cO_X(iF)^{p_i}$.
  \end{proof}

This allows to give the following refinement of Lemma \ref{lemmetto}.

\begin{lem} \label{soloO}
  Let $\cE$ be an indecomposable ACM bundle on $X$, assume $\vartheta
  \ge 2$ and let
  $I$, $J$, $P$, $Q$, $a_{i,j}$ and $b_{i,j}$ be as above.
  If $P \ne 0$ and $b \ne 0$, then $\vartheta + \epsilon = 3$ and $P \simeq I \simeq \cO_X^{a_{0,0}}$.
\end{lem}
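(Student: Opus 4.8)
The plan is to read the indecomposable summands of $I$ and of $\cH^{-1}\Theta\Theta^*\cE$ off the basic resolution of Lemma~\ref{lemmetto}, and then to let the vanishing of Lemma~\ref{s'annullas} collide with the indecomposability of $\cE$. First I would note that $\pi_*\cE$ is torsion-free, hence locally free, on $\PP^1$, so $\pi^*\pi_*\cE$ is a direct sum of line bundles $\cO_X(iF)$. Since by \eqref{I} the sheaf $I$ is a quotient of $\cO_X^{a_{0,0}}$ and (by Lemma~\ref{lemmetto}) a direct summand of $\pi^*\pi_*\cE$, Krull--Schmidt forces $I\simeq\bigoplus_{i\ge 0}\cO_X(iF)^{p_i}$, with no negative twists as these carry no global sections. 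Dually, \eqref{dominatess} presents $\cH^{-1}\Theta\Theta^*\cE$ as $\cL\otimes M$ with $M$ pulled back from $\PP^1$; in particular every indecomposable summand of $\cH^{-1}\Theta\Theta^*\cE$ involves the class $-H$, whereas no summand of $I$ does.

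The computational input is the $\Hom$/$\Ext$ dictionary between $\cL$ and $\cO_X(iF)$. Using the cohomology formula of \S\ref{hirzebruch} and Serre duality I would record: $\Hom_X(\cL,\cO_X(iF))=0$ for all $i<\vartheta-1$; $\Ext^1_X(\cL,\cO_X(iF))\ne 0$ exactly when $i\le\vartheta+\epsilon-3$; $\Ext^2_X(\cL,\cH^{-1}\Theta\Theta^*\cE)=0$ (from \eqref{dominatess} and $\HH^2(X,\cO_X)=0$); and $\Hom_X(\cO_X(iF),\cL)=0$ for all $i$, since $\cL(-iF)=\cO_X((d_X-1-i)F-H)$ has vanishing cohomology. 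Because $\vartheta\ge 2$ and $\vartheta+\epsilon\le 3$, the first two ranges are disjoint, so the behaviour of a summand $\cO_X(iF)$ of $I$ towards $\cL$ is governed solely by the twist $i$.

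The engine is the following splitting principle: as $\cE$ is indecomposable, no nonzero direct summand $S$ of $A=P\oplus\cO_X(-F)^a$ can have $\Ext^1_X(B,S)=0$ with $B=Q\oplus\cL^b$, for otherwise $S$ would split off $\cE$, giving $\cE\simeq S$ and hence (since $\cE\twoheadrightarrow\cL^b$ while $\Hom_X(S,\cL)=0$ for any such $S$, being a quotient of a sum of $\cO_X(iF)$) the contradiction $b=0$. For a summand $S$ of $P$, Lemma~\ref{s'annullas} reduces $\Ext^1_X(B,S)=0$ to $\Ext^1_X(\cL,S)=0$. If $\vartheta+\epsilon\le 2$ then $\Ext^1_X(\cL,\cO_X(iF))=0$ for every $i\ge 0$, whence $\Ext^1_X(\cL,I)=0$ and, via \eqref{Ps} together with $\Ext^2_X(\cL,\cH^{-1}\Theta\Theta^*\cE)=0$, also $\Ext^1_X(\cL,P)=0$; so $P$ itself splits off, a contradiction. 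This forces $\vartheta+\epsilon=3$. Granting this, the image of $\cH^{-1}\Theta\Theta^*\cE\to I$ lands in $I_{\ge\vartheta-1}:=\bigoplus_{i\ge\vartheta-1}\cO_X(iF)^{p_i}$, so $P\simeq\bigl(\bigoplus_{0\le i\le\vartheta-2}\cO_X(iF)^{p_i}\bigr)\oplus P'$ with $P'=\coker(\cH^{-1}\Theta\Theta^*\cE\to I_{\ge\vartheta-1})$. Each summand with $1\le i\le\vartheta-2$ has $\Ext^1_X(\cL,\cO_X(iF))=0$ (as $i\ge 1>\vartheta+\epsilon-3=0$) and so must vanish; and $P'$ has $\Ext^1_X(\cL,P')=0$, whence $P'=0$ too. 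Therefore $\cH^{-1}\Theta\Theta^*\cE\xrightarrow{\sim}I_{\ge\vartheta-1}$; since the source is built from $\cL(jF)$ and the target from $\cO_X(iF)$, Krull--Schmidt forces both to vanish. Thus $P\simeq I\simeq\cO_X^{p_0}$, and feeding this into \eqref{I} (using $\HH^0(X,\cO_X(-F))=\HH^1(X,\cO_X(-F))=0$) gives $p_0=a_{0,0}$ and $a_{0,1}=0$, i.e. $P\simeq I\simeq\cO_X^{a_{0,0}}$.

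I expect the main obstacle to be the bookkeeping of the third paragraph: decomposing $P$ correctly according to twist, and in particular the step showing $P'=0$ and then that the resulting isomorphism $\cH^{-1}\Theta\Theta^*\cE\simeq I_{\ge\vartheta-1}$ is impossible unless both sides vanish. This is where the asymmetry ``summands of $I$ never involve $-H$, summands of $\cH^{-1}\Theta\Theta^*\cE$ always do'' is essential, and it is precisely what simultaneously pins the geometry down to $\vartheta+\epsilon=3$ and trivializes $I$.
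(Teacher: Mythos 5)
Your proof is correct and follows essentially the same route as the paper's: the same splittings $I\simeq\bigoplus_i\cO_X(iF)^{p_i}$ and $\cH^{-1}\Theta\Theta^*\cE\simeq\bigoplus_j\cL(b_jF)$, the vanishing $\Hom_X(\cL(b_jF),\cO_X)=0$ to decompose $P$, then Lemma~\ref{s'annullas} plus indecomposability to kill the extra summands, and finally the Krull--Schmidt clash between summands involving $-H$ and those that do not. The only differences are organizational (you extract $\vartheta+\epsilon=3$ up front via the case $\vartheta+\epsilon\le 2$, and split $I$ at twist $\vartheta-1$ rather than at twist $1$), which do not change the substance of the argument.
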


\begin{proof}
As in the proof of Proposition \ref{criterions}, $I$ takes the form
 \eqref{Isplits}. Set $p=p_0$ and rewrite $I$ as:
\[
I = \cO_X^{p} \oplus I', \qquad I'= \bigoplus_{i=1,\ldots,r}\cO_X(iF)^{p_i}.
\]
This time, by \eqref{dominatess} we get a
splitting of
$\cH^{-1} \Theta \Theta^* \cE$ of the form:
\begin{equation}
  \label{hmenouno}
\cH^{-1} \Theta \Theta^* \cE \simeq \bigoplus_{i=1,\ldots, s}\cL(b_iF),
\end{equation}
for some integers $s \ge 0$, $b_i \ge 0$.

We now observe that the map $\varphi$ appearing in \eqref{5pezzis} is
zero, when restricted to the summand $\cO_X^p$ of $I$, because
$\HH^0(X,\cL^*)=0$ when $\vartheta \ge 2$.
Therefore, $P = \cO_X^p \oplus P'$, where $P'$ fits
into:
\begin{equation}
  \label{eq:I'}
0 \to \cH^{-1} \Theta \Theta^* \cE \to I' \to P' \to 0.  
\end{equation}

Next, we use that $\cE$ is indecomposable. 
We tensor the previous exact sequence with $\cL^*$ and note that $i
\ge 1$ implies $\HH^1(X,I' \ts \cL^*))=0$, while \eqref{hmenouno} easily
gives $\HH^2(X,\cH^{-1} \Theta \Theta^* \cE \ts \cL^*)=0$.
Therefore $\HH^1(X,P'\ts \cL^*)=0$. 
In view of Lemma
\ref{s'annullas}, since $P = P' \oplus \cO_X^p$ and $\cE$ is
indecomposable, this implies $P'=0$ and $\HH^1(X,\cL^*) \ne 0$. This says
$\vartheta + \epsilon = 3$.
Now \eqref{eq:I'} implies that $\cH^{-1} \Theta \Theta^* \cE$ and $I'$
are both zero (indeed, they should be isomorphic, but they are direct sums
of line bundles with different
coefficients of $H$), so $P \simeq I \simeq \cO_X^{a_{0,0}}$.
\end{proof}

An essentially identical argument shows that, if $\cE$ is
indecomposable ACM with $Q
\ne 0$ and $a \ne 0$, then $Q \simeq J \simeq \cL(-F)^{b_{2,1}}$
and $\vartheta + \epsilon = 3$.
We have proved:

\begin{prop} \label{prop-abcd}
  Let $\epsilon + \vartheta \le 3$, $\vartheta \ge 2$ and $\cE$ be an
  indecomposable ACM bundle on $X$. Set:
\[
a=a_{1,1}, \quad b=b_{1,0}, \quad c=a_{0,0}, \quad d=b_{2,1}.  
\]
Then $\cE$ fits into:
\[
0 \to \cO_X^\cc \oplus \cO_X(-F)^\aaa \to \cE \to \cL^\bb
\oplus \cL(-F)^\dd \to 0.
\]
\end{prop}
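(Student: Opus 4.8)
The plan is to refine Lemma~\ref{lemmetto}. That lemma already exhibits $\cE$ as an extension $0 \to P \oplus \cO_X(-F)^{a} \to \cE \to Q \oplus \cL^{b} \to 0$, so the whole statement reduces to identifying the two a priori unknown bundles as $P \simeq \cO_X^{c}$ and $Q \simeq \cL(-F)^{d}$, where $c=a_{0,0}$ and $d=b_{2,1}$; the displayed sequence is then exactly the one of Lemma~\ref{lemmetto}. I write $M = P \oplus \cO_X(-F)^a$ and $N = Q \oplus \cL^b$ for the sub- and the quotient bundle. The arguments for $P$ and for $Q$ are mirror images, each a mild extension of Lemma~\ref{soloO} (respectively of the dual statement recorded just above the proposition), so I detail only $P$.

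First I would record, as in the proofs of Proposition~\ref{criterions} and Lemma~\ref{soloO}, that $I$ and $\cH^{-1}\Theta\Theta^*\cE$ are pulled back from $\PP^1$ up to the respective twists by $\cO_X$ and by $\cL$, so that $I \simeq \bigoplus_{i \ge 0}\cO_X(iF)^{p_i}$ and $\cH^{-1}\Theta\Theta^*\cE \simeq \bigoplus_{j}\cL(b_j F)$ with $p_i, b_j \ge 0$ (this is \eqref{hmenouno}). If $P=0$, then \eqref{Ps} gives $\cH^{-1}\Theta\Theta^*\cE \simeq I$; since the two sides are sums of line bundles with different coefficients of $H$ they must both vanish, so $I=0$ and $c=a_{0,0}=0$, as wanted. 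Otherwise, since $\vartheta \ge 2$ forces $\HH^0(X,\cL^*) = 0$, the inclusion $\cH^{-1}\Theta\Theta^*\cE \hookrightarrow I$ of \eqref{Ps} kills the trivial part $\cO_X^{p_0}$ of $I$, giving $P \simeq \cO_X^{p_0} \oplus P'$ with $P'$ the cokernel in \eqref{eq:I'}. Taking cohomology of \eqref{eq:I'} tensored by $\cL^*$ yields $\HH^1(X, P' \ts \cL^*) = 0$ (here $\epsilon+\vartheta \le 3$ enters, through the vanishing of $\HH^1(X,I'\ts\cL^*)$), hence $\Ext^1_X(\cL^b, P') = \HH^1(X, P' \ts \cL^*)^{b} = 0$; combined with $\Ext^1_X(Q, P') = 0$ from Lemma~\ref{s'annullas}, this gives $\Ext^1_X(N, P') = 0$. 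Therefore $P'$ splits off $\cE$ as a direct summand, the complementary summand being the extension of $N$ by $\cO_X^{p_0} \oplus \cO_X(-F)^a$.

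At this point indecomposability finishes the job, provided $N \ne 0$: the complementary summand surjects onto $N$, so it is nonzero, and indecomposability of $\cE$ forces $P' = 0$. Then $I' \simeq \cH^{-1}\Theta\Theta^*\cE$, and the $H$-coefficient comparison again makes both vanish, so $I = \cO_X^{p_0}$, whence $a_{0,1}=0$, $p_0 = a_{0,0}$ and $P \simeq \cO_X^{c}$. The mirror argument, using $J$, $\HH^2$ and the dual of Lemma~\ref{soloO}, gives $Q \simeq \cL(-F)^{d}$ whenever $M \ne 0$. It remains only to dispose of the degenerate cases $N=0$ and $M=0$: if $N=0$ then $\cE \simeq M = P \oplus \cO_X(-F)^a$ is indecomposable, hence a single line bundle, and symmetrically for $M=0$; these are pinned down by Lemma~\ref{linebundles} and form the base case. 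The step I expect to be delicate is precisely this splitting-off of $P'$ (and $Q'$): it is where the cohomological vanishings behind Lemma~\ref{s'annullas} are genuinely used, together with $\vartheta \ge 2$ to peel off the trivial summand $\cO_X^{p_0}$ and with indecomposability to kill the residual piece.
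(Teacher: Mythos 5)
Your treatment of the non-degenerate cases is sound and is essentially the paper's own argument: the splitting $P \simeq \cO_X^{p_0} \oplus P'$ via $\HH^0(X,\cL^*)=0$ (where $\vartheta \ge 2$ enters), the vanishing $\HH^1(X,P'\ts\cL^*)=0$ extracted from \eqref{eq:I'} (where $\epsilon+\vartheta\le 3$ enters), and the splitting-off of $P'$ using Lemma \ref{s'annullas} reproduce the proof of Lemma \ref{soloO}, mirrored for $Q$. Your relaxation of its hypothesis from $b\ne 0$ to $N\ne 0$ is a small but genuine improvement: it covers, e.g., the case $P\ne 0$, $b=0$, $Q\ne 0$, which the paper's assembly of Lemma \ref{soloO} and its dual leaves untouched.

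The gap is your final step, the ``degenerate cases'', and it cannot be repaired as written. A minor point first: when $N=0$, indecomposability only yields $\cE\simeq\cO_X(-F)$ or $\cE\simeq P$, and nothing you proved shows that the quotient $P$ of $I$ by $\cH^{-1}\Theta\Theta^*\cE$ is then a line bundle. The fatal point is that line bundles do not ``form the base case'': the statement is false for them. Take $X=S(2,2)$ and $\cE=\cO_X(F)$, an initialized indecomposable ACM line bundle by Lemma \ref{linebundles}. All the $b_{i,j}$ vanish (they are cohomologies of twists of the form $\cO_X(bF-H)$), so $Q=0$, $b=0$, $\cH^{-1}\Theta\Theta^*\cE=0$, and $a=\hh^1(X,\cO_X)=0$, while $P=I=\pi^*\pi_*\cE\simeq\cO_X(F)$ and $c=a_{0,0}=\hh^0(X,\cO_X(F))=2$; the asserted sequence would read $0\to\cO_X^2\to\cO_X(F)\to 0\to 0$, which is absurd already on ranks. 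The same failure occurs for every $\cO_X(\ell F)$ with $1\le\ell\le d_X-1$ and for $\cO_X(H-F)$, i.e.\ exactly on your degenerate locus $M=0$ or $N=0$. This is also why the paper never argues this case: its proof of the Proposition is the concatenation of Lemma \ref{lemmetto}, Lemma \ref{soloO} (hypotheses $P\ne0$ \emph{and} $b\ne0$) and the dual statement (hypotheses $Q\ne0$ and $a\ne0$), so the identifications $P\simeq\cO_X^c$ and $Q\simeq\cL(-F)^d$ only come with non-degeneracy caveats, and the sporadic ACM line bundles of Lemma \ref{linebundles} survive as genuine exceptions, handled separately in the applications (cf.\ the proof of Theorem \ref{quartic-is-tame} and the monad construction of \S \ref{monadology}). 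Your write-up should carry the same caveats, or list these line bundles as explicit exceptions; as it stands, the claimed base case is precisely where the literal statement breaks down.
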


\subsection{Monads}

\label{monadology}

Let $\epsilon + \vartheta \le 3$ and $\vartheta \ge 2$. Given an indecomposable ACM bundle $\cE$ on $X=S(\vartheta,\vartheta+\epsilon)$,
by Proposition \ref{prop-abcd} we can consider
the kernel $\cK$ of the natural projection $\cE \to \cL(-F)^{d}$,
and the cokernel $\cC$  of the natural injection $\cO_X^{c} \to
\cE$.
This injection factors through $\cK$ and we let
$\cF$ be cokernel of the resulting map. We will see in a minute that $\cF$ is an Ulrich bundle.
We have thus a complex whose cohomology is $\cF$ (a {\it monad})
of the form:
\[
0 \to \cO_X^{c} \to \cE \to \cL(-F)^{d} \to 0.
\]
The {\it display} of the monad is the following commutative exact diagram:
\begin{equation}
  \label{diagram-monad}
  \xymatrix@R-3.5ex@C-3ex{& 0 \ar[d] & 0 \ar[d]\\
    & \cO_X^\cc \ar@{=}[r] \ar[d] & \cO_X^\cc \ar[d]  \\
    0 \ar[r] & \cK \ar[r]  \ar[d]  & \cE \ar[r] \ar[d] & \cL(-F)^\dd \ar@{=}[d]\ar[r] & 0\\
    0 \ar[r] & \cF \ar[r] \ar[d] & \cC \ar[r] \ar[d] & \cL(-F)^\dd \ar[r] & 0\\
    & 0 & 0 }
\end{equation}

Note that some monads  can also be constructed when $\vartheta=1$ 
cf.  the following example.

\begin{eg} \label{V-W}
  Let $X=S(1,3)$, and observe that $\hh^1(X,\cL^*)=1$. Define the rank-$2$
  bundle $\cV$ as the non-trivial extension:
  \[
  0 \to \cO_X  \to \cV \to \cL \to 0.
  \]
  So in this case $c=b=1$ and $a=d=0$.
  Note that $\cV$ is slope-unstable, and of course ACM.
  It is clear that $\HH^1(X,\cV^*)=0$, since $1 \in \HH^0(X,\cO_X)$ is
  sent by the boundary map to the generator of $\HH^1(X,\cL^*)$.
  Also,  $\cV \ts \cV^*$ fits into:
  \[
  0 \to \cV^* \to \cV \ts \cV^* \to \cV \to 0,
  \]
  and it easily follows that $\Ext^1_X(\cV,\cV)=0$, so $\cV$ is rigid
  (albeit not simple, since $\hom_X(\cV,\cV)=2$).
  Taking $\cV(-F)$, we get a monad with $a=d=1$ and $b=c=0$.

  However, for $X=S(1,3)$, Proposition \ref{prop-abcd} does not always
  apply. Indeed, observe:
  \[
  \ext^1_X(\cL,\cO_X(H-F))=\hh^1(X,\cO_X(2H-4F))=1,
  \]
  and define the bundle $\cW$ as fitting in the associated
  non-trivial extension:
  \[
  0 \to \cO_X(H-F) \to \cW \to \cL \to 0.
  \]
  Clearly $\cW$ is an initialized ACM sheaf on $X$. Also, the displayed
  extension is Harder-Narasimhan filtration of $\cW$, which shows that
  $\cW$ is indecomposable. Just as for $\cV$, one checks that $\cW$ is rigid.
\end{eg}

\section{Parametrizing Ulrich bundles via the Kronecker quiver}

\label{section kronecker}

Let again $X$ be the scroll $S(\vartheta,\vartheta+\epsilon)$  
of degree $d_X = 2 \vartheta + \epsilon$.
We will carry out here a description of Ulrich bundles on $X$ as
extensions, and relate them to representations of a Kronecker quiver.

\subsection{Ulrich bundles as extensions}
The first consequence of the computation we carried out in the previous
section is the next result, which proves part \eqref{parte_estensioni} of Theorem
\ref{main-wild}.

  \begin{prop} \label{ulrichprop}
    A vector bundle $\cE$ on $X$ is Ulrich iff,
    up to a twist, it fits into:
    \begin{equation}
      \label{ulrich-extension}
    0 \to \cO_X(-F)^a \to \cE \to \cL^b \to 0,
    \end{equation}
    with $a= \hh^1(X,\cE(-F))$ and $b = \hh^1(X,\cE(F-H))$.
  \end{prop}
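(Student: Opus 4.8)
The plan is to read the statement off the structural Lemma \ref{lemmetto} once the two ``extra'' summands $P$ and $Q$ appearing there are shown to vanish, which happens exactly under the Ulrich normalization. Throughout I use that the Ulrich property is invariant under twist, so I may replace $\cE$ by any twist; I normalize so that $\cE(H)$ is the initialized Ulrich bundle. By the standard cohomological characterization of Ulrich sheaves on a surface (namely $\HH^\bullet(X,\cG(-pH))=0$ for $p=1,2$, where $\cG=\cE(H)$), this normalization is equivalent to the two conditions $\HH^\bullet(X,\cE)=0$ and $\HH^\bullet(X,\cE(-H))=0$. Since Ulrich bundles are ACM, Lemma \ref{lemmetto} applies and presents $\cE$ as an extension of $Q\oplus\cL^{b}$ by $P\oplus\cO_X(-F)^{a}$, with $a=a_{1,1}=\hh^1(X,\cE(-F))$ and $b=b_{1,0}=\hh^1(X,\cE(F-H))$.

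For the ``only if'' direction I would extract from the two vanishings only what is needed. The condition $\HH^\bullet(X,\cE)=0$ gives $a_{0,0}=\hh^0(X,\cE)=0$; substituting into \eqref{I} forces $I=0$, whence the defining sequence \eqref{Ps} yields $P=0$ (and $\cH^{-1}\Theta\Theta^*\cE=0$). Dually, $\HH^\bullet(X,\cE(-H))=0$ gives $b_{2,1}=\hh^2(X,\cE(-H))=0$; substituting into \eqref{J} forces $J=0$, whence \eqref{Qs} yields $Q=0$. With $P=Q=0$, Lemma \ref{lemmetto} collapses to the asserted sequence $0\to\cO_X(-F)^{a}\to\cE\to\cL^{b}\to0$, the exponents being exactly those recorded in that lemma.

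For the ``if'' direction, suppose $\cE$ sits in such a sequence. Twisting by $H$ presents $\cE(H)$ as an extension of $\cL(H)=\cO_X((d_X-1)F)$ by $\cO_X(H-F)$, both initialized Ulrich line bundles by Lemma \ref{linebundles}. As Ulrich bundles are closed under extension (the vanishing characterization passes through the cohomology long exact sequence), $\cE(H)$ is Ulrich, hence so is $\cE$; concretely, $\HH^\bullet(X,\cE)=\HH^\bullet(X,\cE(-H))=0$ reduce to the vanishings $\HH^\bullet(X,\cO_X(bF-H))=0$ and $\HH^\bullet(X,\cO_X(-F))=0$ recalled in \S\ref{hirzebruch} (the term $\cL(-H)$ handled via Serre duality). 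To recover the exponents I would twist the original sequence by $\cO_X(-F)$ and by $\cO_X(F-H)$ and take cohomology: since $\cL(-F)=\cO_X((d_X-2)F-H)$ and $\cO_X(-H)$ have vanishing cohomology while $\hh^1(X,\cO_X(-2F))=\hh^1(X,\cL(F-H))=1$ (again by Serre duality), one reads off $\hh^1(X,\cE(-F))=a$ and $\hh^1(X,\cE(F-H))=b$.

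The one genuinely delicate point is the bookkeeping of the normalizing twist: because ``Ulrich'' is twist-invariant whereas the target sequence has the rigid shape involving $\cO_X(-F)$ and $\cL$, I must check that it is precisely the representative with $\cE(H)$ initialized Ulrich that realizes this shape, and that $\HH^0(X,\cE)$ and $\HH^2(X,\cE(-H))$ (i.e. $a_{0,0}$ and $b_{2,1}$) are the exact obstructions to the vanishing of $P$ and $Q$. Everything else is routine line-bundle cohomology on the scroll.
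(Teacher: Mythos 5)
Your proposal is correct and follows essentially the same route as the paper: both directions rest on Lemma \ref{lemmetto} together with the Eisenbud--Schreyer--Weyman cohomological characterization of Ulrich sheaves, which under the same normalization gives $a_{0,0}=b_{2,1}=0$, hence $I=J=0$ by \eqref{I}, \eqref{J} and $P=Q=0$ by \eqref{Ps}, \eqref{Qs}; the converse likewise uses Lemma \ref{linebundles}, closure of the Ulrich condition under extensions, and the same twists by $\cO_X(-F)$ and $\cO_X(F-H)$ to identify $a$ and $b$. The only cosmetic difference is that you spell out the line-bundle cohomology computations that the paper leaves implicit.
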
 

\begin{proof}
Let us borrow notations from \S \ref{ab}.
By Lemma \ref{linebundles}, if $\cE$ fits into
\eqref{ulrich-extension} for some $a$ and $b$, then it is obviously
Ulrich.
Also, in this case the integers $a$ and $b$ equal $a_{1,1}$ and,
respectively, $b_{1,0}$ as one sees by tensoring
\eqref{ulrich-extension} with $\cO_X(-F)$ and $\cO_X(F-H)$ and taking
cohomology.

Conversely, if $\cE$ is Ulrich,
then (up to a twist) we may assume
$a_{0,0}=b_{2,1}=0$ by \cite[Proposition 2.1]{eisenbud-schreyer-weyman}.
Therefore, by \eqref{I} and \eqref{J} we get $I=J=0$ hence
$P=Q=0$ by \eqref{Ps} and \eqref{Qs}.
We conclude by Lemma \ref{lemmetto}.
\end{proof}

\begin{rmk} \label{occhioallerrore}
  We have $\ext^1_X(\cL,\cO_X(-F))>2$ for all $d_X > 4$.
  Then, extensions of the form \eqref{ulrich-extension} provide
  arbitrarily large families of Ulrich bundles, except for
  $d_X \le 4$. This is the argument of \cite{miro-roig:scrolls}, which
  overlooks the case $d_X=4$ only.
\end{rmk}

Recall that Ulrich bundles are semistable. Then, their
moduli space is a subset of Maruyama's moduli space of $S$-classes
of semistable sheaves (with respect to $H$) of fixed Hilbert
polynomial.

\begin{corol}
  For any integer $r \ge 1$, the moduli space of Ulrich bundles of
  rank $r$ is supported at $r+1$ distinct points, characterized by $c_1(\cE)$.
\end{corol}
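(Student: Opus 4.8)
The plan is to realize the $r+1$ points as the $S$-equivalence classes of the split Ulrich bundles $\cO_X(-F)^a \oplus \cL^b$ with $a+b=r$, and to show that these exhaust the support. First I would invoke Proposition \ref{ulrichprop}: in the normalization fixed by that extension, every rank-$r$ Ulrich bundle $\cE$ sits in a short exact sequence $0 \to \cO_X(-F)^a \to \cE \to \cL^b \to 0$ with $a+b=r$. Taking first Chern classes gives $c_1(\cE) = -bH + (b(d_X-1)-a)F$; the coefficient of $H$ equals $-b$, so as $b$ runs over $\{0,1,\dots,r\}$ we obtain $r+1$ pairwise distinct classes. This already bounds the number of values of $c_1$ attained on rank-$r$ Ulrich bundles by $r+1$.

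Next I would check that all these bundles lie in a single Maruyama moduli space $M$, i.e. that they share one Hilbert polynomial. Since $F\cdot H=1$, $H^2=d_X$ and $\omega_X = \cO_X((d_X-2)F-2H)$, a Riemann--Roch computation gives $(-F)\cdot H = ((d_X-1)F-H)\cdot H = -1$ and likewise $(-F)^2-(-F)\cdot K_X = ((d_X-1)F-H)^2 - ((d_X-1)F-H)\cdot K_X = -2$; hence the two line bundles $\cO_X(-F)$ and $\cL$ have the same Hilbert polynomial, and so does each extension $\cE$, namely $r$ times that of the line bundle. In particular $\cE$, $\cO_X(-F)$ and $\cL$ all have $H$-slope $-1$.

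The heart of the argument is then the collapse of these bundles to $r+1$ points. Ulrich bundles are semistable, so a point of $M$ is the $S$-equivalence class of some Ulrich $\cE$, determined by its Jordan--H\"older factors. As $\cO_X(-F)$ and $\cL$ are line bundles of slope $-1$ they are stable, and the defining sequence exhibits a filtration of $\cE$ with graded pieces $\cO_X(-F)$ (with multiplicity $a$) and $\cL$ (with multiplicity $b$), all of slope $-1$; this is a Jordan--H\"older filtration, so $\cE$ is $S$-equivalent to the polystable bundle $\cO_X(-F)^a \oplus \cL^b$. Moreover for $r\ge 2$ no stable Ulrich bundle arises, since the sequence always produces a proper subsheaf of slope $-1$ (or forces $\cE$ to split). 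Conversely each $\cO_X(-F)^a \oplus \cL^b$ is a genuine semistable Ulrich bundle, hence a point of $M$, and the $r+1$ resulting classes are distinct because they are separated by $c_1$. Thus $M$ is supported at exactly $r+1$ points, characterized by $c_1(\cE)$.

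I expect the one genuinely delicate point to be the $S$-equivalence step: one must be certain that the sub and quotient of the defining extension have the same reduced Hilbert polynomial as $\cE$, so that the extension really is a Jordan--H\"older filtration and the semistable class collapses to the split bundle. This is precisely what the slope and Hilbert-polynomial computation of the second step secures; the counting and existence assertions are then routine.
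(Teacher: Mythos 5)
Your proof is correct and follows essentially the same route as the paper: both invoke Proposition \ref{ulrichprop} to present a rank-$r$ Ulrich bundle as an extension of $\cL^b$ by $\cO_X(-F)^a$, observe that these two line bundles share the same Hilbert polynomial, and conclude that the associated graded ($S$-equivalence class) is $\cO_X(-F)^a \oplus \cL^{r-a}$, determined by $a$, equivalently by $c_1(\cE)$. You merely make explicit what the paper leaves implicit — the Riemann--Roch check, the Jordan--H\"older argument, the distinctness of the $r+1$ classes, and the existence of each split representative — all of which are fine.
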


\begin{proof}
  If $\cE$ is an Ulrich bundle of rank $r$, then by 
  Proposition \ref{ulrichprop} $\cE$ fits into an exact sequence of the form
  \eqref{ulrich-extension}, with $a_{1,1}+b_{1,0}=r$.
  Set $a=a_{1,1}$, so $b_{0,1}=r-a$.
  Since $\cO_X(-F)$ and $\cL$ have the same Hilbert
  polynomial, the graded object associated with $\cE$ is 
   $\cO_X(-F)^{a} \oplus \cL^{r-a}$.
  Hence the
  $S$-equivalence class of $\cE$ only depends on $a$, and in turn $a$
  is clearly determined by $c_1(\cE)$.
\end{proof}

\subsection{Kronecker quivers}

Let $w \ge 2$ and let $\Upsilon_w$ be the Kronecker quiver
with two vertexes
$\mathbf{e_1}$, $\mathbf{e_2}$ and $w$
arrows from $\mathbf{e_1}$ to $\mathbf{e_2}$.
\[\begin{tikzpicture}[scale=1]
  \draw (-2.5,0) node [] {$\Upsilon_3$:}; 
  \draw (-1,0) node [above] {$ \mathbf{e_1}$};
        \draw (1,0) node [above] {$\mathbf{e_2}$};
        \node (1) at (-1,0) {$\bullet$};
        \node (1) at (-1,0) {$\bullet$};
        \node (2) at (1,0) {$\bullet$};
        \draw[->,>=latex] (1) to (2);
        \draw[->,>=latex] (1) to[bend left] (2);
        \draw[->,>=latex] (1) to[bend right] (2);
    \end{tikzpicture}
\]

\subsubsection{Representations of Kronecker quivers}
Given integers $a$ and $b$, a representation $\cR$ of $\Upsilon_X$ of dimension vector $(b,a)$ is given
by a pair of vector spaces, $B$ and $A$ with $\dim(B)=b$ and
$\dim(A)=a$ and $w$ linear maps $B^* \to A$. We have the obvious
notions such as direct sum, irreducible representation etc.
If $\cR$ has dimension vector $(b,a)$, its deformation space has virtual dimension:
\[
\psi(a,b)= w ab-a^2-b^2+1.
\]

We think of a representation of $\Upsilon_w$ of dimension vector
$(b,a)$ as a matrix $M$ of size $a \times b$ whose entries are
linear forms in $w$ variables, namely if our $w$ linear maps have
matrices $M_1,\ldots,M_w$ in a fixed basis, we write
$M=x_1M_1+\cdots+x_w M_w$. If $w=2$ we write $\sx=x_1$ and $\sy=x_2$.

Let $W$ be a vector space of dimension $w$ with a basis indexed
by the arrows of $\Upsilon_w$.
The representation $\cR$ then
corresponds uniquely to an element $\xi$ of $A \ts B \ts W^*$, so we write
 $\cR = \cR_\xi$.
 We write $M_\xi$ the matrix corresponding to $\xi
\in A \ts B \ts W^*$. It is customary to write $M_\xi=M_{\xi'}\bp
M_{\xi''}$ when $\cR_\xi=\cR_{\xi'} \oplus \cR_{\xi''}$.
Geometrically, 
the space $\PP W$ parametrizes isomorphism classes of non-zero
representations of $\Upsilon_w$ with dimension vector $(1,1)$.
The matrix $M_\xi$ is naturally written as a morphism of sheaves over the dual space:
\[
M : B^*\ts \cO_{\PD W}(-1) \to A\ts \cO_{\PD W}.
\]
Via the natural isomorphism $H^0(\PD W,\cO_{\PD W}(1)) \simeq W^*
\simeq H^0(\PP W,\cT_{\PP W}(-1))$, the matrix $M_\xi$ is transformed into a
matrix of twisted vector fields.

We write $\bD^b(\Upsilon_w)$ for the derived category of
$\bk$-representations of $\Upsilon_w$.
It is naturally equivalent to
the full subcategory $\langle \Omega_{\PP  W}(1), \cO_{\PP W}\rangle$ of
$\bD^b(\PP W)$.
Given a non-zero vector of $W^*$ and the
corresponding point $\sa \in \PP W$, there is a morphism $\Omega_{\PP
  W}(1) \to \cO_{\PP W}$ vanishing at $\sa$.
The cone $S_\sa$ of this morphism is mapped by this equivalence to the associated
$(1,1)$-representation $\cR_\sa$ of $\Upsilon_w$.

\subsubsection{Irregular exceptional pairs and the universal extension} 
\label{irreg}

Let $(\cP,\cN)$ be an exceptional pair over a surface scroll $X$, 
with $\cP=|\cP|$ and $\cN=|\cN|$, and $\Ext^i_X(\cP,\cN)=0$ for $i\ne 1$.
Define $W^* = \Ext^1_X(\cP,\cN)$ and $w=\dim(W)$.
Over $X \times \PP W$ we have the universal extension:
\[
   0 \to \cN \boxtimes \cO_{\PP W} \to \cW \to \cP \boxtimes \cO_{\PP
  W}(-1) \to 0.
\]
The functor $\Phi : \bD^b(\PP W) \to \bD^b(X)$ defined by $\bR
q_*(p^*(-) \ts \cW)$ gives an equivalence of
$\bD^b(\Upsilon_w)$ onto the subcategory $\langle \cP ,\cN\rangle \subset \bD^b(X)$
sending $\cR_\xi$ to the extension $\cF=\cW|_{X \times \{\xi\}}$ fitting into:
\[
0 \to A \ts \cN \to \cF \to B^* \ts \cP \to 0.
\]
Write $\cF_\sa$ for the extension of $\cP$ by $\cN$ corresponding to
$\sa \in \PP W$. We have: 
\begin{equation}
  \label{F2 e F1}
\Phi(\cO_{\PP W}) \simeq \cN, \qquad
\Phi(\Omega_{\PP W}(1)) \simeq \cP[-1], \qquad \Phi(S_\sa) \simeq \cF_\sa.
\end{equation}

\begin{lem} \label{aspita} Let $w\ge 2$, $\xi \in A \ts B \ts W^*$ and set $\cR=\cR_\xi$ and $\cF=\Phi(\cR)$.
  \begin{enumerate}[i)]
  \item \label{ff} We have $\Ext^i_{\Upsilon_w}(\cR,\cR) \simeq
    \Ext^i_{X}(\cF,\cF)$, and this space is zero for $i \ne 0,1$.
  \item \label{ind} The bundle
 $\cF$ is indecomposable if and only if $\cR$ is
    irreducible.
  \item  \label{psicasi} Let $(b,a)$ be the  dimension vector of $\cR$. Then we have
    the cases.
    \begin{enumerate}
    \item If $\psi(a,b)<0$ then $\cR$ is decomposable, hence so is $\cF$.
    \item If $\cF$ is
      indecomposable and rigid then  $\psi(a,b)= 0$, and this happens if and only if
       $\{a,b\} = \{\phi_{w,k},\phi_{w,k+1}\}$ for some $k \ge 0$, cf. \S \ref{braid-definition}.
    \item \label{citoqua} If $\psi(a,b)=0$, then $\cF$ is exceptional for general
      $\xi$. Also, $\cF$ is exceptional if and only if there is a
      well-determined integer $k$ such that $\cF \simeq
      \cG_k$ and $k \ge 1$ or $\cF \simeq\cG_k[1]$ and $k \le 0$.
    \end{enumerate}
  \item \label{somma} The bundle $\cF$ is rigid if and only if there are integers
    $k \ne 0$, $a_k$ and $a_{k+1}$ such that:
    \[
    \cF \simeq |\cG_{k}|^{a_{k}}  \oplus |\cG_{k+1}|^{a_{k+1}}.
    \]
  \end{enumerate}
\end{lem}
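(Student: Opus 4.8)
The backbone of the whole argument is the equivalence $\Phi$ of $\bD^b(\Upsilon_w)$ onto $\langle \cP,\cN\rangle$ recorded in \eqref{F2 e F1}, which transports every question about $\cF=\Phi(\cR)$ to the representation theory of the Kronecker quiver, a hereditary algebra. For \eqref{ff}, full faithfulness of $\Phi$ gives $\Ext^i_{\Upsilon_w}(\cR,\cR)\simeq\Ext^i_X(\cF,\cF)$ for all $i$, and since the path algebra of $\Upsilon_w$ has global dimension one, the left-hand side vanishes for $i\notin\{0,1\}$. For \eqref{ind}, $\Phi$ is an equivalence onto a full subcategory, hence respects finite direct-sum decompositions and carries indecomposables to indecomposables, so $\cF$ is indecomposable exactly when $\cR$ is irreducible.

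For \eqref{psicasi} the plan is first to record the homological meaning of $\psi$: by \eqref{ff} and additivity of $\chi$,
\[
\chi(\cR,\cR)=\hom_{\Upsilon_w}(\cR,\cR)-\ext^1_{\Upsilon_w}(\cR,\cR)=a^2+b^2-wab=1-\psi(a,b),
\]
so $\psi(a,b)$ is one minus the Tits form of the dimension vector $(b,a)$. The three cases then follow from Kac's theorem identifying dimension vectors of indecomposables with positive roots. In (a), $\psi(a,b)<0$ means the Tits form exceeds $1$, so $(b,a)$ is not a root, no indecomposable has this dimension vector, and $\cR$ — hence $\cF$ by \eqref{ind} — decomposes. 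In (b), an indecomposable rigid module over a hereditary algebra is a brick (Happel–Ringel), so $\hom_{\Upsilon_w}(\cR,\cR)=1$ and the displayed identity forces $\psi(a,b)=0$; the Tits form equals $1$, i.e. $(b,a)$ is a real root, and the identity $\phi_{w,k}^2+\phi_{w,k+1}^2-w\phi_{w,k}\phi_{w,k+1}=1$ (constant in $k$, value $1$ at $k=0$) together with the exhaustion of positive real roots of $\Upsilon_w$ by the Fibonacci pairs yields $\{a,b\}=\{\phi_{w,k},\phi_{w,k+1}\}$. In (c), $\psi(a,b)=0$ makes $(b,a)$ a real, hence Schur, root, so it carries a unique indecomposable whose $GL$-orbit is dense because $\Ext^1$ vanishes; thus $\cF$ is exceptional for general $\xi$, and $\cF$ is exceptional precisely when $\cR$ is this unique indecomposable, which by the Fibonacci sequences \eqref{NP>}, \eqref{NP<} is $|\cG_k|$ for the $k$ determined by the dimension vector.

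Part \eqref{somma} is the substantive one. For the \emph{if} direction I would note that, for $k\neq0$, $\cG_k$ and $\cG_{k+1}$ lie in the same cohomological degree (both $0$ if $k\ge1$, both $1$ if $k\le-1$), so that $\ext^1_X(|\cG_k|,|\cG_{k+1}|)=\ext^1_X(\cG_k,\cG_{k+1})$ and likewise with the factors reversed; as $(\cG_k,\cG_{k+1})$ is a regular exceptional pair these both vanish, and combined with the exceptionality of each $\cG_k$ this makes $|\cG_k|^{a_k}\oplus|\cG_{k+1}|^{a_{k+1}}$ rigid. The index $0$ must be excluded precisely because $|\cG_0|=\cP$ and $|\cG_1|=\cN$ sit in different degrees, with $\ext^1_X(\cP,\cN)=W^*\neq0$.

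For the converse I would decompose a rigid $\cR$ into indecomposables: each summand is again rigid, hence exceptional, hence some $|\cG_m|$ by \eqref{citoqua}; since the Grothendieck group has rank two, the exceptional-sequence (partial tilting) constraint bounds the number of non-isomorphic summands by two. The main obstacle is to show these two summands must be \emph{consecutive} and avoid index $0$: I would argue that for non-consecutive exceptionals $\cG_k,\cG_m$ with $|k-m|\ge2$, as well as for the straddling pair $(\cG_0,\cG_1)=(\cP[-1],\cN)$ inherited from the irregular pair $(\cP,\cN)$, a nonzero $\Ext^1$ survives in one direction between the underlying bundles, contradicting rigidity of $\cF$. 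Establishing these non-vanishings from the mutation/Fibonacci sequences and the regular-versus-irregular dichotomy for consecutive pairs is the technical heart of the proof.
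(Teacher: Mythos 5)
Parts \eqref{ff}, \eqref{ind} and \eqref{psicasi} of your argument are correct and follow the same route as the paper: the paper settles \eqref{ff} and \eqref{ind} by full faithfulness of $\Phi$ plus heredity, and settles \eqref{psicasi} by citing \cite[Theorem 4]{kac} together with the identification of indecomposable Schurian representations with the Fibonacci bundles from \cite{brambilla:fibonacci}; your Tits-form identity $\chi(\cR,\cR)=1-\psi(a,b)$, the Happel--Ringel brick argument, and the dense-orbit argument are exactly the standard facts behind those citations, so no complaint there.

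The problem is part \eqref{somma}. The paper's entire proof of it is a citation of \cite[Lemma 5]{daniele:iyama-yoshino:to-appear}, whereas you attempt a direct proof. Your ``if'' direction is complete, but the ``only if'' direction stops at the decisive point: after correctly reducing to the statement that a rigid $\cF$ is a sum of at most two bundles $|\cG_k|$, $|\cG_m|$, you assert --- and explicitly leave unproven, calling it ``the technical heart'' --- that $|k-m|\ge 2$, or a pair straddling the index $0$, forces a nonzero $\Ext^1$ between the underlying bundles. Without that non-vanishing, nothing rules out, say, $\cF\simeq|\cG_1|^{a}\oplus|\cG_5|^{b}$, so as written the proof of \eqref{somma} is incomplete. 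The gap is genuinely fillable with tools you already invoked. By \eqref{NP>} and \eqref{NP<}, every $|\cG_k|$, $k\in\Z$, is $\Phi$ of an honest representation (no shift): for $k\ge 1$ the preprojective one of dimension vector $(\phi_{w,k-1},\phi_{w,k})$, for $k\le 0$ the preinjective one of dimension vector $(\phi_{w,1-k},\phi_{w,-k})$. Hence by \eqref{ff} all the relevant $\Ext^1$'s can be computed in the hereditary module category, where $\ext^1=\hom-\chi\ge-\chi$. Extending $\phi_{w,\bullet}$ to negative indices by the recursion (so $\phi_{w,-j}=-\phi_{w,j}$), the Euler form gives, for $m\ge k\ge 1$ (and identically for two preinjectives),
\[
\chi\bigl(|\cG_m|,|\cG_k|\bigr)=\phi_{w,m-1}\phi_{w,k-1}+\phi_{w,m}\phi_{w,k}-w\,\phi_{w,m-1}\phi_{w,k}=-\phi_{w,m-k-1},
\]
which is $\le -1$ as soon as $m\ge k+2$, forcing $\ext^1_X(|\cG_m|,|\cG_k|)>0$; and for a preinjective (index $k\le 0$, set $j=-k$) against a preprojective (index $m\ge 1$) one gets $\chi=\phi_{w,j}\phi_{w,m}-\phi_{w,j+1}\phi_{w,m+1}<0$, which handles every mixed pair at once and in particular recovers $\ext^1_X(\cP,\cN)=w\neq 0$ for $(k,m)=(0,1)$. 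Inserting these two computations turns your outline into a self-contained proof of the statement the paper only quotes.
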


\begin{proof}
   \eqref{ff} is clear since $\Phi$ is fully faithful and
  representations of $\Upsilon_w$ form a hereditary category.
  For \eqref{ind}, note that $\cF$ is decomposable if and only if
  $\Hom_{\Upsilon_w}(\cR,\cR)$  contains a non-trivial idempotent. By
  \eqref{ff}, this happens if and only if $\Hom_{\Upsilon_w}(\cR,\cR)$ contains a
  non-trivial idempotent, i.e., if and only if $\cR$ is irreducible.
  
  Part \eqref{psicasi} follows from \cite[Theorem
  4]{kac}, cf. also \cite[Remarks a, b]{kac}, as $\psi(a,b)<0$
  directly implies that $\cR$ is decomposable, while $\psi(a,b)=0$
  precisely means that  $(b,a)$ is a Schur root in the sense of Kac.
  Since indecomposable Schurian representations are well-known to be
  uniquely determined by their dimension vector, we conclude that 
  they correspond precisely to the 
  Fibonacci
  bundles $|\cG_k|$ by
  \cite{brambilla:fibonacci, brambilla:simplicity}.
  Part \eqref{somma} follows from \cite[Lemma
  5]{faenzi:iyama-yoshino}.
\end{proof}

\subsubsection{Proof of parts \eqref{primaparte} and \eqref{secondaparte} of Theorem \ref{main-wild}
} \label{ulrich-notation}

Now we apply this approach to Ulrich bundles.
If $\cU$ is an  Ulrich bundle over $X$, then, up to
a twist, $\cU$
fits into an extension of the form \eqref{ulrich-extension} by
Proposition \ref{ulrichprop}.
Set $W = \Ext^1_X(\cL,\cO_X(-F))^*$, so that $w = \dim(W)=d_X-2$.
Then, there is some $\xi \in A
\ts B \ts W^*$ determined up to a non-zero scalar such that $\cU \simeq
\Phi(\cR_\xi)$. We write $\cU = \cU_\xi$, and we call this the 
 Ulrich bundle associated with $\xi$. Note that this is an instance of a bundle
 referred to as $\cF=\Phi(\cR_\xi)$ from \S \ref{irreg}, but we use
 the letter $\cU$ when dealing with Ulrich bundles.

Having this in mind we see that, if $\cU$ is indecomposable and
rigid, then looking at the weight vector $(b,a)$ of $\cR_\xi$,
the unordered pair $\{a,b\}$ must give a Schur root, i.e. $a$ and $b$
must be two
Fibonacci numbers of the form $\phi_{w,k}$, $\phi_{w,k-1}$, for $k
\ge 1$. Moreover for the weight vector $(b,a)=(\phi_{w,k-1},\phi_{w,k})$
there is a unique indecomposable bundle, which we
denote by $\fU_k$, and each $\fU_k$ is exceptional (we will use
gothic letters only for rigid objects).

We notice that,
since $\cO_X(-F)$ and $\cL$ are interchanged by dualizing and twisting
by $\omega_X(H)$, the bundles $\fU_{k}(-H)$ and $\fU_{1-k}^* \ts \omega_X$ are both indecomposable
and rigid with the same weight vector, and must then be isomorphic by
Lemma \ref{aspita}, part \eqref{citoqua}.

As an explicit example, over $X=S(2,3)$, the first Fibonacci sequences for
$\fU_k$ are:
\begin{align*}
  &\fU_1 = \cO_X(-F), & & 0 \to \cO_X(-F)^3 \to \fU_{2} \to \cL \to 0, \\
  &&&0 \to \cO_X(-F)^{8} \to \fU_{3} \to \cL^3 \to 0, \\
  &\fU_0 = \cL, 
  &&0 \to \cO_X(-F) \to \fU_{-1} \to \cL^3 \to 0, \\
  &&&0 \to \cO_X(-F)^3 \to \fU_{-2} \to \cL^8 \to 0, \\
  &&&0 \to \cO_X(-F)^{8} \to \fU_{-3} \to \cL^{21} \to 0.
\end{align*}

\subsection{Matrix pencils}

Representations of Kronecker quivers 
are completely classified only if $w=2$. In this case we canonically
have $W^* \simeq W$. We write $\PP^1=\PP W$, so $\bD^b(\Upsilon_2)
\simeq \bD^b(\PP^1)$, and morphisms of the form $M=M_\xi$ are matrix pencils.
These are classified by
Kronecker-Weierstrass theory, which we recall for the reader's convenience.
We refer to \cite[Chapter
19.1]{burgisser-clausen-shokrollahi} for proofs.
Fixing variables $\sx,\sy$ on $\bP$, and given positive integers
$u$, $v$, $n$ and $\sa \in \bk$ one defines:
\[
\begin{footnotesize}
 \fC_{u} =
 \begin{pmatrix}
   \sx & \\
   \sy & \sx & \\
   &    \sy & \ddots & \\
   && \ddots & \sx \\
   &&&    \sy
 \end{pmatrix},
\hspace{0.4cm}
     \fB_{v} =
 \begin{pmatrix}
   \sx & \sy \\
   &    \sx & \sy \\
   && \ddots & \ddots\\
   &&&    \sx & \sy
 \end{pmatrix},
\hspace{0.4cm}
J_{\sa,n} =
\begin{pmatrix}
  \sa & 1 \\
  & \ddots & \ddots\\
  &&\sa &    1 \\
  &&&    \sa
\end{pmatrix},
\end{footnotesize}
\]
and $\fJ_{\sa,n} = \sx I_n + \sy J_{\sa,n}$,
where $\fC_u$ has size $(u +1)\times u$,
 $\fB_v$ has size $v \times (v +1)$, and $J_{\sa,n} \in \bk^{n \times n}$.
The next lemma is obtained combining \cite[Theorem 19.2 and
19.3]{burgisser-clausen-shokrollahi}, with the caveat that, up to
changing basis in $\bP$, we can assume that a matrix pencil $M$ has
no {\it infinite elementary} divisors, i.e., the morphism $M:
B^* \ts \cO_{\bP}(-1) \to A \ts \cO_{\bP}$ has constant rank
around $\infty=(0:1)  \in \bP$.

\begin{lem} \label{KW}
Up to possibly changing basis in $\bP$, any matrix pencil $M$ is
equivalent to:
\[
\fC_{u_1} \bp \cdots \bp \fC_{u_r} \bp
\fB_{v_1} \bp \cdots \bp \fB_{v_s} \bp \fJ_{n_1,\sa_1} \bp \cdots
\bp \fJ_{n_t,\sa_t} \bp \fZ_{a_0,b_0},
\]
for some integers $r,s,t,a_0,b_0$ and $u_i,v_j,n_k$, and
some $\sa_1,\ldots,\sa_t \in \bk$.
\end{lem}

We have the following straightforward isomorphisms:
\[
\coker(\fC_u) \cong \cO_{\bP}(u), \qquad \ker(\fB_v) \cong
\cO_{\bP}(-v-1), \qquad \coker(\fJ_{n,\sa}) \cong \cO_{n \sa},
\]
where $\cO_{n \sa}$ is the skyscraper sheaf over the point $\sa$
with multiplicity $n$.
Allowing $\sa$ to vary in $\bP$ instead of $\bk$ only amounts to
authorizing infinite elementary divisors too.

\begin{prop} \label{prop-ulrich}
  Assume $w=2$, let $\xi$ be an element of $\notB \ts \notA \ts W$ and set
  $\cE=\cU_\xi$, $M=M_\xi$.
  If $\cE$ is indecomposable, then $\Ext^2_X(\cE,\cE)=0$,
   $|a -b| \le 1$ and:
  \begin{enumerate}[(i)]
  \item   if $a=b+1$ then $M \simeq \fB_b$ and  $\cE$ is
    exceptional;
  \item   if $a=b-1$, then $M \simeq \fC_{b-1}$ and $\cE$ is exceptional;
  \item   if $a=b$ then $M \simeq \fJ_{a,\sa}$ for some $\sa \in \PP^1$, and the
    indecomposable deformations of $\cE$ vary in a $1$-dimensional family, parametrized by a projective line.
  \end{enumerate}
\end{prop}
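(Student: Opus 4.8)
The plan is to translate everything into the language of matrix pencils via the equivalence $\Phi$ of \S\ref{irreg}, then read off the claims from Kronecker–Weierstrass theory (Lemma \ref{KW}) together with the deformation-space formula $\psi(a,b)=2ab-a^2-b^2+1=-(a-b)^2+1$ specialized to $w=2$. Since $\cE=\cU_\xi=\Phi(\cR_\xi)$ and $\Phi$ is fully faithful on $\langle\cO_X(-F),\cL\rangle$, by Lemma \ref{aspita}\eqref{ff} we have $\Ext^i_X(\cE,\cE)\simeq\Ext^i_{\Upsilon_2}(\cR,\cR)$, and this vanishes for $i\ne 0,1$; in particular $\Ext^2_X(\cE,\cE)=0$ is immediate because the Kronecker quiver category is hereditary. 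This disposes of the first assertion at once.

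First I would handle the bound $|a-b|\le 1$. By Lemma \ref{aspita}\eqref{ind}, $\cE$ indecomposable is equivalent to $\cR=\cR_\xi$ irreducible. I would then invoke Lemma \ref{KW}: up to change of basis, the pencil $M_\xi$ decomposes as a direct sum $\bp$ of blocks $\fC_{u_i}$, $\fB_{v_j}$, $\fJ_{\sa_k,n_k}$, and a trivial block $\fZ_{a_0,b_0}$. Irreducibility of $\cR$ forces this decomposition to consist of exactly one nontrivial block with no trivial summand. Inspecting the block sizes — $\fC_u$ has dimension vector $(u,u+1)$, $\fB_v$ has $(v+1,v)$, and $\fJ_{\sa,n}$ has $(n,n)$ — one sees that a single indecomposable block forces $|a-b|\le 1$, with $a-b=+1$ giving a $\fB$-block, $a-b=-1$ a $\fC$-block, and $a=b$ a $\fJ$-block. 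This simultaneously establishes the matrix normal forms claimed in cases (i)–(iii).

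Next I would pin down the three cases. For (i) and (ii) the normal form is $\fB_b$ respectively $\fC_{b-1}$; here $\psi(a,b)=-(a-b)^2+1=0$, so $(b,a)$ is a Schur root and $\cR$ is the (unique) indecomposable rigid representation of that dimension vector, whence $\cE$ is exceptional by Lemma \ref{aspita}\eqref{psicasi}, in fact identified with one of the Fibonacci bundles $\cG_k$. For (iii), $a=b$ gives $\psi(a,a)=1>0$, the Jordan-type pencil $\fJ_{\sa,a}$ depends on the parameter $\sa\in\PP^1$, and since $\cO_{n\sa}=\coker(\fJ_{\sa,n})$ moves in the pencil as $\sa$ ranges over $\bP$, the indecomposable bundles $\cE$ deform in a one-parameter family. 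I would make the dimension count precise by noting $\ext^1_X(\cE,\cE)=\dim\Ext^1_{\Upsilon_2}(\cR,\cR)=\hom-\chi=1-\psi(a,a)\cdot(-1)$-type bookkeeping, or more directly by observing that the simple regular representations of $\Upsilon_2$ are exactly the $\cR_\sa$ parametrized by $\PP W=\PP^1$ and that every indecomposable with $a=b$ is a Jordan block over one such point, so its isomorphism class is determined by $\sa$ and the length $n=a$.

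The main obstacle, and the place demanding care, is case (iii): precisely identifying the moduli as a projective line rather than merely an unspecified one-parameter family. The subtlety is that the parameter $\sa$ naturally lives in $\bk$ in the raw Kronecker–Weierstrass statement, while the intrinsic parameter space is $\PP^1=\PP W$; the remark following Lemma \ref{KW} — that allowing $\sa\in\bP$ amounts to permitting infinite elementary divisors — is exactly what converts the affine line of Jordan eigenvalues into the full projective line, and I would lean on the earlier reduction that one may assume no infinite elementary divisors up to a single change of basis. I would therefore argue that for fixed length $n=a$ the indecomposable deformations of $\cE$ are the bundles $\Phi$ sends $\fJ_{\sa,n}$ to, and as $\sa$ sweeps $\bP$ these are pairwise non-isomorphic (distinct support points) and exhaust the one-dimensional stratum, giving the parametrizing $\PP^1$.
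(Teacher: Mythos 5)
Your proof is correct and follows essentially the same route as the paper: reduce to a single irreducible Kronecker--Weierstrass block via Lemma \ref{aspita} and Lemma \ref{KW}, conclude exceptionality for the blocks with $|a-b|=1$, and read off the $\PP^1$-family from the Jordan blocks (the only variation is that the paper transports $\cE$ through the equivalence $\langle\cO_X(-F),\cL\rangle\simeq\bD^b(\PP^1)$ and observes that its image is a shifted line bundle $\cO_{\bP}(u)$ or $\cO_{\bP}(-v-1)[1]$, hence exceptional, whereas you invoke the Schur-root condition $\psi(a,b)=0$ and uniqueness of the indecomposable of that dimension vector; both are valid). One slip to fix: your sentence assigning $a-b=+1$ to a $\fB$-block and $a-b=-1$ to a $\fC$-block contradicts your own dimension-vector count --- by your sizes, $\fC_u$ has $(b,a)=(u,u+1)$, so $a-b=+1$ is the $\fC$-block --- though this merely mirrors a labelling discrepancy already present between the proposition's statement and the paper's proof, not a mathematical gap.
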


\begin{proof} 
By Lemma \ref{aspita} we can assume that $M$ is
irreducible, so that $M$ is itself isomorphic to one of the summands appearing in Lemma \ref{KW}.
Consider the equivalence:
\begin{equation}
  \label{3maniere}
 \langle \cP,\cN \rangle \simeq \bD^b(\Upsilon_2) \simeq \bD^b(\PP^1). 
\end{equation}

If $M\simeq \fC_u$, then this equivalence maps  $\cE$ to $\coker(\fC_u) \simeq \cO_{\bP}(u)$, with $u \ge 1$.
Since $\cO_{\bP}(u)$ is exceptional, Using part \eqref{ff} of Lemma \ref{aspita}, we get that $\cE$ is an
also an exceptional bundle.
For the case $\fB_v$, \eqref{3maniere} sends $\cE$ to $\ker(\fB_v)[1] \simeq
\cO_{\bP}(-v-1)[1]$, with $v \ge 1$.
Again $\cE$ is then exceptional.
The same argument works for $\fZ_{1,0}$ in which
case  $\cE \simeq \cL$ is mapped to $\cO_{\bP}(-1)[1]$, 
and for $\fZ_{0,1}$, whose counterpart on $\PP^1$ is $\cO_{\bP}$, and
of course $\cE \simeq \cO_X(-F)$.

It remains to look at the case $M \simeq \fJ_{n,\sa}$, so that $\cE$
is mapped under \eqref{3maniere} to $\cF = \cO_{n
  \sa}$. In this case, $\cF$ is filtered by the sheaves $\cF_m=\cO_{m \sa}$, for
$1 \le m \le n$ and $\cF_m/\cF_{m-1} \simeq \cF_1$.
This induces a filtration of $\cE$ be the sheaves $\cE_m$, image of
$\cF_m$ via \eqref{3maniere},
having quotients $\cE_m/\cE_{m-1} \simeq \cE_{\sa}$, recall \eqref{F2 e F1}.
Note  that $\cE_{\sa}$ is a simple bundle with
$\Ext^1_X(\cE_{\sa},\cE_{\sa}) \simeq \bk$ once more by Lemma \ref{aspita}, and
$\Ext^2_X(\cE_{\sa},\cE_{\sa}) =0$. Bundles of the form $\cE_{\sa}$
are of course parametrized by $\PP^1$.
Deformations of $\cE$ are thus provided by the
motions in $\PP^1$ of each of the factors $\cE_{\sa}$ of its filtration.
But only bundles associated with sheaves of the form $\cO_{n \sa'}$ continue
to be indecomposable, so deformations of $\cE$
giving rise to indecomposable bundles correspond exactly to the
motions of ${n \sa}$ in  $\bP$ itself.
\end{proof}

\section{Quartic scrolls are of tame representation type}

\label{quartic}

Our goal here is to prove Theorem \ref{main-tame}. According to
Bertini and del Pezzo's classification, cf.
\cite{eisenbud-harris:centennial}, a smooth
non-degenerate surface $X$ in $\PP^5$ has degree $4$ if and only if
$X$ is a quartic scroll or a Veronese surface, the last case being
well understood and excluded of our study. So we actually show two
stronger statements, one for each of the two non-isomorphic smooth
quartic scrolls, namely $S(2,2)$ and $S(1,3)$, which we treat
separately in \S \ref{S22} and \S \ref{S13}. The outcome is a complete 
classification of ACM bundles on these scrolls.
Here, continuous families of indecomposable ACM sheaves only exist for Ulrich
bundles of even rank, which are parametrized by $\PP^1$. 
In both cases, all other indecomposable ACM sheaves are rigid, but for
$S(1,3)$ the classification 
of non-Ulrich ACM sheaves is a bit more involved.

\subsection{ACM bundles on $S(2,2)$} \label{S22}

We now show a stronger version of Theorem \ref{main-tame} for $X=S(2,2)$.

\begin{thm} \label{quartic-is-tame}
  Let $X=S(2,2)$ and let $\cE$ be an indecomposable ACM
  bundle on $X$.
  Then, up to a twist, either $\cE$ is $\cO_X$ or $\cL_X(F)$ or $\cL(-F)$, either $\cE$ is Ulrich, and can be
  expressed as an extension:
  \[
  0 \to \cO_X(-F)^a \to \cE \to \cL^b \to 0,
  \]
  for some $a,b \ge 0$ with $|a-b| \le 1$.
  In this case:
  \begin{enumerate}[i)]
  \item  for any $(a,b)$ with $a = b \pm 1$, there exists a
    unique indecomposable bundle of the form $\cE$ as above,
    and moreover this bundle is exceptional;
  \item \label{occhio} for $a = b \ge 1$, the isomorphism classes of indecomposable
    bundles of the form $\cE$ are parametrized by $\bP$.
  \end{enumerate}
\end{thm}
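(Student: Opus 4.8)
The plan is to reduce Theorem \ref{quartic-is-tame} to the Kronecker-quiver analysis already set up for scrolls, using that on $X=S(2,2)$ we have $d_X=4$, hence $w=d_X-2=2$, so Ulrich bundles are governed by matrix pencils and Proposition \ref{prop-ulrich} applies verbatim. First I would treat the non-Ulrich part: take an indecomposable ACM bundle $\cE$ and apply the structural results of \S\ref{ab}. Since here $\vartheta=2$ and $\epsilon=0$, we have $\vartheta+\epsilon=2\neq 3$, so Lemma \ref{soloO} (and its dual counterpart stated just after it) forces that we cannot simultaneously have $P\neq 0$ with $b\neq 0$, nor $Q\neq 0$ with $a\neq 0$. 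I would then argue that the only indecomposable summands with $P\neq 0$ or $Q\neq 0$ are the ACM line bundles classified in Lemma \ref{linebundles}; because $X=\FF_0$ is a del Pezzo surface with $\vartheta+\epsilon<3$, the splitting criterion of Proposition \ref{criterions} should kick in to show that a non-Ulrich indecomposable $\cE$ with trivial Ulrich part ($a=b=0$) must be a line bundle, and up to twist the initialized ACM line bundles are exactly $\cO_X$, $\cO_X(F)=\cL_X(F)$ (notation to be matched) and $\cO_X(H-F)=\cL(-F)$ after suitable twisting. The delicate bookkeeping is to confirm that these exhaust the non-Ulrich indecomposables and that mixed cases (part Ulrich, part line bundle) cannot occur for an \emph{indecomposable} bundle, which follows from the $\Ext^1$-vanishing between the pieces.

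For the Ulrich part I would invoke Proposition \ref{ulrichprop} to write any Ulrich $\cE$, up to twist, as an extension
\[
0 \to \cO_X(-F)^a \to \cE \to \cL^b \to 0,
\]
and then identify $\cE$ with $\cU_\xi=\Phi(\cR_\xi)$ for the irregular exceptional pair $(\cP,\cN)=(\cL,\cO_X(-F))$ with $W=\Ext^1_X(\cL,\cO_X(-F))^*$ of dimension $w=2$. By Lemma \ref{aspita}\eqref{ind}, indecomposability of $\cE$ is equivalent to irreducibility of $\cR_\xi$, so I can feed $M=M_\xi$ directly into Proposition \ref{prop-ulrich}. That proposition already yields $|a-b|\le 1$, exhibits the normal forms $\fB_b$, $\fC_{b-1}$, $\fJ_{a,\sa}$ in the three cases, and states exceptionality in the two cases $a=b\pm1$ together with the $\bP$-family in the case $a=b$. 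Thus parts (i) and (ii) of the theorem are essentially a transcription of Proposition \ref{prop-ulrich}, once I check that every indecomposable Ulrich bundle on this particular scroll does arise from an irreducible pencil (which it does, again by Lemma \ref{aspita}) and that distinct points of $\bP$ in the case $a=b$ give non-isomorphic bundles (which is the content of the last paragraph of the proof of Proposition \ref{prop-ulrich}, via the Kronecker--Weierstrass classification and the sheaves $\cO_{n\sa}$ on $\bP$).

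The main obstacle, I expect, is the non-Ulrich classification rather than the Ulrich one: I must rule out indecomposable ACM bundles that are neither line bundles nor Ulrich. Concretely, the hard point is showing that whenever the ``middle'' Ulrich data $(a,b)$ vanish, the bundle splits into line bundles. For $S(2,2)$ the splitting criterion Proposition \ref{criterions} requires the cohomological vanishing \eqref{thensplitss} along the two families $\cE(tH\pm F)$; verifying these vanishings for an indecomposable non-Ulrich ACM bundle with $a=b=0$ is where the real work lies, and it relies crucially on $\vartheta+\epsilon=2<3$ so that the obstruction spaces $\HH^1(X,\cL^*)$ and $\HH^1(X,\cL^*(F))$ that appear in Lemma \ref{s'annullas} and Lemma \ref{soloO} vanish. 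Once this is in place, induction on the rank (splitting off one line-bundle summand at a time, exactly as at the end of the proof of Proposition \ref{criterions}) finishes the non-Ulrich case, and assembling the two cases gives the stated trichotomy: a twist of $\cO_X$, $\cL_X(F)$, or $\cL(-F)$; or an indecomposable Ulrich bundle falling under (i) or (ii).
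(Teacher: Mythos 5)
Your route is the paper's route: the same non-Ulrich/Ulrich dichotomy, the same lemmas (\ref{lemmetto}, \ref{s'annullas}, \ref{soloO} and Proposition \ref{criterions} for the first branch; Propositions \ref{ulrichprop}, \ref{prop-ulrich} and Lemma \ref{aspita} for the second), and your Ulrich half is indeed just a transcription of Proposition \ref{prop-ulrich}. But the step you defer as ``where the real work lies'' needs no further cohomological work at all: it is your own first-paragraph argument run twist by twist, which is exactly how the paper closes. Since $\cE$ is not Ulrich, Proposition \ref{ulrichprop} together with Lemma \ref{lemmetto} shows that \emph{every} twist $\cE(tH)$ has $P \ne 0$ or $Q \ne 0$; if $P \ne 0$ then $b=0$ by Lemma \ref{soloO} (as $\vartheta+\epsilon=2\ne 3$), the vanishing $\Ext^1_X(Q,P)=0$ of Lemma \ref{s'annullas} splits $P$ off, and indecomposability forces $Q=0$, $a=0$; symmetrically when $Q\ne 0$. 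Because the invariants $a,b$ of the twist $\cE(tH)$ are $\hh^1(X,\cE(tH-F))$ and $\hh^1(X,\cE((t-1)H+F))$, the conclusion ``$a=b=0$ for every $t$'' \emph{is} the hypothesis \eqref{thensplitss}; it is not something to be verified on top of it. So your logical order (fix ``$a=b=0$'', then separately check \eqref{thensplitss}) should be straightened out, but no idea is missing here.

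The one genuine error is your identification of the line bundles. By Lemma \ref{linebundles}, $\cO_X(H-F)$ is an \emph{Ulrich} line bundle; up to twist it is $\cO_X(-F)$, i.e. the rank-one case $(a,b)=(1,0)$ of the extension, so it cannot appear in the non-Ulrich list. It is also not the theorem's $\cL(-F)$: on $S(2,2)$ one has $\cL(-F)=\cO_X(2F-H)$, whose twist by $H$ is $\cO_X(2F)$, and $\cO_X(H-F)$ is not a twist of $\cO_X(2F)$ since their difference $H-3F$ is not a multiple of $H$. The correct reading of the trichotomy is: the non-Ulrich indecomposables are, up to twist, $\cO_X$, $\cO_X(F)$ (the statement's ``$\cL_X(F)$'') and $\cO_X(2F)=\cL(-F)(H)$, while the two Ulrich line bundles $\cO_X(H-F)$ and $\cO_X(3F)=\cL(H)$ are absorbed into the Ulrich branch as the rank-one cases $a=b\pm 1$. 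This is a bookkeeping fix via Lemma \ref{linebundles} rather than a structural one, but as written your final assembly contradicts that lemma.
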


\begin{proof}
  We assume $\cE$ is an indecomposable ACM bundle which
  is not Ulrich over $X=S(2,2)$, and prove that  $\cE$ is then a line
  bundle,   all the remaining statements being clear by Proposition
  \ref{prop-ulrich}.

  In view of Proposition \ref{ulrichprop}, we know
  that replacing $\cE$ with $\cE(t H)$ for any $t \in \Z$, we always get $P \ne 0$ or
  $Q \ne 0$.
  But, if $P \ne 0$, then $b=0$
  by Lemma \ref{soloO} since $\vartheta + \epsilon = \vartheta = 2$, and $Q=0$ by
  Lemma \ref{s'annullas}. By Lemma \ref{lemmetto} we now deduce $a=0$.
  Likewise if we start with $Q \ne 0$ we conclude $a=b=0$. 
  By Proposition \ref{criterions}, we deduce that $\cE$ is a line
  bundle. The conclusion follows.

\end{proof}

\subsection{ACM bundles on $S(1,3)$} \label{S13}

Here we take up the second type of quartic scroll, $X=S(1,3)$. In this
case  a bit more effort is required to classify ACM bundles,
in order to take into account a larger set of sporadic rigid bundles,
essentially $\cV$ and $\cW$ of Example \ref{V-W}.
Our goal will be to prove the following result.

\begin{thm} \label{S(1,3)-is-tame}
  Let $X=S(1,3)$. Then any indecomposable ACM
  bundle $\cE$ on $X$ is either $\cE$ an Ulrich bundle, or a line bundle, or
  a twist of $\cV$, $\cV(-F)$, $\cW$.
  If $\cE$ is Ulrich then it is an extension:
  \[
  0 \to \cO_X(-F)^a \to \cE \to \cL^b \to 0,
  \]
  for some $a,b \ge 0$ with $|a-b| \le 1$.
  So:
  \begin{enumerate}[i)]
  \item \label{ocio} for $(a,b)$ with $a = b \pm 1$, there exists a
    unique indecomposable $\cE$ as above and $\cE$ is exceptional;
  \item \label{occhio2} for $a = b \ge 1$, the isomorphism classes of
    $\cE$'s as above are parametrized by $\bP$.
  \end{enumerate}
\end{thm}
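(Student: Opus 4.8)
The plan is to mimic the structure of the proof of Theorem \ref{quartic-is-tame} for $S(2,2)$, isolating the Ulrich bundles (whose classification is already handled by Proposition \ref{prop-ulrich}, since $w=d_X-2=2$ here too) and then carefully enumerating the non-Ulrich indecomposables. The Ulrich part is immediate: if $\cE$ is Ulrich, then up to twist it fits into the extension $0 \to \cO_X(-F)^a \to \cE \to \cL^b \to 0$ by Proposition \ref{ulrichprop}, and the bound $|a-b|\le 1$ together with the exceptional/one-parameter dichotomy of statements \eqref{ocio} and \eqref{occhio2} follows verbatim from Proposition \ref{prop-ulrich}. So the entire content of the theorem is the claim that the only non-Ulrich indecomposable ACM bundles, up to twist, are line bundles and the three sporadic bundles $\cV$, $\cV(-F)$, $\cW$ of Example \ref{V-W}.

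So I would assume $\cE$ is indecomposable ACM and not Ulrich, and aim to pin it down. By Proposition \ref{ulrichprop}, for every twist $\cE(tH)$ at least one of $P$, $Q$ is nonzero. The key structural tool is Lemma \ref{soloO} (and its $Q$-analogue stated just after it), which, since now $\vartheta+\epsilon = 1+2 = 3$, \emph{does} permit the case $P \simeq I \simeq \cO_X^{a_{0,0}}$ (respectively $Q \simeq J \simeq \cL(-F)^{b_{2,1}}$) — this is exactly the borderline case that was excluded for $S(2,2)$ by the condition $\vartheta+\epsilon=2$. First I would dispose of the case where $\cE$ (after suitable twist) has both $P=Q=0$: then by Proposition \ref{criterions} $\cE$ splits as a sum of line bundles, hence is a single line bundle by indecomposability, giving the line bundle case of the statement. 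The remaining, genuinely new case is when no twist makes both $P$ and $Q$ vanish, which by Lemma \ref{soloO} forces the structure $P\simeq\cO_X^c$ with $b=0$, or dually $Q\simeq\cL(-F)^d$ with $a=0$, for some twist.

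The heart of the argument, and the main obstacle, is analyzing this borderline case to show the only resulting indecomposables are twists of $\cV$, $\cV(-F)$, $\cW$. Here I expect to combine Proposition \ref{prop-abcd}, which presents $\cE$ via $0 \to \cO_X^c \oplus \cO_X(-F)^a \to \cE \to \cL^b \oplus \cL(-F)^d \to 0$, with a monad analysis as in \S\ref{monadology}. Since $\hh^1(X,\cL^*)=1$ on $S(1,3)$, the extension data governing how $\cO_X^c$ (or $\cL(-F)^d$) attaches to the Ulrich part is severely constrained: the relevant $\Ext$-groups are one-dimensional, which is precisely what forces the sporadic bundles $\cV$ (with $c=b=1$, $a=d=0$) and its twist $\cV(-F)$ (with $a=d=1$, $b=c=0$) to appear, exactly as constructed in Example \ref{V-W}. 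I would argue that indecomposability plus the vanishing $\Ext^1_X(Q,P)=0$ from Lemma \ref{s'annullas} rules out any larger values of the multiplicities, so that the Ulrich block is at most one-dimensional and glues to a single copy of $\cO_X$ or $\cL(-F)$; a separate short computation, using the extension $\ext^1_X(\cL,\cO_X(H-F))=1$ exhibited in Example \ref{V-W}, accounts for $\cW$. The delicate point, which I would treat carefully rather than by analogy, is verifying that these extensions cannot be combined with a nontrivial Ulrich piece to produce further indecomposables beyond the three listed — i.e. that whenever the Ulrich multiplicities $a,b$ (or the line-bundle multiplicities $c,d$) exceed the minimal values, the bundle either decomposes or fails to be ACM. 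This finite case-check, organized by which of the four multiplicities in Proposition \ref{prop-abcd} are nonzero, is where the real work lies, and I would lean on the splitting criterion (Proposition \ref{criterions}) and the cohomology formulas of \S\ref{hirzebruch} specialized to $\vartheta=1$, $\epsilon=2$ to close each branch.
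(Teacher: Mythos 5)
Your treatment of the Ulrich case is correct and agrees with the paper: Propositions \ref{ulrichprop} and \ref{prop-ulrich} give the extension presentation, the bound $|a-b|\le 1$, and the dichotomy of \eqref{ocio}/\eqref{occhio2}. The problem is the non-Ulrich case, where your plan rests on Lemma \ref{soloO} and Proposition \ref{prop-abcd}. Both of these carry the hypothesis $\vartheta \ge 2$, and $S(1,3)$ has $\vartheta=1$, $\epsilon=2$, so neither applies. You have misread the conclusion ``$\vartheta+\epsilon=3$'' in Lemma \ref{soloO} as pointing to $S(1,3)$; inside the lemma's hypotheses ($\vartheta\ge 2$, $\vartheta+\epsilon\le 3$) it singles out $S(2,3)$ and $S(3,3)$, never $S(1,3)$. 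This is not a removable technicality. The proof of Lemma \ref{soloO} hinges on $\HH^0(X,\cL^*)=0$, which is false on $S(1,3)$: there $\cL^*=\cO_X(H-3F)=\cO_X(\Delta)$ has a section (the negative section), so the map $\varphi$ need not kill the summand $\cO_X^p$. Worse, the conclusion of Proposition \ref{prop-abcd} is genuinely false on $S(1,3)$: the bundle $\cW$ of Example \ref{V-W} is indecomposable ACM with $c_1(\cW)=2F$, while any rank-$2$ extension of line bundles from $\{\cL,\cL(-F)\}$ by line bundles from $\{\cO_X,\cO_X(-F)\}$ has first Chern class with $H$-coefficient $-1$; so $\cW$ admits no presentation of the prop-abcd form. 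The paper states this failure explicitly right after Example \ref{V-W}. Consequently your case-check, organized by the four multiplicities $a,b,c,d$, structurally cannot produce $\cW$ (nor $\cV(H-F)$), and the ``separate short computation'' you propose for $\cW$ cannot be grafted onto a framework that excludes it from the start.

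What the paper actually does for $S(1,3)$ is different and longer: it first splits off the genuine Ulrich part of the monad using the vanishing results Lemma \ref{not containing} and Lemma \ref{noext} (the ``reduced monad''), concluding that for an indecomposable non-Ulrich $\cE$ one has $\cE\simeq \cK_0$ or $\cE \simeq \cC_0$ (Lemma \ref{abc}); it then performs a second reduction (Lemma \ref{reduction}) by applying the resolution construction of \S\ref{ab} to the twist $P(-H)$, decomposing the result with the rigid-extension Lemma \ref{varipezzi}; this is where the summands $\cO_X$, $\cO_X(H-F)$ and $\cV(H-F)$ — the objects your framework forbids — appear in $P$. The final assembly uses $\Ext^1_X(\cL,\cV(H-F))=0$ (Lemma \ref{nienteV}) and one more application of Lemma \ref{varipezzi} to land exactly on line bundles, $\cV$, $\cV(-F)$ and $\cW$ up to twist. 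To repair your proof you would need to replace Lemma \ref{soloO}/Proposition \ref{prop-abcd} by an analysis valid when $\HH^0(X,\cL^*)\ne 0$, which is essentially the content of the paper's Lemmas \ref{abc} and \ref{reduction}.
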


\subsubsection{The monad for $S(1,3)$}

Let $\cE$ be an ACM bundle on $X$. The procedure of \S
\ref{monadology} in this case gives a monad which is tantamount to the
following display:
\begin{equation}
  \label{diagram-monad-2}
  \xymatrix@R-3.5ex@C-3ex{& 0 \ar[d] & 0 \ar[d]\\
    & P\ar@{=}[r] \ar[d] & P \ar[d]  \\
    0 \ar[r] & \cK \ar[r]  \ar[d]  & \cE \ar[r] \ar[d] & Q \ar@{=}[d]\ar[r] & 0\\
    0 \ar[r] & \cU \ar[r] \ar[d] & \cC \ar[r] \ar[d] & Q \ar[r] & 0\\
    & 0 & 0 }
\end{equation}
with $P,Q$ defined in Lemma \ref{lemmetto} and where $\cU$ is an
Ulrich bundle, called the {\it Ulrich part} of $\cE$.

\subsubsection{Two vanishing results for Ulrich bundles and a lemma on rigid extensions}

We first need to prove that Ulrich bundles of rank greater than one do
not mix with other elements of our monad.

\begin{lem} \label{not containing}
  Any Ulrich bundle $\cU$ on $X$, not containing $\cL$
  as a direct summand, satisfies $\HH^1(X,\cU^*)=0$.
  Likewise $\HH^1(X,\cU \ts \cL^*(F)))=0$, unless $\cO_X(-F)$ is a
  summand of $\cU$.
\end{lem}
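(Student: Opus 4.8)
The plan is to read both vanishings off the defining extension of $\cU$ together with the matrix pencil picture of \S\ref{section kronecker}. Since $d_X=4$ for $S(1,3)$ we have $w=2$, so by Proposition \ref{ulrichprop} we may assume (up to twist) that $\cU$ sits in $0 \to \cO_X(-F)^a \to \cU \to \cL^b \to 0$, with extension class a pencil $M=M_\xi \in \Ext^1_X(\cL,\cO_X(-F)) \otimes \Hom(\bk^b,\bk^a)$. By Lemma \ref{KW} I would bring $M$ to Kronecker--Weierstrass normal form; since the functor $\Phi$ is additive, $\cU$ decomposes as the direct sum of the bundles attached to the blocks $\fC_u$, $\fB_v$, $\fJ_{n,\sa}$ and of the degenerate summands $\cL$ (block $\fZ_{1,0}$) and $\cO_X(-F)$ (block $\fZ_{0,1}$). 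Both cohomology groups then split as the sum over these summands, so it suffices to treat one indecomposable block at a time.

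For the first statement I would dualize the extension to $0 \to (\cL^*)^b \to \cU^* \to \cO_X(F)^a \to 0$ and apply $\rR\pi_*$. Using $\cO_X(F)=\pi^*\cO_{\PP^1}(1)$ and $\rR\pi_*\cL^* \simeq \cO_{\PP^1}(-2)\oplus\cO_{\PP^1}$ (neither having higher direct image, via $\pi_*\cO_X(H)=\cO_{\PP^1}(1)\oplus\cO_{\PP^1}(3)$), the question descends to $\PP^1$, where the sole contribution to $\HH^1$ comes from $\cO_{\PP^1}(-2)^b$; since moreover $\hh^1(X,\cO_X(F))=0$, the long exact sequence gives $\HH^1(X,\cU^*)=\coker(\partial)$ for the connecting map $\partial\colon \HH^0(X,\cO_X(F))^a \to \HH^1(X,\cL^*)^b$. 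This $\partial$ is cup product with $M$ through the pairing $\HH^0(X,\cO_X(F)) \otimes \Ext^1_X(\cO_X(F),\cL^*) \to \HH^1(X,\cL^*)$, which on $\PP^1$ is the Serre-duality multiplication $\HH^0(\cO_{\PP^1}(1)) \otimes \HH^1(\cO_{\PP^1}(-3)) \to \HH^1(\cO_{\PP^1}(-2)) \simeq \bk$ and is therefore perfect. A short linear-algebra check on each normal-form block then shows $\partial$ is onto for $\fC_u$, $\fB_v$, $\fJ_{n,\sa}$ and for $\cO_X(-F)$ (for $\fJ_{n,\sa}$ one hits each target coordinate using the diagonal $\sx$-entries), while for $\cL=\fZ_{1,0}$ it is the zero map $\bk^0 \to \bk$. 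Equivalently, $\partial$ is surjective exactly when $M$ carries no zero column, i.e. when $\cL$ is not a summand of $\cU$; this proves the first claim.

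For the second statement I would invoke the self-duality noted in \S\ref{ulrich-notation}: the operation $\cE \mapsto \cE^\vee:=\cE^* \otimes \omega_X(H)$ interchanges $\cO_X(-F)$ and $\cL$. Applied to our extension it yields $0 \to \cO_X(-F)^b \to \cU^\vee \to \cL^a \to 0$, so $\cU^\vee$ is again a bundle of the same type, and $\cL$ is a summand of $\cU^\vee$ if and only if $\cO_X(-F)$ is a summand of $\cU$. Since $(\cU^\vee)^* = \cU \otimes \omega_X(H)^{-1} = \cU(H-2F) = \cU \otimes \cL^*(F)$, the first statement applied to $\cU^\vee$ reads exactly that $\HH^1(X,\cU \otimes \cL^*(F))=0$ unless $\cO_X(-F)$ is a summand of $\cU$. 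Alternatively one twists the defining extension by $\cO_X(H-2F)$ and repeats the pencil computation verbatim, the obstruction now being a zero row of $M$.

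The step I expect to be most delicate is the precise identification of the connecting maps with the pencil $M$ against the correct perfect pairing, and the accompanying bookkeeping of which degenerate block --- $\cL=\fZ_{1,0}$ or $\cO_X(-F)=\fZ_{0,1}$ --- produces a zero column versus a zero row; once this is in place the surjectivity verifications on the individual Kronecker blocks are routine.
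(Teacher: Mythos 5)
Your proof is correct, and it shares with the paper's the decisive opening move: both dualize \eqref{ulrich-extension} and identify $\HH^1(X,\cU^*)$ with the cokernel of the connecting map $\partial\colon \HH^0(X,\cO_X(F))^a \to \HH^1(X,\cL^*)^b$, namely cup product with the extension class through the pairing $\HH^0(X,\cO_X(F)) \ts \Ext^1_X(\cL,\cO_X(-F)) \to \HH^1(X,\cL^*)\simeq\bk$; the perfectness of this pairing, which you obtain by pushing down to $\PP^1$ and invoking Serre duality there, is exactly the identification of $W$ with $U$ that the paper's proof uses implicitly. Where you genuinely diverge is in how surjectivity of $\partial$ is established. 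The paper argues basis-freely: if $\partial=\hat\xi$ is not surjective, its image lies in a proper summand $B'\subset B$, and perfectness of the pairing forces $\xi \in A\ts B'\ts W^*$, so that $B''\ts\cL$ splits off $\cU$ --- contradicting the hypothesis. This one-line tensor-algebra argument needs no classification of pencils and handles all dimension vectors at once. You instead invoke Kronecker--Weierstrass normal form (Lemma \ref{KW}) and verify surjectivity block by block; this is valid and makes the role of the degenerate block $\fZ_{1,0}=\cL$ very concrete, but it is heavier machinery for the same conclusion, and one caution is in order: ``no zero column'' detects a $\fZ_{1,0}$ summand only for pencils already in normal form (the pencil $(\sx \;\; \sx)$ has no zero column yet $\cL$ splits off the corresponding bundle), so your criterion must be read, as you indeed use it, after the reduction to blocks. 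Finally, your treatment of the second vanishing is cleaner than the paper's: where the paper says the proof ``follows the same pattern,'' you deduce it formally from the first statement via the involution $\cE\mapsto\cE^*\ts\omega_X(H)$, which exchanges $\cO_X(-F)$ and $\cL$ and turns $(\cU^\vee)^*$ into $\cU\ts\cL^*(F)$; this duality reduction is a genuine (small) improvement in economy.
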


\begin{proof}
  Recall the notation from \S \ref{hirzebruch} and \ref{ulrich-notation}. Observe that $U = \HH^0(X,\cO_X(F))$ is identified with
  $\HH^1(X,\cL^*(-F))$ and that $W$ and $U$ are also canonically identified. Let $\cU = \cU_\xi$, for some $\xi \in A \ts B \ts U$.
  We prove that $\HH^1(X,\cU^*)=0$ if $\cU$ has no copy of $\cL$
  as direct summand, the other statement is analogous.
  We dualize \eqref{ulrich-extension}:
  \begin{equation}
    \label{dualE}
    0 \to B \ts \cL^* \to \cU^* \to A^* \ts \cO_X(F) \to 0.    
  \end{equation}
  Cup product gives a map:
  \[
  \HH^0(X,A^* \ts \cO_X(F))  \ts \Ext^1_X(B \ts \cL,A \ts \cO_X(-F))
  \to  \HH^1(X,B \ts \cL^*).
  \]
  Restricting to $\langle \xi \rangle \subset \Ext^1_X(B \ts \cL,A
  \ts \cO_X(-F))$ we get a map $\hat \xi : A^* \ts U \to B$ which is the boundary map
  associated with global
  sections of \eqref{dualE}, and we have:
  \[
  A^* \ts U \xr{\hat \xi} B   \to \HH^1(X,\cU^*) \to 0.
  \]
  
  Now if $\HH^1(X,\cU^*) \ne 0$ then
  the map $\hat \xi$ is not surjective, say it factors through
  $\notB^* \ts U \xr{\hat {\xi'}} B'$ with $B = B' \oplus
  B''$ and $B'' \ne 0$. Then $\xi$ can be written as $\xi' \in A \ts
  B' \ts U$, extended by zero to $A \ts B \ts U$. It follows that
  $B'' \ts \cL$ is a direct summand of $\cU^*$, which
  contradicts the assumption. The proof of the second statement
  follows the same pattern.
\end{proof}

\begin{lem} \label{noext}
Let $\cE$ be an ACM bundle on $X$, $P$
and $Q$ as in Lemma \ref{lemmetto}, and $\cU$ be an Ulrich bundle. If $\cU$ does not contain $\cL$ as a direct
summand, then $\Ext^1_X(\cU,P)=0$. If $\cU$ does not contain
$\cO_X(-F)$ as a direct summand, then $\Ext^1_X(Q,\cU)=0$.
\end{lem}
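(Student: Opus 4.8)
The plan is to treat the two vanishings in parallel: in each case I peel $P$ (resp.\ $Q$) into line bundles using its defining sequence from \S\ref{ab}, reduce $\Ext^1_X(\cU,-)$ (resp.\ $\Ext^1_X(-,\cU)$) to a sum of $\Ext$-groups against those line bundles, and evaluate these from the Ulrich extension \eqref{ulrich-extension} together with the cohomology formulas of \S\ref{hirzebruch}; the hypotheses on $\cU$ enter through Lemma \ref{not containing} at exactly one critical twist. For the first statement I would apply $\Hom_X(\cU,-)$ to \eqref{Ps}, obtaining $\Ext^1_X(\cU,I)\to\Ext^1_X(\cU,P)\to\Ext^2_X(\cU,\cH^{-1}\Theta\Theta^*\cE)$, so that it suffices to kill the two outer terms. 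By \eqref{Isplits} we have $I\simeq\bigoplus_{i\ge0}\cO_X(iF)^{p_i}$, so $\Ext^1_X(\cU,I)$ is a sum of the groups $\HH^1(X,\cU^*(iF))$; dualizing \eqref{ulrich-extension} and twisting shows each of these is squeezed between $\HH^1(X,\cL^*(iF))$ and $\HH^1(X,\cO_X((i+1)F))$, both of which vanish for $i\ge1$ by \S\ref{hirzebruch}, while the borderline value $i=0$ is exactly $\HH^1(X,\cU^*)=0$, supplied by Lemma \ref{not containing} since $\cL$ is not a summand of $\cU$. The right-hand term vanishes because \eqref{dominatess} builds $\cH^{-1}\Theta\Theta^*\cE$ out of $\cL$ and $\cL(-F)$, and $\Ext^2_X(\cU,\cL)=\Ext^2_X(\cU,\cL(-F))=0$ follows from \eqref{ulrich-extension} and Serre duality, the relevant $\HH^2$'s dualizing to $\HH^0$'s of bundles of negative $H$-coefficient. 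Hence $\Ext^1_X(\cU,P)=0$.

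For the second statement I would argue dually from \eqref{Qs}, applying $\Hom_X(-,\cU)$ to get $\Ext^1_X(J,\cU)\to\Ext^1_X(Q,\cU)\to\Ext^2_X(\pi^*\bR^1\pi_*\cE,\cU)$. The pullback argument behind \eqref{Isplits}, applied now to the kernel \eqref{J}, exhibits $J\simeq\bigoplus_{i\ge1}\cL(-iF)^{q_i}$, so $\Ext^1_X(J,\cU)$ is a sum of groups $\HH^1(X,\cU\ts\cL^*(iF))$; these vanish automatically for $i\ge2$, while the critical value $i=1$ is precisely $\HH^1(X,\cU\ts\cL^*(F))=0$, the second assertion of Lemma \ref{not containing}, valid since $\cO_X(-F)$ is not a summand of $\cU$. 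Dually, \eqref{per Qs} presents $\pi^*\bR^1\pi_*\cE$ as a sum of $\cO_X(-nF)$ with $n\ge1$, and $\Ext^2_X(\cO_X(-nF),\cU)$ Serre-dualizes to zero. Therefore $\Ext^1_X(Q,\cU)=0$.

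The bulk of the work is the bookkeeping of which line bundles occur in $I$, $J$, $\cH^{-1}\Theta\Theta^*\cE$ and $\pi^*\bR^1\pi_*\cE$, and the attendant cohomology computations on $S(1,3)$, all of which are routine. The one genuine subtlety --- and the only point where the hypotheses on $\cU$ are used --- is the single borderline twist in each case ($i=0$ for $\cU^*$, and $i=1$ for $\cU\ts\cL^*$), where the naive vanishing fails precisely because $\hh^1(X,\cL^*)=1$ on a quartic scroll. This is exactly the twist that Lemma \ref{not containing} is built to control, so the main obstacle is really the prior lemma rather than the present deduction.
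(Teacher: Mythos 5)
Your proof is correct and follows essentially the same route as the paper's: apply $\Hom_X(\cU,-)$ to \eqref{Ps} (resp.\ $\Hom_X(-,\cU)$ to \eqref{Qs}), kill the two outer terms by reducing them to cohomology of line-bundle twists computable from \eqref{ulrich-extension}, with Lemma \ref{not containing} supplying exactly the one non-automatic vanishing at the borderline twist. The only cosmetic difference is that the paper disposes of $\Ext^1_X(\cU,I)$ via the two-term resolution \eqref{I}, reducing directly to $\Ext^1_X(\cU,\cO_X)=0$ and $\Ext^2_X(\cU,\cO_X(-F))=0$, rather than via the line-bundle splitting \eqref{Isplits} as you do (and similarly it treats the second statement as ``similar'' through \eqref{J} rather than through a splitting of $J$); both variants are valid and rest on the same ingredients.
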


\begin{proof}
  Again we prove only the first statement, the second one being
  similar. So assume $\cU$ does not have $\cL$ as a direct summand.
  We first show:
  \begin{equation}
    \label{firstvanishing}
  \Ext^2_X(\cU,\cH^{-1} \Theta \Theta^*\cE)=0.    
  \end{equation}

  To check this, using \eqref{dominatess} it suffices to show $\Ext^2_X(\cU,\cL)=0$.
  In turn, writing $\cU$ in the form of Proposition  \ref{ulrichprop},
  we are reduced to check $\Ext^2_X(\cL,\cL)=0$ and
  $\Ext^2_X(\cO_X(-F),\cL)=0$, which are obvious. So \eqref{firstvanishing} is proved.

  We now look at the sheaf $I$ of \S \ref{ab}. In view of \eqref{firstvanishing}, in order to prove our statement
  it is enough to apply $\Hom_X(\cU,-)$ to \eqref{Ps} and verify
  $\Ext^1_X(\cU,I)=0$. To see this vanishing, we apply $\Hom_X(\cU,-)$
  to the exact sequence \eqref{I} defining $I$. Then we have to prove:
  \[
  \Ext^1_X(\cU,\cO_X)=\Ext^2_X(\cU,\cO_X(-F))=0.    
  \]
  The first vanishing is given by Lemma \ref{not containing}. The
  second follows again immediately by Proposition  \ref{ulrichprop} by
  applying  $\Hom_X(-,\cO_X(-F))$.
\end{proof}

The lemma on rigid extensions that we will use is the following
standard fact. We provide a proof for the reader's convenience.

\begin{lem} \label{varipezzi}
Let $c,d \ge 0$ be integers, $C$ and $D$ be vector spaces of
dimension $c$ and $d$. Consider two sheaves $\cA$ and $\cB$ with
 $\ext^1_X(\cB,\cA)=1$ and corresponding extension sheaf $\cGG$.
If $\cE$ fitting into
\[
0 \to C \ts \cA  \to \cE \to D \ts \cB \to 0,
\]
 is defined by $\eta \in \Ext^1_X(D \ts \cB,C \ts \cA)
\simeq C \ts D^*$ corresponding to a map $D \to C$ of rank $r$,
then:
\[
\cE \simeq \cGG^r \oplus \cA^{\max(c-r,0)} \oplus \cB^{\max(d-r,0)}.
\]
\end{lem}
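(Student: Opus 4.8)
The plan is to analyze the extension $\eta \in \Ext^1_X(D \ts \cB, C \ts \cA) \simeq C \ts D^*$ purely through the linear-algebra of the underlying map $\phi \colon D \to C$ of rank $r$. The key observation is that since $\ext^1_X(\cB,\cA)=1$, the extension space $\Ext^1_X(D \ts \cB, C \ts \cA)$ is canonically $\Hom_\bk(D,C) \simeq C \ts D^*$, with the scalar extension class of the single copy $\Ext^1_X(\cB,\cA)$ corresponding to $\cGG$. So the whole extension $\cE$ is determined by the matrix $\phi$, and isomorphic extensions correspond to equivalent matrices under the $\GL(C) \times \GL(D)$-action by pre- and post-composition.

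First I would use the fact that a rank-$r$ matrix $\phi \colon D \to C$ can be brought, after choosing suitable bases of $C$ and $D$, into the normal form that is the identity on an $r$-dimensional block and zero elsewhere. Concretely, choose splittings $D = D' \oplus D''$ and $C = C' \oplus C''$ with $\dim D' = \dim C' = r$, such that $\phi$ restricts to an isomorphism $D' \xrightarrow{\sim} C'$ and vanishes on $D''$ while having image inside $C'$. Because the extension class is additive in the $C$- and $D$-directions (the identification $\Ext^1 \simeq C \ts D^*$ is $\bk$-linear in each factor), this block decomposition of $\phi$ induces a corresponding direct-sum decomposition of the extension $\cE$.

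The next step is to read off the summands from each block. On the $r$-dimensional diagonal block, $\phi$ is the identity, so the extension is the direct sum of $r$ copies of the extension defined by the scalar $1 \in \Ext^1_X(\cB,\cA)$, which is by definition $\cGG$; this contributes $\cGG^r$. On the complementary pieces $C'' = C/\im$ (of dimension $\max(c-r,0)$) and $D'' = \Ker\phi$ (of dimension $\max(d-r,0)$) the extension class vanishes, so those contribute the trivial extensions, namely $\cA^{\max(c-r,0)}$ and $\cB^{\max(d-r,0)}$ respectively. Assembling these gives exactly
\[
\cE \simeq \cGG^r \oplus \cA^{\max(c-r,0)} \oplus \cB^{\max(d-r,0)},
\]
as claimed.

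The main obstacle, and the point that requires care rather than cleverness, is justifying that the change of basis on $C$ and $D$ can genuinely be realized as an automorphism of the two outer terms $C \ts \cA$ and $D \ts \cB$ compatible with the pushout/pullback of the extension — i.e. that an equivalence of matrices $\phi \mapsto g \phi h^{-1}$ lifts to an isomorphism of the corresponding extensions $\cE$. This is the standard functoriality of $\Ext^1$ under morphisms of the outer terms, combined with the naturality of the isomorphism $\Ext^1_X(D \ts \cB, C \ts \cA) \simeq \Hom_\bk(D,C) \ts \Ext^1_X(\cB,\cA)$; once this naturality is in place, the decomposition of $\phi$ into blocks produces the claimed splitting automatically. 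I would therefore spend the bulk of the write-up making the identification $\Ext^1 \simeq C \ts D^*$ explicit and verifying its bilinearity, after which the rest is the elementary reduction of $\phi$ to diagonal block form.
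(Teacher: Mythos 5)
Your proposal is correct and follows essentially the same route as the paper's proof: identify the extension class with the linear map $D \to C$ via the one-dimensionality of $\Ext^1_X(\cB,\cA)$, put that map in diagonal/block normal form by a change of basis in $C$ and $D$, and read off one copy of $\cGG$ per nonzero entry and trivial extensions ($\cA$'s and $\cB$'s) on the kernel and cokernel blocks. Your extra care about naturality of the identification $\Ext^1_X(D\ts\cB, C\ts\cA) \simeq C \ts D^*$ under the $\GL(C)\times\GL(D)$-action simply makes explicit what the paper's phrase ``in a suitable basis'' takes for granted.
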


\begin{proof}
  Apply $\Hom_X(\cB,-)$ to the exact sequence defining $\cE$ and
  restrict to $D \ts \langle \id_\cB \rangle$. We obtain this way a
  map $D \to C \ts \Ext^1_X(\cB,\cA)$ which is identified with the linear
  map $D \to C$ associated with $\eta$ after choosing a generator of $\Ext^1_X(\cB,\cA)$.
  In a suitable basis, this map is just a diagonal matrix and 
  each non-zero entry of this matrix corresponds to an instance of
  the generator $\Ext^1_X(\cB,\cA)$, hence provides
  a copy of $\cGG$. Completing to a basis of $D$ and $C$ gives
  the remaining copies of $\cB$ and $\cA$.
\end{proof}

\subsubsection{The reduced monad}

Let again $\cE$ be ACM on $X$. We write
its Ulrich part $\cU$ as:
\[
\cU = \cU_0 \oplus \cL^{b_0 }\oplus \cO_X(-F)^{a_0 },
\]
where $\cU_0$ contains no copy of $\cL$ or $\cO_X(-F)$ as direct factor.
We apply Lemma \ref{noext} to the summands of $\cU$ and use the display of the monad
\eqref{diagram-monad-2}, whereby getting subbundles $\cK_0\subset \cK$ and
$\cC_0\subset \cC$ with
decompositions:
\begin{align*}
\cK & = \cK_0 \oplus \cU_0 \oplus \cO_X(-F)^{a_0},  \\
\cC & = \cC_0 \oplus \cU_0 \oplus \cL^{b_0}.
\end{align*}
Here, the bundles $\cK_0$ and $\cC_0$ fit into:
\begin{align}
   & \label{K0} 0 \to P \to \cK_0 \to \cL^{b_0} \to 0, \\
   & \label{C0} 0 \to \cO_X(-F)^{a_0} \to \cC_0 \to Q \to 0.
\end{align}
In turn, again by Lemma \ref{noext}, we obtain a splitting:
\[
\cE = \cU_0 \oplus \cE_0.
\]
where $\cE_0$ is an extension of $\cK_0$ and $Q$, or equivalently of  $P$ and $\cC_0$.

Therefore, if $\cE$ is indecomposable ACM and not Ulrich, then $\cU_0=0$ and $a=a_0$,
$b=b_0$. In this case $\cU \simeq \cO_X(-F)^a \oplus \cL^b$, and we
speak of the {\it reduced monad}.

\begin{lem} \label{abc}
  Let $\cE$ be an indecomposable non-Ulrich ACM bundle on $X$. 
  \begin{enumerate}[i)]
  \item \label{a} For any twist of $\cE$ one has either $a=0$ and $Q=0$, or $b=0$ and $P=0$. 
  \item \label{b} If $\cE$ is initialized, the first case of the two
    cases above occurs.
  \item \label{c} If $\cE$ is initialized with $b_{0,0}=0$ then $\cE$ is $\cV$
    or $\cO_X(iF)$ with $0 \le i \le 3$.
  \end{enumerate}
\end{lem}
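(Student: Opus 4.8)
The plan is to run everything through the reduced monad of \S\ref{monadology}. Since $\cE$ is indecomposable and non-Ulrich we have $\cU_0=0$, so $\cE$ sits simultaneously in
\[
0 \to \cK_0 \to \cE \to Q \to 0, \qquad 0 \to P \to \cE \to \cC_0 \to 0,
\]
with $\cK_0$ and $\cC_0$ as in \eqref{K0} and \eqref{C0}. For part (i) I would first note that $P$ and $Q$ cannot both vanish: if $P=Q=0$, then Lemma \ref{lemmetto} realizes $\cE$ as an extension of $\cL^b$ by $\cO_X(-F)^a$, whence $\cE$ is Ulrich by Proposition \ref{ulrichprop}, a contradiction. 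It then suffices to prove the two implications $P\ne 0\Rightarrow a=Q=0$ and $Q\ne 0\Rightarrow b=P=0$; these are mutually exclusive, so together with the previous remark they select exactly one of the two stated alternatives.

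Each implication follows by splitting one of the two displayed sequences. Applying $\Hom_X(Q,-)$ to \eqref{K0} and using $\Ext^1_X(Q,P)=0$ (Lemma \ref{s'annullas}) together with $\Ext^1_X(Q,\cL)=0$ gives $\Ext^1_X(Q,\cK_0)=0$; hence $0\to\cK_0\to\cE\to Q\to 0$ splits and, when $Q\ne 0$, indecomposability forces $\cK_0=0$, i.e. $P=0$ and $b=0$. Dually, applying $\Hom_X(-,P)$ to \eqref{C0} and using $\Ext^1_X(Q,P)=0$ and $\Ext^1_X(\cO_X(-F),P)=0$ gives $\Ext^1_X(\cC_0,P)=0$, so $0\to P\to\cE\to\cC_0\to 0$ splits and $P\ne 0$ forces $\cC_0=0$, i.e. $a=Q=0$. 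The two auxiliary vanishings are where the work lies: from \eqref{I} and \eqref{dominatess} one reads that $I$ is a direct sum of copies of $\cO_X(iF)$ and $\cH^{-1}\Theta\Theta^*\cE$ a direct sum of copies of $\cL(iF)$, both with $i\ge 0$, while \eqref{J} gives $J\subset\cL(-F)^{b_{2,1}}$ and \eqref{per Qs} presents $\pi^*\bR^1\pi_*\cE$ as a sum of $\cO_X(iF)$ with $i\le -1$; feeding these into \eqref{Ps} and \eqref{Qs} reduces both claims, via the cohomology formulas of \S\ref{hirzebruch}, Serre duality and the identity $\HH^k(X,\cO_X(bF-H))=0$, to the vanishing of cohomology of explicit line bundles (e.g. $\Ext^1_X(\cL(-F),\cL)=\HH^1(X,\cO_X(F))=0$, $\Ext^2_X(\cL,\cL)=\HH^0(X,\omega_X)^*=0$ and $\Ext^2_X(\cO_X(iF),\cL)=\HH^0(X,\cO_X((i-1)F-H))^*=0$ on one side, and $\Ext^1_X(\cO_X(-F),\cO_X(iF))=\HH^1(X,\cO_X((i+1)F))=0$ for $i\ge 0$ on the other).

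For part (ii), assuming $\cE$ initialized, I would rule out $Q\ne 0$: in that case $\cE=\cC_0$ is an extension of $Q$ by $\cO_X(-F)^a$, and since $\HH^0(X,\cO_X(-F))=0$ while $Q\subset J\subset\cL(-F)^{b_{2,1}}=\cO_X(2F-H)^{b_{2,1}}$ has no sections, we obtain $\HH^0(X,\cE)=0$, contradicting initialization; hence the first alternative $a=Q=0$ holds. For part (iii), part (ii) gives $\cE=\cK_0$, i.e. $0\to P\to\cE\to\cL^b\to 0$, and the hypothesis $b_{0,0}=0$ collapses \eqref{dominatess} to $\cH^{-1}\Theta\Theta^*\cE=0$, so $P\simeq I=\bigoplus_{i\ge 0}\cO_X(iF)^{p_i}$ by \eqref{Ps}. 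If $b=0$ then $\cE=P$ is indecomposable, hence a single $\cO_X(iF)$, which is ACM precisely for $0\le i\le 3$ by Lemma \ref{linebundles}. If $b\ge 1$, a direct computation gives $\ext^1_X(\cL,\cO_X(iF))=0$ for $i\ge 1$ and $\ext^1_X(\cL,\cO_X)=1$; hence the summand $\bigoplus_{i\ge 1}\cO_X(iF)^{p_i}$ of $P$ splits off $\cE$ and, by indecomposability, must vanish, leaving $0\to\cO_X^{p_0}\to\cE\to\cL^b\to 0$ with $p_0\ge 1$. Since the extension sheaf of $\cL$ by $\cO_X$ is $\cV$ (Example \ref{V-W}), Lemma \ref{varipezzi} yields $\cE\simeq\cV^r\oplus\cO_X^{\max(p_0-r,0)}\oplus\cL^{\max(b-r,0)}$, and indecomposability forces $\cE\simeq\cV$.

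The hard part will be the two $\Ext^1$-vanishings in part (i): they are what make each monad split in the direction opposite to its natural one, and they rest on correctly recognizing $P$ and $Q$ as built from $\cO_X(iF)$ with $i\ge 0$, respectively from negative $F$-twists of $\cL$ and $\cO_X$, and on the scroll-specific identity $\HH^k(X,\cO_X(bF-H))=0$. A second delicate point, in part (iii), is that only the $\cO_X$-summand of $P$ can interact with $\cL^b$, because $\ext^1_X(\cL,\cO_X(iF))$ vanishes for $i\ge 1$ but equals $1$ for $i=0$; this is exactly what lets Lemma \ref{varipezzi} produce the sporadic bundle $\cV$ and nothing else, and it is the feature absent when $\vartheta\ge 2$ that makes $S(1,3)$ richer than $S(2,2)$.
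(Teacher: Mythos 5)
Your proof is correct and its skeleton is the paper's: pass to the reduced monad, split it by $\Ext^1$-vanishings plus indecomposability for (i), kill $\HH^0$ for (ii), and reduce (iii) to Lemma \ref{varipezzi} via $\ext^1_X(\cL,\cO_X)=1$. Two choices genuinely differ and are worth comparing. For the splitting vanishings, the paper quotes Lemma \ref{noext}: with $\cU=\cL$ it gives $\Ext^1_X(Q,\cL)=0$, and with $\cU=\cO_X(-F)$ it gives $\Ext^1_X(\cO_X(-F),P)=0$; it then performs a single splitting $\cE\simeq\cK_0\oplus\cC_0$ and reads both alternatives of (i) off it. You instead re-prove both vanishings by d\'evissage through \eqref{per Qs}, \eqref{J}, \eqref{Qs}, \eqref{Ps} and the splittings \eqref{Isplits}, \eqref{hmenouno}, and run two symmetric splittings plus the preliminary remark that $P=Q=0$ would force $\cE$ to be Ulrich. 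Your computations are correct, and they keep the argument at the level of line-bundle cohomology instead of invoking the Kronecker-quiver machinery behind Lemmas \ref{not containing} and \ref{noext}; the only cost is duplication of two special cases already available. In (iii), your observation that $\ext^1_X(\cL,\cO_X(iF))=0$ for \emph{every} $i\ge 1$ lets you split off all such summands at once, so the paper's separate exclusion of $i\ge 4$ via $\HH^1(X,\cE(-2H))\ne 0$ becomes unnecessary; moreover your values ($1$ for $i=0$, $0$ for $i\ge 1$) are the correct ones, the paper's printed claim (``zero for $i\le 3$ and $1$ for $i=3$'') being garbled by a typo.

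One slip you should fix, though it is harmless and is inherited from the paper's own loose phrase ``$\cE_0$ is an extension of $\cK_0$ and $Q$, or equivalently of $P$ and $\cC_0$'': the two sequences you display fail already on ranks, being off by $a$ and $b$ respectively. In the reduced monad $\cE$ sits in $0\to\cK_0\oplus\cO_X(-F)^a\to\cE\to Q\to 0$ and $0\to P\to\cE\to\cC_0\oplus\cL^b\to 0$ (and consequently in $0\to\cK_0\to\cE\to\cC_0\to 0$). This does not damage your argument: the vanishings you prove, $\Ext^1_X(Q,\cK_0)=0$ and $\Ext^1_X(\cC_0,P)=0$, are exactly what is needed to split $\cK_0$, respectively $\cC_0$, off the corrected sequences as direct summands, and indecomposability then gives the same two implications you state ($Q\ne 0\Rightarrow\cK_0=0$, i.e.\ $P=0$ and $b=0$; $P\ne 0\Rightarrow\cC_0=0$, i.e.\ $a=0$ and $Q=0$), from which (i), and then (ii) and (iii), follow as you wrote.
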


\begin{proof}
  We prove \eqref{a}. We have just seen that the Ulrich part of
  $\cE$ is $\cO_X(-F)^a \oplus \cL^b$. Let us show:
  \[
  \Ext^1_X(Q,\cK_0)=0.
  \]
  We know by Lemma \ref{s'annullas} that $\Ext^1_X(Q,P)=0$, so by
  \eqref{K0} it
  suffices to check $\Ext^1_X(Q,\cL)=0$. But this follows from Lemma
  \ref{noext} since $\cL$ is Ulrich.

  Looking at the reduced monad, we have proved $\cE \simeq \cK_0
  \oplus \cC_0$. By indecomposability of $\cE$, one of these two bundles must be zero, which
  is precisely what we need for \eqref{a}.

  Part \eqref{b} follows, since $\HH^0(X,\cC_0)=\HH^0(X,Q)=0$ by
  \eqref{J}, \eqref{Qs} and \eqref{C0}. 

  For \eqref{c},
  in view of \eqref{dominatess} and \eqref{Ps}, $b_{0,0}=0$ implies $P
  \simeq I \simeq \bigoplus_{i \ge 0}\cO_X(iF)^{p_i}$, where the
  second isomorphism is 
   \eqref{Isplits}. We would then have an exact sequence:
   \[
   0 \to \bigoplus_{i \ge 0}\cO_X(iF)^{p_i} \to \cE \to \cL^b \to 0,
   \]

  Note that any $i \ge 4$ is actually forbidden. Indeed,
  $\HH^1(X,\cO_X(-2H+iF)) \ne 0$ for $i \ge 4$. In turn this would easily imply
  $\HH^1(X,\cE(-2H)) \ne 0$, which is absurd for $\cE$ is
  ACM. Moreover $\ext_X^1(\cL,\cO_X(iF))$ is zero for $i \le 3$ and
  $1$ for $i=3$.
  This, together with Lemma \ref{varipezzi}, implies \eqref{c}.
\end{proof}

We need also another vanishing, this time for the bundle $\cV$ of
Example \ref{V-W}.

\begin{lem} \label{nienteV}
  The bundle $\cV$ satisfies $\Ext^1_X(\cL,\cV(H-F))=0$.
\end{lem}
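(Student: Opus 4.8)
The plan is to convert the $\Ext$ into an ordinary cohomology group, reduce to the surjectivity of a connecting map, and detect the relevant nonvanishing by restricting to the negative section $\Delta$. Since $\cL$ is a line bundle and $d_X=4$ forces $\cL=\cO_X(3F-H)$, so $\cL^*=\cO_X(H-3F)$, I would first rewrite
\[
\Ext^1_X(\cL,\cV(H-F)) \simeq \HH^1(X,\cV\ts\cL^*(H-F)) = \HH^1(X,\cV(2H-4F)),
\]
so the claim becomes $\HH^1(X,\cV(2H-4F))=0$. Twisting the defining sequence $0\to\cO_X\to\cV\to\cL\to 0$ of Example \ref{V-W} by $\cO_X(2H-4F)$ and using $\cL(2H-4F)=\cO_X(H-F)$, I get
\[
0\to\cO_X(2H-4F)\to\cV(2H-4F)\to\cO_X(H-F)\to 0.
\]
As $\cO_X(H-F)$ is Ulrich by Lemma \ref{linebundles}, hence ACM, we have $\HH^1(X,\cO_X(H-F))=0$, so the long exact sequence identifies $\HH^1(X,\cV(2H-4F))$ with the cokernel of the connecting map $\partial\colon \HH^0(X,\cO_X(H-F))\to\HH^1(X,\cO_X(2H-4F))$. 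Since $\hh^1(X,\cO_X(2H-4F))=1$ (this value is computed in Example \ref{V-W}), it suffices to prove $\partial\ne 0$. Now $\partial$ is cup product with the extension class $e$ defining $\cV$, which lies in $\Ext^1_X(\cL,\cO_X)=\HH^1(X,\cL^*)=\HH^1(X,\cO_X(H-3F))$ and, $\cV$ being a nonsplit extension, is a generator of this $1$-dimensional space.

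The only real obstacle is then to show this cup product is nonzero. I would detect it by restricting to the negative section $\Delta\in|\cO_X(H-3F)|=|\cL^*|$, a rational curve with $\cO_\Delta(\Delta)\simeq\cO_{\PP^1}(-2)$, and I rely on three facts, each obtained from the ideal sequence of $\Delta$ twisted suitably. First, the kernel of the restriction $\cO_X(2H-4F)\to\cO_X(2H-4F)|_\Delta=\cO_{\PP^1}(-2)$ is $\cO_X(H-F)$, which has $\HH^1=\HH^2=0$; hence $\HH^1(X,\cO_X(2H-4F))\to\HH^1(\Delta,\cO_{\PP^1}(-2))$ is an isomorphism. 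Second, from $0\to\cO_X\to\cO_X(\Delta)\to\cO_\Delta(\Delta)\to 0$ and $\HH^1(X,\cO_X)=\HH^2(X,\cO_X)=0$ on the rational surface $X$, the class $e$ restricts to a generator $\bar e$ of $\HH^1(\Delta,\cO_\Delta(\Delta))=\HH^1(\PP^1,\cO_{\PP^1}(-2))$. Third, the kernel of $\cO_X(H-F)\to\cO_\Delta$ is $\cO_X(2F)=\pi^*\cO_{\PP^1}(2)$, whose $\HH^1$ vanishes, so $\HH^0(X,\cO_X(H-F))\to\HH^0(\Delta,\cO_\Delta)=\bk$ is onto; I pick $s$ restricting to a nonzero constant.

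By naturality of cup product under the inclusion $\Delta\hookrightarrow X$, one has $(s\cup e)|_\Delta=(s|_\Delta)\cup\bar e$, a nonzero constant multiple of $\bar e$, hence nonzero in $\HH^1(\Delta,\cO_{\PP^1}(-2))$; the isomorphism of the first fact then forces $s\cup e\ne 0$ on $X$. Therefore $\partial\ne 0$ is surjective onto the $1$-dimensional target, so $\HH^1(X,\cV(2H-4F))=0$, which is the asserted vanishing $\Ext^1_X(\cL,\cV(H-F))=0$.
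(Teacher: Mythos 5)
Your proof is correct, and its first half coincides with the paper's: both arguments identify $\Ext^1_X(\cL,\cV(H-F))$ with the cokernel of the connecting map $\HH^0(X,\cO_X(H-F))\to\HH^1(X,\cO_X(2H-4F))$, which is cup product with the extension class of $\cV$ (the paper applies $\Hom_X(\cL(F-H),-)$ to the defining sequence of $\cV$, you twist that sequence by $\cO_X(2H-4F)$ and take cohomology — the same computation, using in both cases that $\HH^1(X,\cO_X(H-F))=0$ and that the target is $1$-dimensional). Where you genuinely diverge is in proving that this cup product is nonzero. The paper fixes a section $s$ with zero divisor $C\in|H-F|$, notes that $\rho(s)$ is the image of the generator $\delta$ of $\HH^1(X,\cL^*)$ under the map $\sigma\colon\HH^1(X,\cL^*)\to\HH^1(X,\cL^*(H-F))$ induced by multiplication by $s$, gets surjectivity of $\sigma$ from $\HH^1(C,\cL^*(H-F)|_C)=0$ (using that $C$ is rational of positive degree), and concludes by the dimension count $\hh^1(X,\cL^*)=\hh^1(X,\cL^*(H-F))=1$; in other words, the paper restricts along the divisor of $s$. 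You instead restrict along the \emph{negative section} $\Delta\in|\cL^*|$: you check that restriction gives an isomorphism on $\HH^1(X,\cO_X(2H-4F))$, that the extension class restricts to a generator of $\HH^1(\Delta,\cO_{\PP^1}(-2))$, that some section $s$ of $\cO_X(H-F)$ does not vanish identically on $\Delta$, and you finish by naturality of cup product under pullback. Both mechanisms are sound (and all your intersection-theoretic and cohomological computations check out). Your version has the mild advantage of working with the fixed, canonical curve $\Delta$ rather than a member of $|H-F|$, whose irreducibility and rationality the paper invokes somewhat implicitly (a general member is indeed an irreducible section of $\pi$); the paper's version is a bit shorter, since the restriction-to-$\Delta$ computation enters only once, to evaluate $\hh^1(X,\cL^*(H-F))=1$, a number you legitimately quote from Example \ref{V-W} instead.
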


\begin{proof}
Applying $\Hom_X(\cL(F-H),-)$ to the sequence of Example \ref{V-W}
defining $\cV$ we see that the space under consideration is the
cokernel of the linear map:
\[
\rho : \HH^0(X,\cO_X(H-F)) \to \HH^1(X,\cL^*(H-F))
\]
coming from cup product with the generator $\delta$ of $\HH^1(X,\cL^*)$.

We have to prove that this map is surjective. To see it, consider its
effect on a non-zero element corresponding to a curve $C \subset X$ of
class $H-F$ corresponding to a global section $s$ of $\cO_X(H-F)$. The curve $C$ gives:
\[
0 \to \cL^* \to \cL^*(H-F) \to \cL^*(H-F)|_C \to 0,
\]
and taking cohomology  we get a map $\sigma : \HH^1(X,\cL^*)
\to \HH^1(X,\cL^*(H-F))$, which sends $\delta$ to the image of $s$ via
$\rho$. Also $\HH^1(C,\cL^*(H-F)|_C)=0$ since $C$ is rational and $C \cdot \cL^*(H-F) > 0$.

But working over the exceptional curve $\Delta$, i.e. on the generator of
$\HH^0(X,\cL^*)$, we easily see that
$\hh^1(X,\cL^*(H-F))=1$. Therefore $\sigma$ is surjective (actually an
isomorphism) so $\delta$ has non-zero
image in $\HH^1(X,\cL^*(H-F))$. This says that $\rho$ is surjective.
\end{proof}

\subsubsection{The second reduction of the monad}

In the hypothesis of Lemma \ref{abc}, part \eqref{b}, we proved that $\cE \simeq
\cK_0$ where $\cK_0$ fits into \eqref{K0} and in turn $P$ fits into:
\[
0 \to \cL^{b_{0,0}} \to I \to P \to 0.
\]
Recall also the form \eqref{Isplits} of $I$. In the next lemma we carry out
the final step in the study of $P$.

\begin{lem} \label{reduction}
  Let $\cE$ be an indecomposable initialized ACM non-Ulrich bundle of rank $r>1$
  on $X$ with $b_{0,0} \ne 0$. Then $P$ is a direct sum of copies of
  $\cV(H-F)$, $\cO_X(H-F)$ and $\cO_X$.
\end{lem}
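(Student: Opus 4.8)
The plan is to read off the isomorphism type of $P$ from the subbundle inclusion $\cL^{b_{0,0}} \hookrightarrow I$ of which it is the cokernel. By Lemma \ref{abc}, part \eqref{b}, an initialized $\cE$ satisfies $a=0$ and $Q=0$, so $\cE \simeq \cK_0$ and $P$ is the kernel of the surjection $\cE \to \cL^{b}$ of \eqref{K0}; in particular $P$ is locally free, so $\cL^{b_{0,0}} \hookrightarrow I$ is a subbundle inclusion and I may try to bring its matrix $\mu$ into a normal form. Throughout I would exploit the identification $\cL = \cO_X(3F-H) = \cO_X(-\Delta)$, the ideal sheaf of the negative section, for which $\cL|_\Delta \simeq \cO_{\PP^1}(2)$ and $\cO_X(iF)|_\Delta \simeq \cO_{\PP^1}(i)$.

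First I would bound the summands of $I \simeq \bigoplus_{i \ge 0}\cO_X(iF)^{p_i}$. Global generation of $I$ (it is a quotient of $\cO_X^{a_{0,0}}$ by \eqref{I}) gives $i \ge 0$, while the ACM property of $\cE$ forces $i \le 3$ by the same computation as in the proof of Lemma \ref{abc}, part \eqref{c}, since $\HH^1(X,\cO_X(iF-2H)) \ne 0$ for $i \ge 4$. The key point is then to exclude $i=3$: a component $\cL \to \cO_X(3F)$ is a section of $\cHom(\cL,\cO_X(3F)) = \cO_X(H)$, and I expect a surviving copy of $\cO_X(3F)$ in $P$ to be forced to be a direct summand of $\cE$, because $\Ext^1_X(\cL,\cO_X(3F)) = \HH^1(X,\cO_X(H)) = 0$ also lets it split off through the extension $0 \to P \to \cE \to \cL^{b} \to 0$; this would contradict the indecomposability of $\cE$ together with $r>1$. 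Hence $p_3=0$ and $I \simeq \cO_X^{p_0} \oplus \cO_X(F)^{p_1} \oplus \cO_X(2F)^{p_2}$.

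Next I would identify the three building blocks as cokernels of $\cL$. Since $\Hom_X(\cL,\cO_X(iF)) = \HH^0(X,\cO_X(\Delta+iF))$ vanishes on $\Delta$ for $i \le 1$ (as $(\Delta+iF)\cdot\Delta < 0$) and is base-point free off $\Delta$ for $i=2$ (as $(\Delta+2F)\cdot\Delta=0$), a copy of $\cL$ mapped with no common zero into $\cO_X \oplus \cO_X(2F)$ has locally free cokernel the line bundle $\cO_X(H-F)$, while a copy of $\cL$ mapped suitably into $\cO_X(F)^2 \oplus \cO_X(2F)$ yields $\cV(H-F)$; the latter I would recognise by comparing with the extension $0 \to \cO_X(H-F) \to \cV(H-F) \to \cO_X(2F) \to 0$ obtained by twisting Example \ref{V-W}, noting $\pi_*\cV(H-F) \simeq \cO_{\PP^1}(1)^2 \oplus \cO_{\PP^1}(2)$. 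Untouched copies of $\cO_X$ survive as $\cO_X$. To globalise this into a decomposition $P \simeq \cV(H-F)^x \oplus \cO_X(H-F)^y \oplus \cO_X^z$, I would put $\mu$ in block-diagonal form, peeling off these blocks one at a time, using Lemma \ref{varipezzi} to recognise each diagonal block and the vanishing $\Ext^1_X(\cL,\cV(H-F))=0$ of Lemma \ref{nienteV} to guarantee that the $\cV(H-F)$-blocks are genuine direct summands and do not recombine with the remaining copies of $\cL$.

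The hard part will be this last, global normal-form step. The fibrewise and $\Delta$-restrictions pin down the local shape of $\mu$ and show that only the degrees $i=0,1,2$ can occur with a locally free cokernel, but promoting this to an honest direct-sum decomposition — ruling out any indecomposable cokernel outside the three listed, and in particular controlling the interaction between the $\cO_X(F)$- and $\cO_X(2F)$-columns that produce the rank-two block $\cV(H-F)$ — is where the subbundle condition together with the $\Ext^1$-vanishings must be used most carefully.
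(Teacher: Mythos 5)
Your setup is sound and matches the paper's: Lemma \ref{abc} gives $a=0$ and $Q=0$, so $P$ sits in $0 \to \cL^{b_{0,0}} \to I \to P \to 0$ with $I \simeq \bigoplus_{i \ge 0}\cO_X(iF)^{p_i}$, and your bound $p_i=0$ for $i\ge 4$ via ACM-ness does go through. But the core of your argument --- the ``global normal-form step'' that you yourself flag as the hard part --- is a genuine gap, not a technicality to be checked carefully. Block-diagonalizing $\mu\colon \cL^{b_{0,0}} \to I$ is a matrix problem over the spaces $\Hom_X(\cL,\cO_X(iF)) = \HH^0(X,\cO_X(H+(i-3)F))$, whose dimensions on $S(1,3)$ are $1,2,4,6$ for $i=0,1,2,3$, and none of the tools you cite can handle it: Lemma \ref{varipezzi} requires $\ext^1_X(\cB,\cA)=1$ and says nothing about Hom-matrices with entries in spaces of dimension up to $4$ or $6$ (the cleaning arguments needed here involve, e.g., surjectivity of multiplication maps of linear systems, which you never address), while Lemma \ref{nienteV} controls extensions of $\cL$ by $\cV(H-F)$ and cannot prevent columns from recombining inside $\mu$. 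Moreover, your exclusion of $p_3=0$ is logically premature: a copy of $\cO_X(3F)$ in $I$ need not persist as a \emph{direct summand} of $P$ --- it can be absorbed by $\mu$ into an indecomposable piece of another shape --- so the splitting argument via $\Ext^1_X(\cL^b,\cO_X(3F))=0$ can only be run \emph{after} the decomposition of $P$ is known, which is exactly what is missing.

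The paper's proof avoids this matrix problem entirely by changing the presentation: it re-applies the functorial resolution of \S\ref{ab} to the twist $\cE'=P(-H)$ (legitimate because ACM-ness of $\cE$ and the monad give $\HH^1(X,\cE'(tH))=0$ for $t\le 0$), computes $a'_{0,0}=a'_{0,1}=a'_{2,0}=a'_{2,1}=0$ and $a'_{1,1}=b_{0,0}$, and so exhibits $\cE'$ as an extension $0 \to \cO_X(-F)^{b_{0,0}} \to \cE' \to Q' \to 0$ with $Q'\simeq \bigoplus_{j\ge 1}\cL(-jF)^{q_j}$. In \emph{that} presentation the relevant extension spaces $\Ext^1_X(\cL(-jF),\cO_X(-F))$ all have dimension $\le 1$, so Lemma \ref{varipezzi} applies verbatim and splits $\cE'$ into copies of $\cO_X(-F)$, $\cV(-F)$ and $\cL(-jF)$ with $j\le 4$; twisting back by $H$, indecomposability of $\cE$ together with $\Ext^1_X(\cL,\cL(H-jF))=0$ for $j\le 2$ kills $j\le 2$, and $a=0$ kills $j=4$, leaving exactly $\cV(H-F)$, $\cO_X(H-F)$ and $\cO_X$. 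If you want to salvage your approach, the realistic fix is to reproduce this reversed presentation of $P(-H)$ rather than to attack the normal form of $\mu$ directly: the whole point of the paper's argument is to trade a wild-looking Hom-matrix problem for an Ext-matrix problem over one-dimensional spaces.
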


\begin{proof}
  We know by Lemma \ref{abc} that $a=0$ and $Q=0$. Define $\cE'=P(-H)$.
  Observe that, since $\cE$ is ACM and $\HH^0(X,\cU)=0$,
  from the monad we
  get $\HH^1(X,\cE'(tH)) = 0$ for $t\le 0$. 
  This vanishing allows to apply the construction of
  \S \ref{ab} to $\cE'$. We can then
  define the corresponding integers $a'_{i,j}$, $a'$, $b'_{i,j}$ and
  $b'$, together with the associated sheaves $Q'$ and $J'$.
  It is now easy to compute:
  \[
  a'_{0,0}= a'_{0,1} = a'_{2,0} = a'_{2,1} = 0, \qquad a'_{1,1}=b_{0,0}.
  \]

  We rely on exact sequences analogous to \eqref{per Qs}, \eqref{J} and \eqref{Qs} to deduce that $\cE'$ fist into:
  \[
  0 \to \cO_X(-F)^{b_{0,0}} \to \cE' \to Q' \to 0,
  \]
  with $Q'$ appearing as kernel:
  \[
  0 \to Q' \to \cL(-F)^{b'_{2,1}} \to \cL^{b'_{2,0}} \to 0.
  \]
  This implies that there are integers $q_j$ such that $Q'$ takes the form:
  \[
  Q' \simeq \bigoplus_{j \ge 1} \cL(-jF)^{q_j}.
  \]

  Now $\ext^1_X(\cL(-jF),\cO_X(-F))$ equals $0$
  for $j \ge 2$, and $1$ for $j = 1$. So by Lemma
  \ref{varipezzi} $\cE'$ is a direct sum of copies of $\cO_X(-F)$, 
  $\cV(-F)$ and $\cL(-jF)$ for $j \ge 1$. Actually
  $\HH^1(X,\cL(-H-jF))\ne 0$ for $j\ge 5$ implies that only $j \le 4$
  can occur. Note that $\cO_X(-F)$ or
  $\cV(-F)$ must occur because $a'_{1,1} \ne 0$.

  We now add $H$ and use indecomposability of $\cE$, after
  observing that $\Ext^1_X(\cL,\cL(H-jF))=0$ for $j \le 2$.
  Because $\cE$ is not a line bundle, this implies that only $j=3,4$
  can occur in the decomposition of $P$. But $\cL(H-4F) \simeq
  \cO_X(-F)$ does not appear because Lemma \ref{abc} says that $a=0$.
  Summing up, besides $\cV(H-F)$ and $\cO_X(H-F)$, only $j=3$ is allowed, giving $\cO_X$.
\end{proof}

\subsubsection{Proof of Theorem \ref{S(1,3)-is-tame}}

After the second reduction we are in position to prove that $S(1,3)$
is of tame CM type. So let $\cE$ be an indecomposable ACM bundle on
$X=S(1,3)$. In case $\cE$ is Ulrich, Proposition \ref{prop-ulrich}
shows assertions \eqref{ocio} and \eqref{occhio2}, so we only have to
take case of the non-Ulrich case.

We can  assume that $\cE$ is initialized of rank $r>1$, so that Lemma
\ref{reduction} applies. Then $\cE$ fits into:
\[
0 \to P \to \cE \to \cL^b \to 0,
\]
where $P$ is a direct sum of copies of $\cV(H-F)$, $\cO_X(H-F)$ and $\cO_X$.

By Lemma \ref{nienteV}, since $\cE$ is indecomposable we get that
either $\cE \simeq \cV(H-F)$, or $\cV(H-F)$ does not occur in $P$. We study this
second case again by means of a monad. Indeed, factoring out all
copies of $\cO_X(H-F)$ from $P$ we write $\cE$ as an extension by
$\cL^b$ of a quotient bundle $Q''$ of $\cE$ which we 
express as extension of copies of $\cO_X$ and $\cL$. This quotient bundle is thus a direct 
sum of copies of $\cO_X$, $\cL$ and $\cV$ by Lemma \ref{varipezzi}.

But again $\Ext^1_X(\cO_X,\cO_X(H-F))=0$ so in fact $\cO_X$ does not
occur as direct summand of $Q''$. Moreover, since $\cV$ has rank $2$ with $\wedge^2 \cV \simeq
\cL$ we have $\cV \ts \cL^* \simeq \cV$, so from Lemma \ref{nienteV}
we deduce $\Ext^1_X(\cV,\cO_X(H-F))=0$. Hence either $\cE$ is $\cV$, or
$\cV$ does not appear either as direct summand of $Q''$.

We have proved that $\cE$ is then an extension of copies of $\cO_X(H-F)$ and $\cL$,
which is hence necessarily isomorphic to $\cW$, again by Lemma \ref{varipezzi}.
The proof of Theorem \ref{S(1,3)-is-tame} is now complete.

\begin{rmk} 
  Theorem \ref{quartic-is-tame} (and hence Theorem \ref{main-tame}) hold, except for part
  \eqref{occhio}, over an arbitrary field $\bk$. In turn, 
  if $\bk$ is not
  algebraically closed, part  \eqref{occhio} holds for geometrically
  indecomposable $\cE$ (i.e., $\cE$ is indecomposable over the
  algebraic closure of $\bk$).
  Otherwise, indecomposable ACM bundles over $\bk$ are obtained from
  companion block matrices associated with irreducible polynomials in
  one variable over $\bk$, cf. again \cite{burgisser-clausen-shokrollahi}.
\end{rmk}

\section{Rigid bundles on scrolls of higher degree}

\label{rigid section}

In this section we set $X=\FF_\epsilon$ with $\epsilon = 0,1$, so
 $X$ is a del Pezzo surface, and we assume $d_X \ge 5$, hence
 $X$ is of wild representation type.
For the rest of the paper, we assume $\mathrm{char}(\bk)=0$.

\subsection{Construction of rigid ACM bundles} \label{rigid construction}

Let $s \ge 0$ be an integer, and consider a vector of $s$ 
 integers
 $\vk=(k_1,\ldots,k_s)$.
With $\vk$ we associate the word $\sigma^\vk$ of the braid group $B_4$ by:
\[
\sigma^\emptyset = 1, \quad \sigma^{\vk} = \sigma_1^{k_1} \sigma_2^{k_2} \sigma_1^{k_3}
\sigma_2^{k_4} \cdots \in B_3 = \langle \sigma_1,\sigma_2 \mid
\sigma_1\sigma_2\sigma_1 = \sigma_2\sigma_1\sigma_2\rangle.
\]
This means that $\sigma^{\vk}$ belongs to the copy of $B_3$ in $B_4$
consisting of braids not involving the first strand.
Clearly, up to adding $k_{i-1}$ and $k_{i+1}$, we may assume $k_i \ne 0$, for $i>1$. 

This subgroup operates on $3$-terms exceptional collections over $X$, in view of the
$B_4$-action on full exceptional collections described in
\S \ref{braid-definition}. Set $\bB_\emptyset$ for the subcollection
$(\cL[-1],\cO_X(-F),\cO_X)$ of $\bC_\emptyset$, cf. \eqref{C1}.
Put $\bB_\vk=\sigma^\vk \bB_\emptyset$ for the subcollection
of $\bC_\vk=\sigma^\vk\bC_\emptyset$ obtained by mutation via $\sigma^{\vk}$.
Given a $3$-term exceptional collection $\bB=(\cS_1,\cS_2,\cS_3)$ we
consider:
\[
v =  (\chi(\cS_2,\cS_3),\chi(\cS_1,\cS_3),\chi(\cS_1,\cS_2)) \in \Z^3.
\]
The group $B_3$ thus operate on $\Z^3$, by sending a vector
$v=(v_1,v_2,v_3)$ to:
\begin{align*}
  \sigma_1 : (v_1,v_2,v_3) \mapsto (v_1v_3-v_2,v_1,v_3), &&
  \sigma_2 : (v_1,v_2,v_3) \mapsto (v_1,v_1v_2-v_3,v_2).
\end{align*}
This action factors through the center of $B_3$, so that it
actually defines an operation of the modular group
$\mathrm{PSL}(2,\Z)$ on $\Z^3$.

The inverse of $\sigma_1$ and $\sigma_2$ operate by similar formulas.
For the basic collection
$\bB_\emptyset$ we have:
\[
v=v^\emptyset = (2,d_X-4,d_X-2).
\]
We set $v^\vk \in \Z^3$ for the vector corresponding to $\bB_\vk$, 
i.e. $v^\vk=\sigma^\vk.v^\emptyset$.

Next, we set $\bar t \in \{0,1\}$ for the remainder of the
division of an integer $t$ by $2$.
Given $\vk=(k_1,\ldots,k_s)$ and $t\le s$, we write the truncation
$\vk(t)=(k_1,\ldots,k_{t-1})$.
We define:
\begin{equation}
  \label{positive}
  \fbK = \{\vk = (k_1,\ldots,k_s) \mid (-1)^{t} k_{t-1}
  (v^{\vk(t)})_{2 \bar t +1} \le 0, \forall t \le s\}.
\end{equation}
Note that, if $\vk \in \fbK$, then $\vk(t)\in \fbK$,  $\forall t\le s$.
Observe that belonging to $\fbK$ imposes no restriction on the last
coordinate $k_s$ of $\vk$.

Next we provide an existence result for rigid ACM
bundles indexed by $\vk \in \fbK$. 
There might be $\vk \ne \vk'$ such that $\sigma_\vk=\sigma_{\vk'}$ in
view of the Braid group relation $\sigma_1\sigma_2\sigma_1 = \sigma_2\sigma_1\sigma_2$.
The correspondence described by the next theorem has to be understood up to this
ambiguity. The next result proves the existence part of Theorem
\ref{alla fine je l'abbiamo fatta}.

\begin{thm} \label{Fk}
  To any $\vk$ in $\fbK$ there corresponds 
  an exceptional
  ACM bundle, denoted by $\fF_{\vk}$, sitting as middle term of the exceptional
  collection $\bB_\vk$ obtained  from
  $\bB_\emptyset$ by mutation via
  $\sigma^{\vk}$. 
\end{thm}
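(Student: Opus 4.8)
The plan is to proceed by induction on the length $s$ of $\vk$, treating each block $\sigma_i^{k_j}$ of the word $\sigma^\vk$ at once. For $\vk=\emptyset$ the assertion is clear: the middle term of $\bB_\emptyset=(\cL[-1],\cO_X(-F),\cO_X)$ is the line bundle $\cO_X(-F)$, which is exceptional and ACM by Lemma \ref{linebundles}. I would carry along the stronger inductive claim that every object of $\bB_\vk$, together with the fixed first object $\cL(-F)[-1]$ of $\bC_\vk$, is an honest vector bundle up to a single shift, with a prescribed shift pattern, and that the middle object $\fF_\vk=\cS_2$ is a genuine bundle concentrated in degree $0$ which is moreover ACM. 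Because $\fF_\vk$ is by construction a term of an exceptional collection it is automatically an exceptional object of $\bD^b(X)$, so the whole content is to show that $\cS_2$ never leaves degree $0$ and has no intermediate cohomology.

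The engine is the regular/irregular dichotomy recalled in \S\ref{braid-definition}. When the pair of $\bB_\vk$ being moved is regular, the right (resp. left) mutation is the cokernel (resp. kernel) of the canonical evaluation, so it is an honest sheaf and one has a short exact sequence of bundles; when the pair is irregular, iterating the mutation within a single block $\sigma_i^{k_j}$ reproduces exactly the Fibonacci recursion of the associated bundles $\cG_k$, and the sequences \eqref{NP>}--\eqref{NP<}, together with the special sequences around $k=0$ displayed in \S\ref{braid-definition}, describe the outcome. In either regime a mutation can leave the degree-$0$ locus only by landing on some $\cG_k$ with $k\le 0$, i.e. on a bundle shifted to degree $1$; the purpose of the defining inequalities of $\fbK$ is precisely to forbid this for the middle slot. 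Concretely, the sign $(-1)^t k_{t-1}(v^{\vk(t)})_{2\bar t+1}\le 0$ ties the sign of the exponent $k_{t-1}$ (which selects right versus left mutation) to the sign of the controlling Euler characteristic $(v^{\vk(t)})_{2\bar t+1}$ recorded by the $B_3$-action $v\mapsto\sigma_i.v$ on $\Z^3$; this is exactly the condition that keeps $\cS_2$ on the $\cG_k$, $k\ge 1$ side (or makes the regular evaluation inject or surject in the way that produces a degree-$0$ bundle) after the block. Consistently, $\fbK$ imposes no condition on the final exponent $k_s$, as no further mutation follows it.

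The step I expect to be the crux is this last translation: proving that the combinatorial membership $\vk\in\fbK$ is equivalent to the geometric statement that $\cS_2$ stays in degree $0$ throughout the mutations. This requires matching the $\Z^3$-dynamics of the $v$-vector with the genuine cohomological degrees and shift pattern of the mutated objects, checking that the coordinate $2\bar t+1$ does select the slot responsible for a possible degree shift, and handling with care the transition between positive and negative exponents, where the special sequences of \S\ref{braid-definition} intervene and where $\cG_0,\cG_1$ play an exceptional role.

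Finally, ACM-ness would be propagated along the very same sequences. If $(\cA,\cB)$ is an exceptional pair of ACM bundles and its mutation is an honest sheaf, then applying cohomology to the mutation short exact sequence reduces the vanishing of the intermediate cohomology $\HH^1_*(X,-)$ of the output to an injectivity statement on $\HH^2_*(X,-)$ of the evaluation map; by Serre duality this is dual to the surjectivity on global sections of the coevaluation of the twisted dual pair (obtained by applying $(-)^*\ts\omega_X$), which is exactly the defining property of a regular pair. The irregular case is identical using \eqref{NP>}--\eqref{NP<}, whose outer terms are twists of the ACM line bundles $\cO_X(-F)$ and $\cL$ (Lemma \ref{linebundles}). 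Since the initial objects are ACM, the induction then delivers that $\fF_\vk$ is an exceptional ACM bundle, as claimed.
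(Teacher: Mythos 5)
Your skeleton is the paper's: induction on the blocks $\sigma_i^{k_j}$, the Fibonacci objects $\cG_k$ attached to an irregular pair, ACM-ness propagated through the extensions \eqref{NP>}--\eqref{NP<}, and exceptionality for free from mutation theory. But the step you yourself single out as the crux rests on a false characterization of $\fbK$, so there is a genuine gap. You claim that $\vk\in\fbK$ is equivalent to ``$\cS_2$ stays in degree $0$ throughout the mutations'', the inequalities of $\fbK$ serving to ``forbid'' the middle slot from landing on a shifted bundle $\cG_k$, $k\le 0$. This fails already for $s=1$: every $\vk=(k_1)$ lies in $\fbK$ (the defining inequalities are vacuous for $s=1$), yet $\bB_{-3}=(\fU_{-3}[-1],\fU_{-2}[-1],\cO_X)$, whose middle term $\fU_{-2}[-1]$ is concentrated in degree $1$. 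More generally, in two of the four cases of Lemma \ref{it's irregular} the middle term of $\bB_\vk$, $\vk\in\fbK$, sits in degree $1$; the theorem has to be read as $\fF_\vk=|\cS_2|$, i.e.\ the bundle underlying the middle term up to shift, not as $\cS_2$ being a sheaf. Your own observation that $\fbK$ puts no constraint on the last exponent $k_s$ is in fact inconsistent with your reading: the last block does move the middle slot, so a condition governing the degree of $\cS_2$ would have to constrain $k_s$ as well.

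What the inequalities of $\fbK$ really encode --- and what the key lemma of the paper proves --- is that at every stage the pair about to be mutated by the next block is an \emph{irregular} exceptional pair, with a prescribed, case-dependent shift pattern for all three objects (the middle one included, possibly in degree $1$). Irregularity is exactly what licenses the $\cG_k$-machinery of \S\ref{braid-definition}; the objects produced by a block are then shifts of bundles which are extensions of the bundles constructed at the previous stage, hence ACM by induction. (Your remark that the outer terms of \eqref{NP>}--\eqref{NP<} are twists of the ACM line bundles $\cO_X(-F)$ and $\cL$ is correct only for the first block; afterwards the outer terms are the inductively constructed bundles.) Your fallback branch in which a \emph{regular} pair gets mutated is moot --- for $\vk\in\fbK$ this never happens --- and it cannot carry the argument precisely because the link between $\fbK$ and irregularity is what is missing from your proof. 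The repair is to replace your claimed equivalence by: $(k_1,\ldots,k_{s+1})\in\fbK$ if and only if the pair of $\bB_{(k_1,\ldots,k_s)}$ acted on by the $(s+1)$-st block is irregular; this is proved by induction on $s$, tracking simultaneously the shift pattern of $(\cS_1,\cS_2,\cS_3)$ and the sign of the relevant coordinate of $v^{\vk}$, which is the content of Lemma \ref{it's irregular}.
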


\begin{proof}
We first give a step-by-step algorithm to construct $\fF_{\vk}$.
\begin{step} \label{step1}
  Take $s=1$ and start with $\bB=\bB_\emptyset$, cf. \eqref{C1}. 
  Observe  that the
  exceptional pair $(\cS_1,\cS_2)$ is irregular, with
  $\cS_1=|\cS_1|[-1]$, $\cS_2=|\cS_2|$,
  $(\sigma^1.v)_3=\chi(\cS_1,\cS_2) = d_X-2 > 0$. 
  Consider then the objects $\cG_{k}$ in the notation of \S
  \ref{braid-definition}, and set
  $\fF_{\vk} = |\cG_{k_1+1}|$.
  In the notation of \S \ref{ulrich-notation}, we have
  $\fF_{k_1}\simeq \fU_{k_1+1}$, i.e. the case $s=1$ corresponds to Ulrich bundles.

  To prepare the next step, modify $\bB$ by keeping 
  $\cS_3$ unchanged, but replacing $\cS_1$ with $\cG_{k_1}$ and $\cS_2$ with $\cG_{k_1+1}$.
  We observed that \eqref{positive} imposes no condition for
  $s=1$ so $k_1$ is arbitrary in $\Z$ if $s=1$.  
  On the other hand, for $s\ge 2$, taking $t=2$
  leads to assume $k_1(\sigma^{\vk(2)}.v)_1 \le 0$. One can check right away
  that this simply
  means $k_1 \le 0$.
\end{step}  

\vspace{-0.5cm}
\begin{center}
  \begin{figure}[h!]
      \begin{tikzpicture}
        \braid[very thick, rotate=90, width = 0.3cm, height = 1.0cm, number of strands = 4, style strands={1}{red}, style
        strands={2}{blue},style strands={3}{purple}, style strands={4}{orange}]
        s_2 s_2 s_2;
      \end{tikzpicture}
  \end{figure}
\end{center}

\vspace{-0.5cm}
For instance, take $\vk = (-3)$, i.e. $\sigma=\sigma_1^{-3}$, so $\fF_{-3}=\fU_{-2}$. We draw
the corresponding braid of $B_4$, and the associated mutations,
starting from the basic exceptional collection $\bB_\emptyset$:
\begin{align*}
   \sigma_1^{-1} \bB_\emptyset = \bB_{-1} & =(\fU_{-1}[-1],\fU_{0}[-1],\cO_X), && \mbox{for $k_1=-1$}; \\
   \bB_{-2}& =(\fU_{-2}[-1],\fU_{-1}[-1],\cO_X), && \mbox{for $k_1=-2$}; \\
   \bB_{-3}& =(\fU_{-3}[-1],\fU_{-2}[-1],\cO_X), && \mbox{for $k_1=-3$}. 
\end{align*}

\begin{step} 
  Take $s\ge 2$ even. 
  It turns out (cf. Lemma \ref{it's
    irregular}) that the condition $\vk \in \fbK$ implies that, in the collection $\bB$ defined inductively by
  the truncation $\sigma^{\vk(s)}$,
  the pair $(\cS_2,\cS_3)$ is irregular. Then, it can be used to construct the objects $\cG_{k}$ as in \S\ref{braid-definition}.
  Next, we define $\fF_{\vk} = |\cG_{k_s}|$ and modify $\bB$ by keeping $\cS_1$ unchanged
   and replacing the pair $(\cS_2,\cS_3)$ with
   $(\cG_{k_s},\cG_{k_s+1})$.
\end{step}

\vspace{-0.2cm}
\begin{center}
  \begin{figure}[h!]
      \begin{tikzpicture}
        \braid[very thick, rotate=90, width = 0.3cm, height = 1cm, number of strands = 4, style strands={1}{red}, style
        strands={2}{blue},style strands={3}{purple}, style strands={4}{orange}]
        s_2 s_2 s_2 s_3^{-1} s_3^{-1};
      \end{tikzpicture}
  \end{figure}
\end{center}

\vspace{-0.5cm}
Take e.g. $\sigma=\sigma_1^{-3}\sigma_2^2$, i.e. $\vk =
(-3,2)$. The associated mutations start from $\bB_{-3}$ and give:
\begin{align*}
  & \bB_{-3}=(\fF_{-4}[-1],\fF_{-3}[-1],\cO_X), && \mbox{for
    $\vk=(-3)$}, && \cO_X=\fF_{-3,1};  \\
  & \bB_{-3,1}=(\fF_{-4}[-1],\fF_{-3,1},\fF_{-3,2}), && \mbox{for    $\vk=(-3,1)$}; \\
  & \bB_{-3,2}=(\fF_{-4}[-1],\fF_{-3,2},\fF_{-3,3}), && \mbox{for  $\vk=(-3,2)$}.
\end{align*}

\begin{step} 
  Take $s \ge 3$ odd. Again apply Lemma \ref{it's
    irregular} to the collection $\bC_{\vk(s)}$.
  This time the irregular pair is $(\cS_1,\cS_2)$, and we use it 
  to construct the objects $\cG_{k}$, cf. \S\ref{braid-definition}.
  Put $\fF_{\vk} = |\cG_{k_s+1}|$.
  As in Step \ref{step1}, modify the collection $\bB$ by keeping 
  $\cS_3$ and replacing $\cS_1$ with $\cG_{k_s}$ and $\cS_2$ with $\cG_{k_s+1}$.
\end{step}

\vspace{-0.4cm}
\begin{center}
  \begin{figure}[h!]
      \begin{tikzpicture}
        \braid[very thick, rotate=90, width = 0.3cm, height = 1cm, number of strands = 4, style strands={1}{red}, style
        strands={2}{blue},style strands={3}{purple}, style strands={4}{orange}]
        s_2 s_2 s_2 s_3^{-1} s_3^{-1} s_2 s_2;
      \end{tikzpicture}
  \end{figure}
\end{center}

\vspace{-0.5cm}
Take for example $\vk=(-3,2,-2)$. This lies in $\fbK$ for $X=S(2,3)$. 
The bundle $\fF=\fF_{-3,2,-1}$ has canonical slope
$c_1(\fF) \cdot c_1(\omega_X)/\rk(\fF)=37/216$. The associated mutations give:
\begin{align*}
  & \bB_{-3,2}=(\fF_{-4}[-1],\fF_{-3,2},\fF_{-3,3}), && \mbox{for $\vk=(-3,2)$}; \\
  & \bB_{-3,2,-1}=(\fF_{-3,2,-1}[-1],\fF_{-4}[-1],\fF_{-3,3}), && \mbox{for $\vk=(-3,2,-1)$}; \\
  & \bB_{-3,2,-2}=(\fF_{-3,2,-2}[-1],\fF_{-3,2,-1}[-1],\fF_{-3,3}), && \mbox{for $\vk=(-3,2,-2)$}.
\end{align*}

Having this in mind, it is clear that $\fF_{\vk}$ is exceptional,
by the basic properties of mutations. Also, by induction on $s$ we see
that $\fF_\vk$ is an ACM bundle, as it is an extension of ACM bundles, in view of the Fibonacci sequences \eqref{NP>} and \eqref{NP<}.
To conclude we need the following lemma.
\end{proof} 

\begin{lem}  \label{it's irregular}
Let $\vk \in \fbK$ and let $k_{s+1} \in \Z \setminus \{0\}$.
Set $\bB=\bB_{\vk}=(\cS_1,\cS_2,\cS_3)$.
For $s$ even, $(\cS_{1},\cS_{2})$ is an irregular
pair if and only if:
\begin{enumerate}
\item \label{tutto<} $k_{s}<0$, $(\sigma^{\vk}.v)_3<0$, in
  which case $\cS_i=|\cS_i|[-1]$ for all $i$, or:
\item $k_{s}>0$, $(\sigma^{\vk}.v)_3>0$, so
   $\cS_1=|\cS_1|[-1]$, $\cS_i=|\cS_i|$ for $i=2,3$.
\suspend{enumerate}
For $s$ odd, $(\cS_{2},\cS_{3})$ is an irregular
pair if and only if:
\resume{enumerate}
\item $k_{s}<0$, $(\sigma^{\vk}.v)_1>0$, 
  so $\cS_3=|\cS_3|$,  $\cS_i=|\cS_i|[-1]$ for $i=1,2$;
\item \label{tutto>} $k_{s}>0$ and $(\sigma^{\vk}.v)_1<0$, in
  which case $\cS_i=|\cS_i| $ for all $i$.
\end{enumerate}
Finally, $(k_1,\ldots,k_{s+1})$ lies in $\fbK$ if and only if one the
cases \eqref{tutto<} to \eqref{tutto>} occurs.
\end{lem}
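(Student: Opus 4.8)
The statement is really the inductive step that keeps the construction of Theorem~\ref{Fk} going, so the plan is to prove it by induction on the length $s$, carrying along as inductive data the shifts $\delta_1,\delta_2,\delta_3$ of the three objects of $\bB_\vk=(\cS_1,\cS_2,\cS_3)$ (where $\cS_i=|\cS_i|[\delta_i]$) together with the signs of the entries of $v^\vk$. The device that makes all four alternatives uniform is the following translation, available because $X=\FF_\epsilon$ is del Pezzo: by \cite[Proposition 5.3.5]{gorodentsev-kuleshov} an exceptional pair $(\cS_j,\cS_k)$ of shifted bundles in the collection has a single nonzero $\Ext^i_X(|\cS_j|,|\cS_k|)$ with $i\in\{0,1\}$, so that $\chi(|\cS_j|,|\cS_k|)>0$ in the regular case and $<0$ in the irregular one; since $\chi(\cS_j,\cS_k)=(-1)^{\delta_j+\delta_k}\chi(|\cS_j|,|\cS_k|)$, the pair is irregular exactly when $(-1)^{\delta_j+\delta_k}\chi(\cS_j,\cS_k)<0$. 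As $\chi(\cS_1,\cS_2)=(v^\vk)_3$ and $\chi(\cS_2,\cS_3)=(v^\vk)_1$, once the $\delta_i$ are in hand the regular/irregular dichotomy for the pair acted on by the next generator is read off from the sign of the relevant entry of $v^\vk$; matching $(-1)^{\delta_j+\delta_k}$ against that sign is precisely what produces the four possibilities \eqref{tutto<}--\eqref{tutto>}.

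The core of the induction is therefore the determination of the shifts. Going from $\bB_{\vk(s)}$ to $\bB_\vk$ is a single Fibonacci mutation $\sigma_1^{k_s}$ (for $s$ odd) or $\sigma_2^{k_s}$ (for $s$ even), which by the recipe of Theorem~\ref{Fk} replaces the active pair by the consecutive objects $(\cG_{k_s},\cG_{k_s+1})$ of \S\ref{braid-definition}. The sign of $k_s$ fixes the range of indices, hence the shift of the two new objects: if $k_s<0$ both indices are $\le0$ and the objects sit in degree $1$ (shift $-1$), whereas if $k_s>0$ both indices are $\ge1$ and the objects are honest bundles (shift $0$). The one object untouched by the mutation retains its previous shift, and to see that this equals the value claimed in each case I would feed in the inductive hypothesis for $\bB_{\vk(s)}$ together with the $\fbK$-inequality at level $s$, which --- combined with the inductively known sign of the corresponding entry of $v^{\vk(s)}$ --- pins down the sign of $k_{s-1}$ and thereby the shift of that fixed object (this is the computation already flagged in Step~\ref{step1}, where $t=2$ forces $k_1<0$). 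Assembling these gives exactly the shift vectors $(-1,-1,-1)$, $(-1,0,0)$, $(-1,-1,0)$, $(0,0,0)$ of cases \eqref{tutto<}--\eqref{tutto>}. I expect this to be the main obstacle: one must keep the cohomological degrees straight through the mutations, and in particular across the jump of the shift of $\cG_k$ between $k=0$ and $k=1$, which is governed by the two special sequences around $0$ in \S\ref{braid-definition} and has to be tracked in lock-step with the explicit action of $\sigma_1,\sigma_2$ on $v$.

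The concluding equivalence with $\fbK$ is then bookkeeping. For $\vk'=(k_1,\dots,k_{s+1})$ the only condition not already guaranteed by $\vk\in\fbK$ is the one at $t=s+1$, namely $(-1)^{s+1}k_s\,(v^{\vk})_{2\overline{s+1}+1}\le0$; since $2\overline{s+1}+1$ is $3$ when $s$ is even and $1$ when $s$ is odd, this inequality asserts precisely that $k_s$ and the relevant entry of $v^\vk$ carry the sign combination singled out in one of \eqref{tutto<}--\eqref{tutto>}. The last thing to check is that this entry is nonzero, so that the inequality genuinely distinguishes irregular from regular: this is again the del Pezzo dichotomy, which forbids adjacent members of a collection descended from $\bB_\emptyset$ by mutation from being completely orthogonal, whence $\chi(|\cS_j|,|\cS_k|)\neq0$.
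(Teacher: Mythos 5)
Your main argument is essentially the paper's own proof: the paper likewise reduces the lemma to checking that $\cS_1,\cS_2,\cS_3$ are concentrated in the degrees listed in the four cases --- after which the sign of $(v^{\vk})_3$ (resp.\ $(v^{\vk})_1$) translates directly into the regular/irregular dichotomy of \cite[Proposition 5.3.5]{gorodentsev-kuleshov} --- and it establishes those degrees by induction on $s$, using that each mutation replaces the active pair by consecutive objects $\cG_{k}$, $\cG_{k+1}$ of \S \ref{braid-definition}, which by the Fibonacci sequences \eqref{NP>} and \eqref{NP<} sit in degree $0$ when $k_s>0$ and in degree $1$ when $k_s<0$. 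Your shift bookkeeping, including the criterion that the pair is irregular exactly when $(-1)^{\delta_j+\delta_k}\chi(\cS_j,\cS_k)<0$, is the same argument made a little more explicit, and that part is correct.

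The gap is in your final claim that the relevant entry of $v^{\vk}$ is automatically nonzero because the del Pezzo dichotomy ``forbids adjacent members of a collection descended from $\bB_\emptyset$ by mutation from being completely orthogonal.'' \cite[Proposition 5.3.5]{gorodentsev-kuleshov} says an exceptional pair has \emph{at most} one nonvanishing $\Ext$ group; it does not give at least one, and complete orthogonality of adjacent members does occur in these orbits. Concretely, on $X=S(3,3)$ take $\vk=(0,2)$, which is admissible (only the $k_i$ with $i>1$ are required to be nonzero, and $\vk\in\fbK$ because $k_1=0$ makes the condition at $t=2$ read $0\le 0$): then $\sigma^{\vk}=\sigma_2^2$, $\bB_{(0,2)}=(\cL[-1],\cO_X(F),\cO_X(2F))$, and $\chi(\cL[-1],\cO_X(F))=-\chi(\cO_X(H-4F))=0$, i.e.\ $v^{(0,2)}=\sigma_2^2.(2,2,4)=(2,-2,0)$ has vanishing third entry. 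Consequently $(0,2,k_3)$ satisfies the non-strict inequalities of \eqref{positive} for every $k_3\ne 0$, yet neither of the two cases for $s$ even occurs, since $(v^{(0,2)})_3$ is neither positive nor negative; so the nonvanishing you need for the last ``if and only if'' can genuinely fail, and your justification of it is not valid. To be fair, this degenerate case (strict signs in cases \eqref{tutto<}--\eqref{tutto>} versus non-strict inequalities defining $\fbK$) is passed over in silence by the paper as well, which simply asserts that the two sets of conditions correspond; you at least identified that something must be checked here, but the argument you offer for it is false as stated.
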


  The inequalities of the cases from \eqref{tutto<} to
  \eqref{tutto>} simply correspond to the conditions on $\vk$ given by
  \eqref{positive}, so of course one (and only one) of them is
  verified if and only if $(k_1,\ldots,k_{s+1})$ lies in $\fbK$.

\begin{proof}[Proof of Lemma \ref{it's irregular}]
  Let $s$ be even. Note that, as soon as the objects 
  $\cS_i$ are concentrated in the  
  degree prescribed by the sign of $k_s$, we see that the sign of
  $(\sigma^{\vk(s)}.v)_3$ corresponds precisely to 
  $(\cS_1,\cS_2)$ being a regular or irregular exceptional pair.
  The same happens for odd $s$. Therefore it suffices to check that
  $\cS_1,\cS_2,\cS_3$ are concentrated in the correct degrees, as listed in 
  \eqref{tutto<} to \eqref{tutto>}.

  In turn, this last fact is easy to see by induction on $s$. Indeed,
  each time we pass from $t \le s$ to $t+1$, we operate 
  mutations based on an irregular exceptional pair of bundles
  $(\cP,\cN)=(\cS_{\bar t+1},\cS_{\bar t+2})$.
  So, in the notation of \S \ref{braid-definition}, we set
   $w = \ext^1_X(|\cP|,|\cN|)$, and we see that the bundles
  $\cG_{k_{t+1}}$ and $\cG_{k_{t+1}+1}$ fit as middle term of an
  extension of the form \eqref{NP>} (for 
  $k_{t+1}>0$) or \eqref{NP<} (for  $k_{t+1}<0$). This implies that
  $\cS_1,\cS_2,\cS_3$ are concentrated in the desired degrees.
\end{proof}

\begin{rmk}
  One can define similarly an action of $B_3=\langle \sigma_0,\sigma_1
  \mid \sigma_0\sigma_1\sigma_0 = \sigma_0 \sigma_1\sigma_0\rangle
  \subset B_4$ on the set of three-terms exceptional collections leaving $\cO_X$ fixed. 
  Write $\fH_\vk$ for the bundle associated  in this sense with the word
  $\sigma_0^{k_1} \sigma_1^{k_2}\sigma_0^{k_3} \cdots$. Then we have
  $\fH_{\vk}(-H) \simeq \fF_{-\vk}^* \ts \omega_X$.
  Here is the braid for $\fH_{3,-2,2}$, which has canonical
  slope $253/216$:

\vspace{-0.2cm}

\begin{center}
  \begin{figure}[h!]
      \begin{tikzpicture}
        \braid[very thick, rotate=90, width = 0.3cm, height = 1cm, number of strands = 4, style strands={1}{red}, style
        strands={2}{blue},style strands={3}{purple}, style strands={4}{orange}]
        s_2^{-1} s_2^{-1} s_2^{-1} s_1 s_1 s_2^{-1} s_2^{-1};
      \end{tikzpicture}
  \end{figure}
\end{center}

Also, if $d_X \ge 6$ and $k_1 < 0$, or if $k_1 < -
2$,  it is possible to prove, by induction on $s$, that any $\vk = (k_1,\ldots,k_s)$ with $k_{t-1}k_{t}<0$ for all $t<s$
lies in $\fbK$.  
\end{rmk}

\subsection{Classification of rigid ACM bundles on two rational normal
  scrolls}

Here we assume that $X$ is $S(2,3)$ or $S(3,3)$. So $X$ is a del Pezzo
surface embedded as a CM-wild rational normal scroll of degree $5$ or
$6$.
The next results classifies indecomposable rigid ACM bundles
on $X$, whereby concluding the proof of Theorem \ref{alla fine je
  l'abbiamo fatta}.

\begin{thm} \label{we classify}
  Let $X=S(2,3)$ or $X=S(3,3)$, and let $\cE$ be an indecomposable rigid ACM
  bundle on $X$. Then  there exists
   $\vk = (k_1,\ldots,k_s)$ in $\fbK$ such that, up to
  a twist, $\cE \simeq
  \fF_{\vk}$ or $\cE ^* \ts \omega_X \simeq \fF_{\vk}$.
\end{thm}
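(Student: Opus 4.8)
The plan is to prove Theorem \ref{we classify} by an intrinsic induction on the rank, starting from the resolution of Proposition \ref{prop-abcd} and using Lemma \ref{it's irregular} to record the braid word; I will \emph{not} use transitivity of the $B_4$-action, in keeping with the remark in \S\ref{braid-definition}. First I would replace ``rigid'' by ``exceptional'': since $X = \FF_\epsilon$ with $\epsilon \in \{0,1\}$ is a del Pezzo surface, every rigid bundle on $X$ is a direct sum of exceptional bundles (cf.\ \cite{gorodentsev-kuleshov}), so an indecomposable rigid ACM bundle $\cE$ is exceptional; in particular $\Hom_X(\cE,\cE) = \bk$ and $\Ext^2_X(\cE,\cE) = 0$. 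It therefore suffices to show that, up to a twist and up to the involution $\cE \mapsto \cE^* \otimes \omega_X$, every exceptional ACM bundle is one of the $\fF_\vk$ with $\vk \in \fbK$.

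Next, after normalizing $\cE$ to be initialized, I would apply Proposition \ref{prop-abcd}, which is available precisely because $\epsilon + \vartheta = 3$ and $\vartheta \ge 2$ for both $S(2,3)$ and $S(3,3)$. This presents $\cE$ as an extension of $\cL^b \oplus \cL(-F)^d$ by $\cO_X^c \oplus \cO_X(-F)^a$, i.e.\ as an object assembled from the four exceptional bundles underlying $\bC_\emptyset$, and it isolates the base cases: when $c = d = 0$ the bundle is Ulrich by Proposition \ref{ulrichprop}, and then Proposition \ref{prop-ulrich} and Lemma \ref{aspita} identify the indecomposable ones with the Fibonacci bundles $\fU_k = \fF_{(k_1)}$, the words of length $s = 1$; the rank-one bundles are matched directly against Lemma \ref{linebundles}.

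For the inductive step I would put $\cE$ as the middle term $\cS_2$ of a three-term exceptional collection $\bB = (\cS_1,\cS_2,\cS_3)$ — such a complement exists because on a del Pezzo surface an exceptional bundle can be completed to a full exceptional collection — and extend it to $\bC = (\cL(-F)[-1],\bB)$. When $\cE$ is not a base case, one of the pairs $(\cS_1,\cS_2)$, $(\cS_2,\cS_3)$ is irregular, and the associated left or right mutation replaces $\cE$ by an exceptional ACM bundle: by the Fibonacci sequences \eqref{NP>}--\eqref{NP<} the ranks of the $\cG_k$ grow strictly with $|k|$, so each mutation toward the base pair strictly lowers the rank and the reduction terminates, while the canonical slope $c_1 \cdot c_1(\omega_X)/\rk$ fixes the direction. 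Reading the mutations backwards reconstitutes a word $\vk$; the two admissible directions — fixing $\cL(-F)[-1]$ or fixing $\cO_X$ under the $B_3 \subset B_4$ action — yield $\fF_\vk$ and the dual family $\fH_\vk$ of the Remark after Lemma \ref{it's irregular}, for which $\fH_\vk(-H) \simeq \fF_{-\vk}^* \otimes \omega_X$, which is exactly the alternative $\cE^* \otimes \omega_X \simeq \fF_\vk$.

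The hard part will be showing that the recorded word lands in $\fbK$. This is Lemma \ref{it's irregular} read in reverse: the intermediate object remains a genuine sheaf, concentrated in a single cohomological degree up to the global shift, if and only if the mutated pair is irregular with the sign prescribed by \eqref{positive}, and these signs assemble coordinate by coordinate into the defining inequalities of $\fbK$. In effect one must prove the converse of the existence Theorem \ref{Fk}: an exceptional ACM bundle can be obtained only from irregular pairs carrying the prescribed sign, so that neither a regular pair nor a wrong-sign mutation ever intervenes — a wrong sign would move the mutated object out of degree zero and destroy the ACM/sheaf property. Establishing that the vanishings forcing irregularity are simultaneously necessary and sufficient, and that they coincide with \eqref{positive}, is the crux that closes the induction.
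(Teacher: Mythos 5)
Your proposal has two genuine gaps, both at steps you assert rather than prove. First, the inductive step begins by placing $\cE$ as the middle term $\cS_2$ of a three-term exceptional collection $\bB$ that extends to $\bC=(\cL(-F)[-1],\bB)$. The Kuleshov--Orlov-type completion theorem for del Pezzo surfaces gives that an exceptional bundle sits in \emph{some} full exceptional collection, but not in one whose first term is the fixed object $\cL(-F)[-1]$ and whose middle term is $\cE$; since you explicitly forgo transitivity of the braid-group action, nothing in your argument produces such a normalized collection. In fact, knowing that $\cE$ sits in a collection of the form $\bC_\vk$ is essentially the content of the theorem, so as written this step is close to circular. Second, you yourself identify the crux --- that the word recorded by the descent lies in $\fbK$, equivalently that at every stage the relevant pair is irregular with the sign prescribed by \eqref{positive} and that the mutated object stays a sheaf in the right degree (hence ACM, hence of strictly smaller rank) --- and then leave it as a task (``is the crux that closes the induction''). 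A proposal that defers its own crux has not closed the induction; note also that your rank-descent claim (``each mutation toward the base pair strictly lowers the rank'') is exactly what fails when a pair is regular or has the wrong sign, so it cannot be invoked before the crux is settled.

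For comparison, the paper's proof never completes $\cE$ into an exceptional collection and never mutates $\cE$ itself. It starts from the monad of Proposition \ref{prop-abcd} (kernel $\cK$, cokernel $\cC$, Ulrich part $\cF$), uses Lemma \ref{all rigid} to propagate rigidity of $\cE$ to $\cK$, $\cC$, $\cF$, and then applies the Kronecker-quiver classification of rigid objects (Lemma \ref{aspita}, part \eqref{somma}) to write each piece as $\fU_k^{a_k}\oplus\fU_{k+1}^{a_{k+1}}$; the duality $\cE\mapsto\cE^*\ts\omega_X$ reduces to $k<0$ (this part you capture correctly via $\fH_\vk$), the vanishing $\Ext^1_X(\cL(-F),\cO_X)=0$ plus indecomposability kills the stray summands, and the procedure is iterated inside successively mutated subcategories, with \emph{indecomposability of $\cE$} forcing irregularity of each new pair and Lemma \ref{it's irregular} certifying membership in $\fbK$ --- precisely the mechanism your sketch is missing. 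Two smaller points: your reduction ``rigid $\Rightarrow$ exceptional'' on a del Pezzo surface is a true theorem (Kuleshov--Orlov) but is not needed in the paper's argument, which works with rigidity directly through the quiver; and your base case cites Proposition \ref{prop-ulrich}, which only applies to $w=2$ (quartic scrolls), whereas here $w=3$ or $4$, so only Lemma \ref{aspita} does that job.
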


\begin{lem} \label{all rigid}
  Let $(\cP,\cN)$ be an exceptional pair of bundles on $X$, and let $p$
  and $q$ be integers. Let $\cE$ be a bundle on $X$ equipped with two maps
  $f,g$ as in the diagram: 
  \begin{equation}
    \label{monadfg}
      |\cN|^p \xr{f}   \cE \xr{g}   |\cP|^q, \qquad \mbox{with $g \circ
    f = 0$, $f$ injective, $g$ surjective}.
  \end{equation}
 Write $\cK = \ker(g)$,
  $\cC = \coker(f)$, and $\cF=\cK/|\cN|^p$.
  Finally, assume:
  \begin{align}
    \label{miparecivoglia} & \RHom_X(\cN,\cF) = 0, &&    \RHom_X(\cF,\cP)=0, \\
    \label{forseanchequesto} & \Ext^2_X(|\cP|,\cF)=0, && \Ext^2_X(\cF,|\cN|)=0.
  \end{align}

  Then, whenever $\cE$ is rigid, also $\cK$, $\cC$ and $\cF$ are rigid.
\end{lem}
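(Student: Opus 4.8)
The plan is to run the whole argument off the four short exact sequences encoded in the display of the monad \eqref{monadfg}:
\begin{align*}
&0 \to \cK \to \cE \to |\cP|^q \to 0, \\
&0 \to |\cN|^p \to \cE \to \cC \to 0, \\
&0 \to |\cN|^p \to \cK \to \cF \to 0, \\
&0 \to \cF \to \cC \to |\cP|^q \to 0,
\end{align*}
combined with the exceptional‑pair data available on the del Pezzo surface $X$: namely $\cP$ and $\cN$ are exceptional, $\RHom_X(\cN,\cP)=0$, and, since on $X$ the $\Ext$'s between exceptional sheaves are concentrated in a single degree $i\in\{0,1\}$ (cf. \S\ref{braid-definition}), one also has $\Ext^i_X(|\cP|,|\cN|)=0$ for $i\ge 2$. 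The idea is that these facts, together with the hypotheses \eqref{miparecivoglia}–\eqref{forseanchequesto}, kill every correction term that appears when comparing the self-$\Ext^1$ of $\cK$, $\cC$, $\cF$ with that of $\cE$.

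First I would record a handful of auxiliary vanishings. Applying $\Hom_X(-,\cP)$ to the third sequence and using $\RHom_X(\cF,\cP)=0$ and $\RHom_X(\cN,\cP)=0$ yields $\RHom_X(\cK,\cP)=0$; dually, $\Hom_X(\cN,-)$ applied to the fourth sequence, with $\RHom_X(\cN,\cF)=0$ and $\RHom_X(\cN,\cP)=0$, gives $\RHom_X(\cN,\cC)=0$. Next, chasing degree‑$2$ $\Ext$'s through the sequences and invoking exceptionality of $\cP,\cN$, the del Pezzo vanishing $\Ext^2_X(|\cP|,|\cN|)=0$, and the hypotheses $\Ext^2_X(|\cP|,\cF)=0$, $\Ext^2_X(\cF,|\cN|)=0$, I obtain in turn $\Ext^2_X(|\cP|,\cK)=0$, hence $\Ext^2_X(|\cP|,\cE)=0$; symmetrically $\Ext^2_X(\cC,|\cN|)=0$, hence $\Ext^2_X(\cE,|\cN|)=0$; and directly $\Ext^2_X(\cK,|\cN|)=0$.

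With these in hand each rigidity follows by the same two‑step mechanism. For $\cK$: applying $\Hom_X(\cK,-)$ to the first sequence and using $\RHom_X(\cK,\cP)=0$ identifies $\Ext^1_X(\cK,\cK)\simeq\Ext^1_X(\cK,\cE)$, while $\Hom_X(-,\cE)$ applied to the same sequence, with $\Ext^1_X(\cE,\cE)=0$, embeds $\Ext^1_X(\cK,\cE)\hookrightarrow\Ext^2_X(|\cP|^q,\cE)=0$; so $\cK$ is rigid. Symmetrically, $\Hom_X(-,\cC)$ on the second sequence with $\RHom_X(\cN,\cC)=0$ gives $\Ext^1_X(\cC,\cC)\simeq\Ext^1_X(\cE,\cC)\hookrightarrow\Ext^2_X(\cE,|\cN|^p)=0$, so $\cC$ is rigid. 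Finally, for $\cF$: $\Hom_X(-,\cF)$ on the third sequence with $\RHom_X(\cN,\cF)=0$ gives $\Ext^1_X(\cF,\cF)\simeq\Ext^1_X(\cK,\cF)$, and then $\Hom_X(\cK,-)$ on that sequence, now using the rigidity of $\cK$ just established, embeds $\Ext^1_X(\cK,\cF)\hookrightarrow\Ext^2_X(\cK,|\cN|^p)=0$; hence $\cF$ is rigid.

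The bookkeeping of long exact sequences is routine; the one genuinely load‑bearing input, and the step I would be most careful about, is the del Pezzo vanishing $\Ext^i_X(|\cP|,|\cN|)=0$ for $i\ge 2$. Without it the terms $\Ext^2_X(|\cP|,\cE)$ and $\Ext^2_X(\cE,|\cN|)$ need not vanish and the embeddings above would no longer force rigidity, so the argument really exploits that $\cP,\cN$ is an exceptional pair on a surface whose exceptional objects live in a single cohomological degree. I would also verify at the outset that $f$ injective and $g$ surjective do produce exactly these four sequences—in particular that $\cF\to\cC$ is injective with cokernel $|\cP|^q$—since everything rests on the commutative diagram of the monad.
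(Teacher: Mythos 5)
Your proposal is correct and follows essentially the same route as the paper's proof: the same display of the monad, the same auxiliary vanishings ($\RHom_X(\cK,\cP)=0$, $\Ext^2_X(|\cP|,\cE)=0$, $\Ext^2_X(\cK,|\cN|)=0$, all hinging on $\Ext^2_X(|\cP|,|\cN|)=0$ for an exceptional pair on a del Pezzo surface), and the same two-step comparison identifying $\Ext^1_X(\cK,\cK)\simeq\Ext^1_X(\cK,\cE)$ and reducing the rigidity of $\cF$ to that of $\cK$. The only difference is cosmetic: you spell out the symmetric argument for $\cC$, which the paper dismisses with ``a similar argument.''
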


\begin{proof}
Write the following exact commutative diagram as display of \eqref{monadfg}.
\begin{equation}\label{monad2}
   \xymatrix@-3ex{& 0 \ar[d] & 0 \ar[d]\\
    & |\cN|^p \ar@{=}[r] \ar[d] & |\cN|^p \ar[d]  \\
    0 \ar[r] & \cK \ar[r]  \ar[d]  & \cE \ar[r] \ar[d] & |\cP|^q \ar@{=}[d]\ar[r] & 0\\
    0 \ar[r] & \cF \ar[r] \ar[d] & \cC \ar[r] \ar[d] &  |\cP|^q \ar[r] & 0\\
    & 0 & 0 }
\end{equation}
  Applying $\Hom_X(-,\cF)$ to the leftmost column of \eqref{monad2} and
  using \eqref{miparecivoglia} we get:
  \[
  \Ext^i_X(\cF,\cF) \simeq \Ext^i_X(\cK,\cF),  \qquad \mbox {for all $i$}.
  \]
  By the same sequence, using $\Hom_X(-,|\cN|)$ and 
  \eqref{forseanchequesto}), we get  $\Ext^2_X(\cK,|\cN|)=0$ since
  $\cN$ is exceptional. Then, applying
  $\Hom_X(\cK,-)$ to the same sequence we get: 
  \[
  \Ext^1_X(\cK,\cK) \epi   \Ext^1_X(\cF,\cF).
  \]
  Then, to prove that $\cF$ is rigid, it suffices to prove that $\cK$
  is. To do it, we apply $\Hom_X(-,\cP)$ to the left column of
  \eqref{monad2}. Since $(\cP,\cN)$ is an exceptional pair, in view of
  \eqref{miparecivoglia} we get:
  $
  \RHom_X(\cK,\cP)=0.
  $
  Therefore, applying $\Hom_X(\cK,-)$ to  the central row of
  \eqref{monad2} we obtain:
  \begin{equation}
    \label{uguali KE}
  \Ext^i_X(\cK,\cK) \simeq \Ext^i_X(\cK,\cE),  \qquad \mbox {for all $i$}.    
  \end{equation}

  Note that, since $\Ext^2_X(|\cP|,|\cN|)=0$ (because $(\cP,\cN)$ is an
  exceptional pair, regular or not), from \eqref{forseanchequesto} we deduce, applying
  $\Hom_X(|\cP|,-)$ to \eqref{monad2}, the vanishing $\Ext^2_X(|\cP|,\cE)$.
  Therefore, applying $\Hom_X(-,\cE)$ to the central row of
  \eqref{monad2} we get a surjection:
  \[
  \Ext^1_X(\cE,\cE) \epi   \Ext^1_X(\cK,\cE).
  \]
  This, combined with \eqref{uguali KE}, says that $\cK$ (and
  therefore $\cF$) is rigid if
  $\cE$ is. A similar argument shows that the same happens to $\cC$.
\end{proof}

\begin{proof}[Proof of Theorem \ref{we classify}]
  Recall Proposition \ref{prop-abcd}, the notation $a,b,c,d$ and the construction of
  the monad associated with $\cE$, cf. \S \ref{monadology}. We have thus
  a kernel bundle $\cK$, a cokernel bundle $\cC$ and an
  Ulrich bundle $\cF$ associated with $\cE$. 

  We use now Lemma \ref{all rigid} with $\cP=\cL(-F)[-1]$, $p=c$,
  $\cN=\cO_X$ and $q=d$.
  Since $\cF$ is an extension of $\cL$ and $\cO_X(-F)$ and 
  $\bC_\emptyset$ is an exceptional collection we have the vanishing \eqref{miparecivoglia}.
  Also, for $\bC_\emptyset$ we have $\Ext^2_X(|\cS_i|,|\cS_j|)=0$ so
  \eqref{forseanchequesto} also holds.
  Therefore by Lemma \ref{all rigid} we see that $\cK$, $\cC$ and
  $\cF$ are all rigid.
  By Lemma \ref{aspita}, part \eqref{somma}, there must be 
   $k \ne 0$ and $a_k$, $a_{k+1}$ such that:
  \[
  \cF \simeq \fU_k^{a_k} \oplus \fU_{k+1}^{a_{k+1}},
  \]
  where $\fU_k$ are the exceptional Ulrich bundles, and
  $\fF_k=\fU_{k+1}$. Now, in Lemma \ref{it's irregular} we computed:
  \begin{align*}
  &\chi(\cL(-F),\fU_{k}) \ge 0, && \mbox{iff $k \le 0$}, \\
  &\chi(\fU_{k},\cO_X) \ge 0, && \mbox{iff $k \ge 1$}.    
  \end{align*}
  Up to possibly a shift by $1$, the exceptional pair $(\fU_k,\fU_{k+1})$ is
  completed to the full exceptional collection   
  $\bC_{k}$ by adding $\cL(-F)[-1]$ to the left and to the right $\cO_X$.
  So, by \cite[Proposition 5.3.5]{gorodentsev-kuleshov}, the previous display implies
  the vanishing:
  \begin{align}
  \label{justproved} & \Ext^1_X(\cL(-F),\cF) = 0, && \mbox{if $k < 0$}, \\
  \nonumber & \Ext^1_X(\cF,\cO_X) = 0, && \mbox{if $k > 0$}.
  \end{align}

  Let us consider the case $k < 0$. Before going further, note that analyzing this
  case is enough, indeed if $k>0$, then
  replacing $\cE$ with $\cE^* \ts \omega_X$ we get another rigid
  ACM bundle, whose Ulrich part will be this time in the range $k <
  0$.

  Assuming thus $k < 0$, by \eqref{justproved}, looking back at \eqref{diagram-monad}, we see that $\cC
  \simeq \cF \oplus \cL(-F)^d$. Then we note that, over $S(2,3)$ and
  $S(3,3)$, we have $\Ext^1_X(\cL(-F),\cO_X)=0$, so that $\cL(-F)^d$
  is a direct summand of $\cE$. So, since $\cE$ is indecomposable,
  we get $d=0$, or $\cE \simeq \cL(-F)$.
  In the second case, $\cE^*\ts \omega_X(H) \simeq \cO_X =
  \fF_{-1,1}$. Hence we assume $d=0$, so
  $\cE$ lies in the subcategory generated by the subcollection
  $\bB_{k}$ of $\bC_{k}$.
  Our goal is to show that $\cE$ can be constructed by the steps of
  Theorem \ref{Fk}, when we set $k_1=k$. This is clear for $\cF=0$, in which
  case $\cE \simeq \cO_X$. Also, it is clear for $c=0$,
  as we may assume $a_{k_1}=0$ and
  $a_{k_1+1}=1$ by indecomposability of $\cE$, so $\cE \simeq \fU_{k_1+1}
  \simeq \fF_{k_1}$. 

  To study the case $c\ne 0$, $\cF \ne 0$,
  first note that again indecomposability of $\cE$ forces
  $(\cS_2,\cS_3)$ to be an irregular pair, which gives back $k_1 < 0$
  and $\vk=(k_1,k_2) \in \fbK, \forall k_2 \ne 0$.
  Then, we use again Lemma \ref{all rigid}, this time with
  $\cN=\cO_X$, $p=c$, $\cP=\fU_{k_1}[-1]$ and $q=a_{k_1}$, so that the
  new bundle $\cF$ is $\fU_{k_1+1}^{a_{k_1+1}}$.
  We get a new bundle $\cK$ fitting into:
  \[
  0 \to \cO_X^c \to \cK \to \fU_{k_1+1}^{a_{k_1+1}} \to 0.
  \]
  Since $\bB_{k_1}=(\fU_{k_1}[-1],\fU_{k_1+1}[-1],\cO_X)$ is an exceptional sequence with
  vanishing $\Ext^2$ groups, Lemma \ref{all rigid} applies and shows that $\cK$ is
  rigid. So, again by  By Lemma \ref{aspita}, part
  \eqref{somma}, there are
  integers $k_2$, $a_{k_1,k_2}$ and $a_{k_1,k_2+1}$ such that, in the notation of Theorem \ref{Fk}:
  \[
  \cK \simeq \fF_{k_1,k_2}^{a_{k_1,k_2}} \oplus \fF_{k_1,k_2+1}^{a_{k_1,k_2+1}},
  \]

  Next, note that the bundle $\cE$ belongs to the subcategory generated
  by the new exceptional sequence
  $\bB_{k_1,k_2}$. Up to shifts, $\bB_{k_1,k_2}$ takes the form $(\fF_{k_1-1},\fF_{k_1,k_2},\fF_{k_1,k_2+1})$. If $a_{k_1}=0$ then
  $\cE \simeq \cK$ so since $\cE$ is indecomposable we see that $\cE$
  the form $\fF_{\vk}$, with $s=2$ in the notation of Theorem
  \ref{Fk}. 
  Otherwise, the new exceptional pair $(\cS_1,\cS_2)$ in $\bB_{k_1,k_2}$ must be
  irregular, so Lemma \ref{it's irregular}
  ensures that $\vk=(k_1,k_2,k_3)$ lies in $\fbK$ for any $k_3 \ne 0$. 
  Further, Lemma \ref{all rigid} can be used again, this time with
  $\cN=\fF_{k_1,k_2}$, $p=a_{k_1,k_2+1}$ and $\cP=\fF_{k_1-1}$, $q=a_{k_1}$ to see that the new
  bundle $\cC$ is
  rigid and hence a direct sum
  $\fF_{k_1,k_2,k_3}^{a_{k_1,k_2,k_3}} \oplus
  \fF_{k_1,k_2,k_3+1}^{a_{k_1,k_2,k_3+1}}$ for some $k_3 \ne 0$ and integers $a_{k_1,k_2,k_3}$, $a_{k_1,k_2,k_3+1}$.
  This process may be iterated, and eventually must stop
  because $\cE$ has finite rank. We finally get that the bundle $\cE$
  is of the form $\fF_{\vk}$ for some vector $\vk$ lying in $\fbK$.
\end{proof}

\noindent {\bf Acknowledgements}. We would like to thank the referee
for useful remarks.

\bibliographystyle{alpha}
\bibliography{bibliography}

\vfill

\end{document}